\documentclass[12pt,twoside,a4paper]{amsart}
\usepackage[margin=2.0cm]{geometry}
\usepackage{amsmath,amssymb,amsfonts,mathtools,mathrsfs}
\usepackage{enumerate,enumitem}
\usepackage{microtype}
\usepackage[colorlinks=true,linkcolor=blue,citecolor=red,urlcolor=cyan]{hyperref}
\usepackage[normalem]{ulem}
\usepackage{tikz}

\makeatletter
\@namedef{subjclassname@2020}{\textup{2020} Mathematics Subject Classification}
\makeatother

\numberwithin{equation}{section}

\theoremstyle{plain}
\newtheorem{theorem}{Theorem}[section]
\newtheorem{proposition}[theorem]{Proposition}
\newtheorem*{propA}{Proposition A}
\newtheorem{lemma}[theorem]{Lemma}
\newtheorem{corollary}[theorem]{Corollary}

\theoremstyle{definition}

\newtheorem{remark}[theorem]{Remark}

\newcommand{\R}{\mathbb R}
\newcommand{\Z}{\mathbb Z}
\newcommand{\N}{\mathbb N}
\newcommand{\T}{\mathbb T}

\newcommand{\wh}{\widehat}

\newcommand{\supp}{\operatorname{supp}}
\newcommand{\dist}{\operatorname{dist}}

\newcommand{\dd}{\,d}
\newcommand{\one}{\mathbf 1}

\newcommand{\norm}[1]{\left\|#1\right\|}

\newcommand{\Nmu}{\mathcal{N}_\mu}

\newcommand{\MHL}{M_{\mathrm{HL}}}
\newcommand{\cMSW}{\mathfrak c_d}
\newcommand{\Clog}{C_{\log}}

\title[Endpoint estimate for discrete spherical average]{On the endpoint estimate for discrete spherical average over sparse sequences}

\author{Sanghyuk Lee}
\address{Department of Mathematical Sciences and RIM, Seoul National University, Seoul 08826, Republic of Korea}
\email{shklee@snu.ac.kr}

\author{Ji Li}
\address{School of Mathematical and Physical Sciences, Macquarie University, NSW, 2109, Australia.}
\email{ji.li@mq.edu.au}

\author{Chong-wei Liang}
\address{Department of Mathematics, National Taiwan University, Taiwan.}
\email{d10221001@ntu.edu.tw}

\author{Chun-Yen Shen}
\address{Department of Mathematics, National Taiwan University, Taiwan.}
\email{cyshen@math.ntu.edu.tw}

\keywords{Discrete spherical average, endpoint estimate, sampling argument.}
\subjclass[2020]{Primary 42B25, 42B15; Secondary 11P55, 11L05.}

\begin{document}
\begin{abstract}
Let $d\geq5$. For a strictly increasing sequence $(\mu_k)$ of positive integers, set $\lambda_k=\mu_k!$ and consider the lacunary discrete spherical maximal operator $A_\star f:=\sup_k |A_{\lambda_k}f|$ associated with the discrete spherical averages
\[
A_\lambda f(x):=\frac1{s_\lambda}\sum_{\substack{n\in\Z^d,\ |n|^2=\lambda}} f(x-n),
\]
where $s_\lambda:=\#\{n\in\Z^d:|n|^2=\lambda\}$.
Kesler, Lacey and Mena proved that $A_\star$ is bounded on $\ell^p(\Z^d)$ for every $p>1$ if
$\log\mu_k/\log k\longrightarrow\infty$, and asked about its endpoint behavior at $\ell\log\ell$.

We resolve this endpoint question by characterizing all factorial sequences for which the $\ell\log\ell$ estimate holds. Define
\[
  C_{\log}=\sup_{N\geq2}\frac{\#\left\{k\geq 1:\mu_k\leq N\right\}}{1+\log N}.
\]
We prove that the $\ell\log\ell$ endpoint estimate holds if and only if $C_{\log}<\infty$. More precisely, if $C_{\log}<\infty$, then for every $\alpha>0$ and every finitely supported $f:\Z^d\to\mathbb C$,
\begin{align*}
 \#\{x\in\Z^d:A_\star f(x)>\alpha\}
\leq C_d(1+C_{\log})\sum_x
 \frac{|f(x)|}{\alpha}
 \left(1+\log^+\frac{|f(x)|}{\alpha}\right),
\end{align*}
where $C_d$ depends only on $d$. Conversely, if the above inequality holds with a finite constant $C_0$ in place of $C_d(1+C_{\log})$, then $C_{\log}\leq C_d(1+C_0)$.
\end{abstract}
\maketitle

\section{Introduction and main results}
Let $d\ge5$. For an integer $\lambda\in\N$, we define the normalized discrete spherical average
\[
A_{\lambda}f(x):=\frac{1}{s_\lambda}\sum_{\substack{n\in\Z^d\\|n|^2=\lambda}} f(x-n),
\]
 where $s_\lambda:=\#\{n\in\Z^d:\ |n|^2=\lambda\}$ denotes the number of lattice points on the sphere of radius $\lambda^{1/2}$. For $d\geq5$, one has
\[
 s_\lambda\simeq_d\lambda^{d/2-1},
 \qquad \lambda\in\N,
\]
see \cite{MR0097357} and \cite[p.~190 and (3.1)]{MR1888798}.
For comparison with the normalization used in the major-arc approximation,
let
\[
 \overline A_\lambda f(x)
 :=\lambda^{1-d/2}
 \sum_{\substack{n\in\Z^d\\|n|^2=\lambda}}f(x-n),
 \qquad
 \rho_\lambda:=\frac{\lambda^{d/2-1}}{s_\lambda}.
\]
Then
\[
 A_\lambda=\rho_\lambda\overline A_\lambda,
 \qquad
 R_d^{\rm norm}
 :=\sup_{\lambda\in\N}
 \max\{\rho_\lambda,\rho_\lambda^{-1}\}<\infty.
\]
Only this uniform comparability is used.

We consider the maximal operator associated with a lacunary sequence of radii $\{\lambda^{1/2}_k\}$: 
\[
A_\star f(x):=\sup_k |A_{\lambda_k}f(x)|.
\]
In the \emph{typical highly composite} regime, where the radii are given by factorials $\lambda_k=(2^k)!$, it has been established that $A_\star$ is bounded on $\ell^p(\Z^d)$ for the full range $1<p<\infty$ \cite{MR4470175}. Furthermore, in the general \emph{highly composite} regime, where the radii are given by factorials $\lambda_k=\mu_k!$ with $\mu_k\uparrow\infty$, Kesler, Lacey and Mena \cite{MR3982252} proved that if
\begin{equation}\label{eq:KLM}
  \lim_{k\to\infty}\frac{\log\mu_k}{\log k}=\infty,
\end{equation}
then $A_\star$ is bounded on $\ell^p(\Z^d)$ for every $1<p<\infty$. Repeated values of $\mu_k$ do not affect the maximal operator, so throughout
we delete repetitions and assume without loss of generality that $(\mu_k)$ is a
\emph{strictly increasing} sequence of positive integers.
This stands in contrast to the case of general lacunary sequences, where boundedness is restricted to a smaller range of $p$ due to arithmetic obstructions \cite[Theorem 1.1]{MR3982252}, see also \cite{MR3996030}.

A natural open question, raised explicitly by Kesler, Lacey and Mena \cite[Remark~(3), Sec.~2]{MR3982252}, concerns the endpoint behavior of this operator near $\ell^1$. Specifically, does $A_\star$ satisfy a weak-type estimate at the Orlicz scale $\ell\log\ell$?

We answer this question by giving a necessary and sufficient condition for the $\ell\log\ell$ endpoint estimate along factorial sequences. The condition \eqref{eq:KLM} alone is insufficient: the endpoint estimate holds when the factorial parameters satisfy logarithmic counting.
For a strictly increasing sequence $(\mu_k)$, define the inverse counting function
\begin{equation}\label{eq:Nmu}
  \mathcal{N}_\mu(N)=\#\{k\geq1:\mu_k\leq N\},\qquad N\geq2
\end{equation}
and the logarithmic counting constant
\begin{equation}\label{eq:Cmu}
  C_{\log}=\sup_{N\geq2}\frac{\Nmu(N)}{1+\log N}.
\end{equation}
All logarithms are natural unless another base is indicated. For all $t>0$, write
\[
 \mathcal{L}(t)=1+\log^+t\quad\text{and}\quad \mathcal{L}(0)=1.
\] 
Here, $\log^+t=\max\{0,\log t\}$.

\subsection{Sufficient condition for the endpoint estimate}
Our first main result exhibits that the finiteness of (\ref{eq:Cmu}) is a sufficient condition for the endpoint estimate.

\begin{theorem}\label{thm:full-mu}
Let $d\geq5$ and let $\lambda_k=\mu_k!$, where $(\mu_k)$ is a strictly increasing sequence of positive integers. Suppose that the constant $\Clog$ is finite. Then, for every $\alpha>0$ and every finitely supported $f:\Z^d\to\mathbb C$,
\begin{equation}\label{eq:main-full-mu}
  \#\{x\in\Z^d:A_\star f(x)>\alpha\}
 \leq C_d(1+C_{\log})\sum_x
 \frac{|f(x)|}{\alpha}
 \mathcal{L}\left(\frac{|f(x)|}{\alpha}\right).
\end{equation}
The finite constant $C_d$ is independent of $f$ and $\alpha$.
\end{theorem}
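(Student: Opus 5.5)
We plan to decompose each average via the Magyar--Stein--Wainger major/minor arc analysis, as used by Kesler--Lacey--Mena, and then treat the pieces separately. For $\lambda=\mu!$, write $\overline A_\lambda=\sum_{s\ge1}\sum_{a\in(\Z/s\Z)^\times} C^{a/s}_\lambda+E_\lambda$. Here $C^{a/s}_\lambda$ is the major-arc term whose multiplier is $e(-\lambda a/s)G(a/s)\,\psi_s(\xi-\ell/s)\,\widetilde{d\sigma_\lambda}(\xi-\ell/s)$, with Gauss sums satisfying $|G(a/s)|\lesssim s^{-d/2}$, and $E_\lambda$ is the minor-arc error. The arithmetic point is that $e(-\lambda a/s)=1$ whenever $s\le\mu$, since then $s\mid\mu!$. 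So for the $k$-th radius, all moduli $s\le\mu_k$ carry no oscillating phase. Since $A_\lambda=\rho_\lambda\overline A_\lambda$ with $\rho_\lambda$ uniformly comparable to $1$, it suffices to bound $\sup_k|\overline A_{\lambda_k}f|$ for $f\ge0$.

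We split the maximal function into three pieces.

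\textbf{Minor-arc and error terms.} These satisfy $\|E_\lambda\|_{\ell^2\to\ell^2}\lesssim\lambda^{-\delta}$ for some $\delta=\delta_d>0$, and trivially $\|E_\lambda\|_{\ell^1\to\ell^1}\lesssim1$. Interpolation gives $\ell^p$ decay for $p$ near $1$. Since $\lambda_k=\mu_k!$ grows superexponentially, $\sum_k\|E_{\lambda_k}\|_{\ell^p}$ is bounded with an $\ell^p$ norm $O((p-1)^{-1})$. Feeding this into a standard extrapolation in the Yano style, or more directly the bound $\|E_{\lambda}f\|_{\ell^1}\lesssim\|f\|_1$ summed over the $\lesssim\log$ many relevant scales, yields the $\ell\log\ell$ bound.

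\textbf{Moduli $s>\mu_k$ (or $s$ large).} These come with the factor $s^{1-d/2}$ after summing over $a$. Their contributions are controlled by an $\ell^p$ bound that decays polynomially in $s$, via the Magyar--Stein--Wainger sampling lemma $\|\sup|C^{a/s}|\|_{p}\lesssim s^{-d/2+1+\epsilon}$, which holds only for $p$ near $2$. Near $p=1$ we instead use an $\ell^1$-type bound with loss $s^{O(1)}$ and interpolate. A dyadic-in-$s$ sum then converges for $d\ge5$.

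\textbf{Moduli $s\le\mu_k$ (the main term).} Here the phase is trivial, so the term equals $\sum_{s\le\mu_k}\sum_a G(a/s)\,T_s\bigl(\widetilde{d\sigma_{\lambda_k}}\bigr)$. We organize it by writing the maximal operator over $k$ as a sum over dyadic blocks $2^j\le s<2^{j+1}$. Each block involves only the $k$ with $\mu_k\ge2^j$, but the supremum is genuinely over all such $k$. By sampling (Magyar--Stein--Wainger) together with a weak-$(1,1)$ bound for the lacunary continuous spherical maximal function of Seeger--Tao--Wright type, a block at level $j$ has weak $\ell^1$ norm about $2^{j(2-d/2)}\cdot j$ or better. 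The critical term comes from the number of scales: the weak-type bound for a supremum over $K$ radii with no cancellation costs $\log K$, and the $\ell\log\ell$ loss arises from summing $\log$-many scales.

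The hypothesis enters at this point. We use the $\ell\log\ell$ Calder\'on--Zygmund decomposition at height $\alpha$ and handle the bad part $b=\sum_Q b_Q$ by hand. For the cube $Q$, only radii $\lambda_k\gtrsim\ell(Q)^2$ matter, and averages with $\lambda_k\ll\ell(Q)^2$ are supported near $Q$. For each cube, the number of indices $k$ with $\mu_k!\le\ell(Q)^{2}\cdot(|b_Q|\text{-dependent factor})$ is bounded by $\Nmu(\log(\cdot))\le C_{\log}(1+\log\log\cdot)$, or more precisely by $C_{\log}(1+\log N)$ with $N\sim\log$ of the relevant height ratio. This is what converts the count of nontrivial scales into a factor $\mathcal{L}(|f|/\alpha)$.

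The main obstacle is precisely this counting step: showing that the set of $k$ for which $A_{\lambda_k}b_Q$ exceeds $\alpha$ off $Q^*$ has cardinality $\lesssim(1+C_{\log})\mathcal{L}(\|b_Q\|_\infty/\alpha)$. We would try a pointwise size bound $A_{\lambda}|b_Q|\lesssim\lambda^{1-d/2}\|b_Q\|_1$ combined with $s_\lambda\simeq\lambda^{d/2-1}$. This would give a threshold $\lambda^{d/2-1}\lesssim\|b_Q\|_1/\alpha$, so that $\log\mu_k!\lesssim\log(\|b_Q\|_1/\alpha)$. Since $\log\mu!\sim\mu\log\mu$, this threshold bounds $\mu_k$, and $\Nmu$ evaluated at that threshold is at most $C_{\log}(1+\log(\|b_Q\|_1/\alpha))$. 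Finally, we would sum the resulting level sets, each of measure $\le|Q|\cdot\#\{\text{scales}\}$ after using $\|b_Q\|_1\lesssim\alpha|Q|\mathcal{L}$.
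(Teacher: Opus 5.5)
There is a genuine gap, and it is at the step you flag as the main obstacle: the counting mechanism you propose cannot work. The size bound $A_\lambda|b_Q|\le s_\lambda^{-1}\|b_Q\|_1$ controls a single cube, but $A_\lambda b=\sum_Q A_\lambda b_Q$. Discrete spheres have no smoothness, so there is no H\"ormander-type cancellation off $Q^*$, and individually negligible contributions from many cubes add up arithmetically. Your threshold also involves $\|b_Q\|_1/\alpha\sim|Q|\,\mathcal L$ rather than the height. Worse, the scheme proves too much. The number of scales it charges is at most $\Nmu(C\log X)\le C\log X$ for \emph{every} strictly increasing $(\mu_k)$, since $\Nmu(N)\le N$. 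So $C_{\log}$ plays no real role, and the argument would give $\ell\log\ell$ for $\mu_k=k$, contradicting Theorem~\ref{thm:klm-counterexample}.

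Concretely, take $f=\one_{p\Z^d\cap Q_M}$ at height $\alpha=1/(4p^{d-1})$. The Calder\'on--Zygmund cubes have side $\approx p^{(d-1)/d}$ and $\|b_Q\|_1\approx1$, so your threshold only admits $\mu_k\lesssim\log p$. Yet by Corollary~\ref{cor:common-level}, every $k$ with $\lambda_k\ge p^{8d}$ and $\mu_k<p$ has $A_{\lambda_k}f>\alpha$ on a set of density $\gtrsim p^{-1}$. These sets are disjoint for distinct residues of $\lambda_k$ mod $p$, and each value is assembled from about $s_{\lambda_k}p^{1-d}$ different cubes. The scales that must be paid for therefore extend to $\mu_k$ of the size of a power of the height. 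This is why the correct count is $\Nmu$ evaluated at a polynomial in $|f|/\alpha$.

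Your pieces (a)--(c) also lack control near $\ell^1$.
\begin{itemize}
\item The bound $\|E_\lambda\|_{\ell^1\to\ell^1}\lesssim1$ is not trivial. The $q$-th piece of $M_\lambda$ has kernel $\mathfrak R_q(|m|^2-\lambda)$ times a smoothed sphere, and its $\ell^1$ norm does not decay in $q$.
\item Interpolating an $\ell^1$ loss $s^{O(1)}$ against the $\ell^2$ gain $s^{1-d/2}$ gives growth in $s$ as $p\to1$. Your $s$-sums therefore diverge exactly where you need them.
\item With trivial phase, a dyadic block of moduli has kernel $\sum_s\mathfrak R_s(|m|^2)$ times smoothed spheres, again with no $\ell^1$ decay. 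The claimed weak-$\ell^1$ norm $2^{j(2-d/2)}j$ is therefore unfounded. In addition, no weak-$(1,1)$ bound for lacunary spherical maximal functions is available, and Propositions~\ref{SAM1}--\ref{sampling} transfer strong $L^p$ bounds only.
\end{itemize}

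The paper avoids all of this. With $\alpha=1$ it splits $f$ into flat shells $f_j$ ($2^{j-1}<f\le2^j$) and takes, per shell, $N_j=2^{4(j+1)}$ and $p_j=1+1/(j+2)$.
\begin{itemize}
\item Scales with $\mu_k<N_j^3$ are bounded trivially at cost $\Nmu(2^{12(j+1)})\le C_{\log}(1+12(j+1)\log 2)$. This is the only place $C_{\log}$ enters.
\item For $\mu_k\ge N_j^3$, every $q\le N_j$ divides $\lambda_k$ and $\lambda_{k+1}/\lambda_k>N_j^3$. All moduli $q\mid Q=N_j!$ are grouped into $B_\lambda=T_\lambda\circ U_{N_j}$. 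Because $\sum_{q\mid Q}\mathfrak R_q(t)=Q\one_{Q\mid t}$, $U_N$ has kernel $Q\one_{Q\mid|m|^2}$ times a bump and is uniformly $\ell^1$-bounded (Lemma~\ref{lem:U-bound}).
\item $T_\lambda$ is controlled via sampling by the quantitative superlacunary bound $p'(1+((p-1)L)^{-2})$ of Theorem~\ref{thm:superlac}. This is $O(j)$ at $p_j$ since $(p_j-1)L_{N_j}\ge3$.
\item Everything else is bounded only in $\ell^2$, by $N_j^{-(d-4)/2}$ (Lemma~\ref{lem:tail-error}). This suffices because $\|f_j\|_2^2\le2^j\|f_j\|_1$.
\item The shells are then summed by the excess inequality with $a_j=2^{-j-4}$ (Lemma~\ref{prop:shell}).
\end{itemize}
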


\begin{remark}
The logarithmic counting condition $C_{\log}<\infty$ is stronger
than \eqref{eq:KLM}. Indeed, since $(\mu_k)$ is strictly increasing,
we have $\mathcal N_\mu(\mu_k)=k$. Hence, $k\leq C_{\log}(1+\log\mu_k)$
and therefore, as $k\to\infty$,
\[
  \frac{\log\mu_k}{\log k}
  \geq
  \frac{k/C_{\log}-1}{\log k}
  \longrightarrow\infty.
\]
The converse is false. For example, let $\mu_k=k+\left\lceil \exp({k^{1/2}})\right\rceil$. Then $(\mu_k)$ is strictly increasing and $\log\mu_k\simeq{k}^{1/2}$, so \eqref{eq:KLM} holds. On the other hand, strict monotonicity gives $\Nmu(\mu_k)=k$.
Therefore, as $k\to\infty$,
\[
 \frac{\Nmu(\mu_k)}{1+\log\mu_k}
 =
 \frac{k}{1+\log\mu_k}
 \longrightarrow\infty,
\]
and consequently $C_{\log}=\infty$.
\end{remark}

\begin{corollary}\label{cor:model}
Suppose $\lambda_k=(2^k)!$, $k\geq1$, and $d\geq5$. Then, for every $\alpha>0$ and every finitely supported $f:\Z^d\to\mathbb C$,
\begin{equation}\label{eq:model-endpoint}
 \#\{x:A_\star f(x)>\alpha\}
 \leq C_d\sum_x\frac{|f(x)|}{\alpha}
 \mathcal{L}\left(\frac{|f(x)|}{\alpha}\right).
\end{equation}
The finite constant $C_d$ is independent of $f$ and $\alpha$.
\end{corollary}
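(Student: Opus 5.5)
The corollary follows from Theorem~\ref{thm:full-mu} once we check that $\mu_k=2^k$ has finite logarithmic counting constant. With $C_{\log}<\infty$ established, \eqref{eq:main-full-mu} gives \eqref{eq:model-endpoint} with constant $C_d(1+C_{\log})$. This depends only on $d$, since $C_{\log}$ here is an absolute number.

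First, the sequence $\mu_k=2^k$, $k\ge1$, is a strictly increasing sequence of positive integers, so it satisfies the hypotheses of Theorem~\ref{thm:full-mu}.

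Next, we bound the counting function. For $N\ge2$,
\[
\Nmu(N)=\#\{k\ge1:2^k\le N\}=\lfloor \log_2 N\rfloor\le \frac{\log N}{\log 2}.
\]
Hence
\[
\frac{\Nmu(N)}{1+\log N}\le \frac{1}{\log 2}\cdot\frac{\log N}{1+\log N}\le\frac1{\log 2}.
\]
Taking the supremum over $N\ge2$ in \eqref{eq:Cmu} gives $C_{\log}\le 1/\log 2$.

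Finally, we apply Theorem~\ref{thm:full-mu} with $\lambda_k=(2^k)!$. This yields \eqref{eq:model-endpoint} with constant $C_d(1+1/\log 2)$, which we rename $C_d$. There is no real obstacle: all the analytic difficulty lies in Theorem~\ref{thm:full-mu}, and the only check needed is the elementary counting bound above.
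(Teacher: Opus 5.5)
Your proposal is correct and is the same argument as the paper's. You compute $\Nmu(N)=\lfloor\log_2 N\rfloor$ for $\mu_k=2^k$, deduce $\Clog\le 1/\log 2$, and then invoke Theorem~\ref{thm:full-mu} to get the estimate with constant $C_d(1+1/\log 2)$.
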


We obtain Theorem~\ref{thm:full-mu} from an estimate which does not require
\eqref{eq:KLM} or logarithmic counting.

\begin{theorem}\label{thm:counting}
Let $d\geq5$ and let $\lambda_k=\mu_k!$, where $(\mu_k)$ is a strictly increasing sequence of positive integers. For every $\alpha>0$ and every finitely supported $f:\Z^d\to\mathbb C$,
\begin{equation}\label{eq:counting-endpoint}
 \#\{x:A_\star f(x)>\alpha\}
 \leq C_d\sum_x\frac{|f(x)|}{\alpha}
 \left[
 \mathcal{L}\!\left(\frac{|f(x)|}{\alpha}\right)
 +\Nmu\!\left(2^{24\mathcal{L}(|f(x)|/\alpha)}\right)
 \right].
\end{equation}
The constant $C_d$ is independent of the sequence, $f$, and $\alpha$.
\end{theorem}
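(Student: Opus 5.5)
Normalize $\alpha=1$ and split $f$ by level sets: write $f=\sum_{j\ge0} f_j$ with $f_0=f\one_{\{|f|\le 2\}}$ and $f_j=f\one_{\{2^{j}<|f|\le 2^{j+1}\}}$ for $j\ge1$, so $\mathcal L(|f(x)|)\simeq 1+j$ on $\supp f_j$. Since $A_\star$ is sublinear, $\{A_\star f>1\}\subset\bigcup_j\{A_\star f_j>c\,2^{-\epsilon j}\}$ for a small fixed summable profile (e.g. $c 2^{-\epsilon j}$ or $c(1+j)^{-2}$). It then suffices to show, for each $j$ and each scale threshold, a bound of the form
\[
\#\{A_\star f_j>\beta\}\lesssim_d \beta^{-1}\|f_j\|_1\bigl[(1+j)+\Nmu(2^{Cj})\bigr]\cdot(\text{acceptable loss}),
\]
with $\beta$ comparable to $2^{-\epsilon j}$ but with losses absorbed. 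The sum over $j$ then gives \eqref{eq:counting-endpoint}, because $\|f_j\|_1(1+j)\simeq\sum_x|f_j(x)|\mathcal L(|f_j(x)|)$ and $\Nmu(2^{Cj})$ matches $\Nmu(2^{24\mathcal L})$ on $\supp f_j$. The essential point is that $f_j$ is essentially an indicator function at height $2^j$, so restricted weak type estimates are available.

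For fixed $j$, split the radii into $\mu_k\le 2^{Cj}$ and $\mu_k>2^{Cj}$.

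\emph{Small radii.} There are $\Nmu(2^{Cj})$ of them. Each single average $A_\lambda$ is an $\ell^1$ contraction, so a union bound gives
\[
\#\{\sup_{\mu_k\le 2^{Cj}}|A_{\lambda_k}f_j|>\beta\}\le \Nmu(2^{Cj})\,\beta^{-1}\|f_j\|_1 .
\]
This produces exactly the $\Nmu$ term, provided $\beta$ is not too small relative to the profile. Choosing $\beta\simeq 1$ for the worst part and summing geometric pieces should handle this.

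\emph{Large radii.} Here $\lambda_k=\mu_k!$ is divisible by every $q\le 2^{Cj}$. Use the Magyar--Stein--Wainger / Kesler--Lacey--Mena decomposition $\overline A_\lambda=\sum_{q}\sum_{a}e(-\lambda a/q)\,\mathcal M_{\lambda}^{a/q}+E_\lambda$ (absorbing $\rho_\lambda$ via $R_d^{\rm norm}$). Split the major-arc sum into denominators $q\le 2^{Cj}$ and $q>2^{Cj}$.
\begin{itemize}
\item For $q\le 2^{Cj}$ and $q\mid\lambda$, the phases $e(-\lambda a/q)=1$, so the sum is independent of $k$. It is a fixed multiplier (Ramanujan-sum type Gauss sums) composed with a continuous lacunary spherical multiplier. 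The sampling principle together with the continuous lacunary spherical maximal function's weak $(1,1)$ (Seeger--Tao--Wright type) should give a weak $(1,1)$ bound with a constant uniform in the cutoff. I would cite the Ionescu/Magyar-type $\ell^1$ sampling and the $q^{-d/2+1}$ Gauss sum decay, which is summable for $d\ge5$.
\item For $q>2^{Cj}$ and for the error terms $E_\lambda$, use $\ell^2$ bounds with decay $q^{-(d/2-1)+\epsilon}$, i.e. $2^{-cCj}$. Chebyshev on $\ell^2$ then gives $\beta^{-2}\|f_j\|_2^2\lesssim \beta^{-2}2^{j}\|f_j\|_1$, and the $2^j$ loss is beaten by $2^{-cCj}$ once $C$ is large (this is where $24$ comes from). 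Summing in $k$ requires the error-term $\ell^2$ maximal bound, which decays in $\lambda_k$ and is square-summable over lacunary radii.
\end{itemize}

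The main obstacle is the $k$-uniform weak $(1,1)$ bound for the low-denominator major-arc maximal operator. This step needs an $\ell^1$ transference of the continuous lacunary spherical maximal weak-$(1,1)$ bound, uniformly over the denominator cutoff $2^{Cj}$ with a loss at most $O(1+j)$. I would first try to prove the losses are $O(j)$ by bounding $\sum_{q\le Q}$ of periodic-multiplier $\ell^1$ norms. This $O(j)$ loss is what produces the $\mathcal L$ term.
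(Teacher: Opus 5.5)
There is a genuine gap, in the step where the $\ell\log\ell$ bound is actually won. Your reduction $\{A_\star f>1\}\subset\bigcup_j\{A_\star f_j>\beta_j\}$ forces $\sum_j\beta_j\le1$. Any per-shell bound $\#\{A_\star f_j>\beta_j\}\lesssim c_j\beta_j^{-1}\|f_j\|_{\ell^1}$ with $c_j\gtrsim1$ then costs a factor $\beta_j^{-1}$ on the $j$-th shell. Since $\sum_j\beta_j<\infty$ is incompatible with $\beta_j^{-1}\lesssim 1+j$, these bounds can never sum to \eqref{eq:counting-endpoint}. With $\beta_j=c2^{-\epsilon j}$ you lose $2^{\epsilon j}$, which is an $\ell^{1+\epsilon}$-type bound. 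With $\beta_j=c(1+j)^{-2}$ you lose $(1+j)^2$.

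Two of your pieces survive this. The $\ell^2$ error pieces are fine, because $2^{-cCj}$ pays for $\beta_j^{-2}$. The small radii can be handled by a single $\ell^1$ Chebyshev applied to $\sum_j\sum_{\mu_k\le 2^{Cj}}A_{\lambda_k}|f_j|$. The main term cannot be rescued this way, even by a weak $(1,1)$ bound with constant $O(1)$.

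The paper's key device is additive rather than a union. It uses $\mathbf 1_{\{\sum_j(u_j+v_j)>1/2\}}\le 4\sum_j[(u_j-a_j)_++(v_j-a_j)_+]$ with $\sum_j a_j=1/8$ (Proposition~\ref{lem:excess}), together with $\sum_x(u-a)_+\le a^{1-p}\|u\|_{\ell^p}^p$ (Proposition~\ref{lem:positive}). With $p_j=1+1/(j+2)$ and $a_j=2^{-j-4}$ one gets $a_j^{1-p_j}\le4$ and $\|f_j\|_{\ell^{p_j}}^{p_j}\le 2\|f_j\|_{\ell^1}$. So the exponentially small thresholds cost nothing, and the $\mathcal L$ term is $(p_j')^{p_j}\lesssim j+1$. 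What this consumes is a \emph{strong} $\ell^{p_j}$ bound with constant $O(p_j')$ for the main term, not a weak $(1,1)$ bound.

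Your proposed route to the main term also breaks down, for three reasons.
\begin{itemize}
\item No weak $(1,1)$ bound for the lacunary spherical maximal function is known; the Seeger--Tao--Wright endpoint is $L\log\log L\to L^{1,\infty}$.
\item The sampling principle (Propositions~\ref{SAM1} and~\ref{sampling}) transfers strong $L^p$ bounds only.
\item Summing denominators in $\ell^1$ loses badly. The single-fraction multiplier $\sum_\ell G(a,\ell,q)\widetilde\psi_q(\xi-\ell/q)$ has a kernel of modulus $|\mathcal F^{-1}[\widetilde\psi_q](m)|$, whose $\ell^1$ norm is $\simeq1$; the $q^{-d/2}$ Gauss-sum decay is an $\ell^2$ fact. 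So $\sum_{q\le Q}\sum_a$ costs about $Q^2=2^{2Cj}$, not $O(j)$. The $q$-dependent cutoffs also prevent a clean factorization.
\end{itemize}

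The paper instead proceeds as follows.
\begin{itemize}
\item It replaces all cutoffs by $\widetilde\psi_{2Q}$ with $Q=N!$ and includes every $q\mid Q$. The resulting discrepancies are $\ell^2$-small on the tail $\mu_k\ge N^3$ (Lemma~\ref{lem:tail-error}).
\item The model then factors as $B_\lambda=T_\lambda\circ U_N$. The complete sum over $a'\bmod Q$ gives $U_N$ a kernel of modulus $\cMSW Q\,|\mathcal F^{-1}[\widetilde\psi_Q](m)|\,\mathbf 1_{\{Q\mid |m|^2\}}$, which is bounded in $\ell^1$ uniformly in $N$ (Lemma~\ref{lem:U-bound}).
\item For $T_\lambda$ it proves $\|\mathcal M_{\mathcal R}\|_{L^p\to L^p}\lesssim p'(1+((p-1)L)^{-2})$ for $2^L$-superlacunary radii (Theorem~\ref{thm:superlac}). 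This combines a Calder\'on--Zygmund weak $(1,1)$ bound with loss $1+b$ for each randomized smooth piece $K_{r,b}$, interpolation against the $L^2$ decay $2^{-bL}$, and summation in $b$. The bound is then transferred to $\Z^d$ by sampling.
\end{itemize}
With $L_{N_j}\simeq j$ and $p_j-1\simeq 1/j$, this constant is $O(j)$, which is exactly what the excess argument needs.
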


The exponent $24$ is a fixed choice in the proof. We make no claim
that this numerical constant is best possible. 

\subsection{Necessary condition for the endpoint estimate and the distribution of the lattice points}
Our second main results shows that the finiteness of (\ref{eq:Cmu}) is a necessary condition for the endpoint estimate.

\begin{theorem}\label{thm:klm-counterexample}
Let $d\geq5$ and let $\lambda_k=\mu_k!$, where $(\mu_k)$ is a strictly increasing sequence of positive integers. Suppose that, for some $C_0<\infty$,
\begin{align}\label{eq:necessary-endpoint}
 \#\{x\in\Z^d:A_\star f(x)>\alpha\}
 \leq C_0\sum_x\frac{|f(x)|}{\alpha}
\mathcal{L}\left(\frac{|f(x)|}{\alpha}\right)
\end{align}
for every $\alpha>0$ and every finitely supported $f:\Z^d\to\mathbb C$. Then
\[
 \Nmu(N)\leq C_{d,C_0}(1+\log N),
 \qquad N\geq2.
\]
In particular, $C_{\log}\leq C_d(1+C_0)<\infty$.
\end{theorem}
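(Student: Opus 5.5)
The plan is to test \eqref{eq:necessary-endpoint} against functions for which many of the averages $A_{\lambda_k}$ each produce a large level set. These level sets should live on essentially disjoint regions. The left-hand side then grows like $\Nmu(N)$ times a fixed quantity $\|f\|_1/\alpha$, while the right-hand side is only $\|f\|_1/\alpha$ times a logarithm. The basic geometric mechanism is the following shell argument. Take $f=\one_{Q}$ with $Q$ a set of diameter $\sim R$, and consider a radius with $\lambda_k\gg R^2$. By the Hardy--Littlewood asymptotics in $d\ge5$, together with $s_\lambda\simeq\lambda^{d/2-1}$ and the comparability $R_d^{\rm norm}<\infty$, the function $A_{\lambda_k}f$ is comparable to $\beta_k:=|Q|\,\lambda_k^{-(d-1)/2}R^{-1}$ on a shell of radius $\lambda_k^{1/2}$ and width $\sim R$. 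That shell has cardinality $\sim \lambda_k^{(d-1)/2}R\simeq |Q|/\beta_k$. Lacunarity makes these shells disjoint. Hence, at any level $\alpha\le c\min_k\beta_k$, the left side of \eqref{eq:necessary-endpoint} is at least $\#\{k\}\cdot|Q|/\alpha$ up to constants.

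Purely Euclidean shells only give $\#\{k:\lambda_k\lesssim \alpha^{-2/(d-1)}\}\lesssim C_0\mathcal L(1/\alpha)$. Since $\log\lambda_k=\log\mu_k!\approx\mu_k\log\mu_k$, this yields only $\Nmu(N)\lesssim N\log N$. So the main step is to replace the Euclidean decay $\lambda^{-(d-1)/2}$ by an \emph{arithmetic} gain that depends only on $\mu_k$ and not on $\lambda_k$. For this I would take $f=\one_{q\Z^d\cap B_R}$ with $q=\prod_{p\le N}p$, or a modest power of it, so that $q^2\mid\lambda_k$ whenever $\mu_k\ge 2N$.

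For such $k$, the local factors in the Hardy--Littlewood singular series at primes $p\mid q$ force the sphere $\{|n|^2=\lambda_k\}$ to meet each residue class $r\bmod q$ with $|r|^2\equiv0 \pmod {q^2}$ with frequency larger than $q^{-d}$ by a factor $\gtrsim q^{2}$. This comes from the major arc term at denominators dividing $q$. The minor arcs and higher denominators would be controlled by the standard $d\ge5$ error bounds, with $R\to\infty$ chosen depending on $N$. Consequently $A_{\lambda_k}f\gtrsim \alpha_N$ on a union of residue-class subsets of the shells. These subsets have total size $\gtrsim\|f\|_1/\alpha_N$, with $\alpha_N$ independent of $k$ and satisfying $\log(1/\alpha_N)\lesssim \log q\lesssim N$.

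To bring the logarithm down to $\log N$ I would not use all $k$ with $\mu_k\le N$ at a single scale. Instead I would partition the indices dyadically, $2^j<\mu_k\le 2^{j+1}$, and take $q_j$ to be the product of the primes in $(2^{j-1},2^j]$. This makes the gain, and hence $\alpha$, comparable across the block, with $\mathcal L(1/\alpha)$ of size $O(\log 2^j)$ after normalizing $f$ by the block's own density. Applying \eqref{eq:necessary-endpoint} blockwise would give $\#\{k:2^j<\mu_k\le2^{j+1}\}\lesssim C_0$. Summing over $j\le\log_2 N$ would then give $\Nmu(N)\le C_{d,C_0}(1+\log N)$, with the dependence $C_d(1+C_0)$ read off from the linear dependence on $C_0$.

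The main obstacle is the arithmetic step. One must show that, uniformly in $k$ within a block, the residue-class gain genuinely produces level sets of size $\gtrsim\|f\|_1/\alpha$ at a common level $\alpha$, with $\log(1/\alpha)$ bounded independently of $\lambda_k$. One must also show that the level sets for different $k$ remain essentially disjoint, which the lacunarity of the radii should give. If the blockwise bound fails, the fallback is a weaker $O(\log 2^j)$ count per block; this would still be consistent with the target provided the blocks are taken at double-exponential scales.
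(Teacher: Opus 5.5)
Your proposal has a genuine gap at the step you yourself call the main obstacle. The problem is structural: your construction cannot produce level sets that are both at a common level and pairwise disjoint.

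To make the level independent of $\lambda_k$ you must work in the periodic regime, with test function $\one_{q\Z^d}$ on a box much larger than $\lambda_k^{1/2}$. There, for interior points, $A_{\lambda_k}f(x)=\nu_{\lambda_k,q}(x \bmod q)$, which is supported on the quadric $\{a\in\Z_q^d:|a|^2\equiv\lambda_k\pmod q\}$. You chose $q$ (or $q_j$) to divide every $\lambda_k$ in the block, so all these quadrics are the same set $\{a:|a|^2\equiv0\pmod q\}$. That set has size $\approx q^{d-1}$, comparable to $\|f\|_1/\alpha$ per period. Hence the union of the level sets is no larger than a single one, and the hypothesis says nothing about the number of indices.

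Lacunarity cannot restore disjointness. It separates level sets only in the shell regime $R\ll\lambda_k^{1/2}$, where the level is your $\beta_k$ (times the arithmetic density). That level decays with $\lambda_k$, so there is no common level. This already breaks your Euclidean heuristic: at $\alpha\le c\min_k\beta_k$ the $k$-th shell contributes only $\sim|Q|/\beta_k$. Since the $\beta_k$ decay super-geometrically, the total is $\sim|Q|/\min_k\beta_k$, not $\#\{k\}\cdot|Q|/\alpha$.

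Independently, the logarithm is miscounted. We have $\log q_j=\sum_{2^{j-1}<p\le 2^j}\log p\asymp 2^j$, so $\mathcal L(1/\alpha)\asymp d\,2^j$. Even granting disjointness, you would only get $\#\{k:2^j<\mu_k\le2^{j+1}\}\lesssim C_0 2^j$, no better than the trivial bound. Moreover, a level with $\mathcal L(1/\alpha)=O(j)$ would give $O(C_0 j)$ per block, not $O(C_0)$.

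The missing idea is to use arithmetic in the opposite direction: take one prime of polynomial size that does \emph{not} divide the $\lambda_j$ and separates them. The paper's argument runs as follows.

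First, pick $p\in(N^4,2N^4)$ not dividing $\prod_{1\le i<j\le R}(\mu_j!-\mu_i!)$, where $R=\Nmu(N)$. This is possible because the product has $O(N^3\log N)$ prime factors while the interval contains $\gg N^4/\log N$ primes. Then $\lambda_1,\dots,\lambda_R$ are nonzero and pairwise distinct mod $p$, so the $\nu_{\lambda_j,p}$ live on pairwise disjoint quadrics.

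Second, testing with $\one_{p\Z^d\cap Q_M}$ and letting $M\to\infty$ transfers the hypothesis to
$\#\{a\in\Z_p^d:\max_j\nu_{\lambda_j,p}(a)>t\}\lesssim_d C_0t^{-1}\mathcal L(1/t)$.

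Third, the major-arc asymptotic for $\nu_{\lambda,p}$ applies; there the contributions of $p^k$ with $k\ge2$ cancel by Hensel's lemma because $p\nmid\lambda$. It gives $\|\nu_{\lambda_j,p}\|_\infty\lesssim_d p^{1-d}$ once $\lambda_j\ge p^{8d}$. Hence each such $\nu_{\lambda_j,p}$ has a level set of size $\gtrsim_d p^{d-1}$ at the common level $p^{1-d}/4$.

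Finally, summing over the disjoint quadrics and comparing gives $\#\{j\le R:\lambda_j\ge p^{8d}\}\lesssim_d C_0\log p\lesssim C_0\log N$. The remaining indices, those with $\mu_j<64d(1+\log N)$, number at most $64d(1+\log N)$.

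Distinct residues supply the disjointness, and the single prime with $\log p\asymp\log N$ supplies the common level. Your construction supplies neither.
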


\subsubsection{The main obstacle of the necessary condition}
 The necessity proof relies on the asymptotic behavior of the distribution of lattice points on spheres modulo a prime $p$. To motivate the proof of Theorem \ref{thm:klm-counterexample}, we present a pioneer proposition, which is weaker than Theorem \ref{thm:klm-counterexample}. We will show this proposition in Section \ref{sec:counterexample}.
\begin{propA}
Let $d\geq5$ and let $\lambda_k=\mu_k!$, where $(\mu_k)$ is a strictly increasing sequence of positive integers. Suppose that, for some $C_0<\infty$,
\begin{align}
 \#\{x\in\Z^d:A_\star f(x)>\alpha\}
 \leq C_0\sum_x\frac{|f(x)|}{\alpha}
\mathcal{L}\left(\frac{|f(x)|}{\alpha}\right)
\end{align}
for every $\alpha>0$ and every finitely supported $f:\Z^d\to\mathbb C$. Then
\begin{align}\label{weakupper}
 \Nmu(N)\leq C_{d,C_0}(1+\log N)^2,
 \qquad N\geq2.
\end{align}
\end{propA}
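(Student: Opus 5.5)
The plan is to argue by contradiction: I would test \eqref{eq:necessary-endpoint} on arithmetic test functions built from the residue-class behaviour of the factorial radii. The point is that $\lambda_k=\mu_k!$ is divisible by every prime, and every prime power $p^a$ with $a\le v_p(\mu_k!)$. Here $v_p(\mu_k!)\approx \mu_k/(p-1)$, so in particular $v_2(\lambda_k)\simeq \mu_k$.

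\textbf{Step 1: local density.} For a modulus $q$ with $q\mid\lambda$, I would show that the lattice points on $\{|n|^2=\lambda\}$ equidistribute among residue classes $r\bmod q$ with $|r|^2\equiv\lambda\equiv 0 \pmod q$. I would obtain this from the circle method / major-arc approximation already used for $\overline A_\lambda$, together with $d\ge5$, and compare with $A_\lambda$ using $R_d^{\rm norm}$. Such a class carries a proportion $\simeq q^{1-d}$ of the sphere rather than the naive $q^{-d}$, uniformly in $q$ as long as $\lambda$ is large compared to $q$. Factorial growth makes this automatic.

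\textbf{Step 2: test function and scale-by-scale lower bound.} I would take $f=\one_{F}$ with $F=\{x\in[-L,L]^d: x\equiv 0 \pmod{Q}\}$. Here $Q$ is a product of prime powers dividing $\lambda_k$ for all $k$ with $N_0\le\mu_k\le N$, and $L\gg\lambda_{k}^{1/2}$ for the relevant $k$. For each such $k$, the union over $q\mid Q$ of the classes $x\bmod q$ with $|x|^2\equiv0$ gives $A_{\lambda_k}f(x)\gtrsim q^{1-d}\,(q/Q)^{d}$ on a set of density $\simeq q^{-1}$. To make different $k$ produce \emph{disjoint} contributions, I would use a dyadic family of moduli $q_j$ indexed by the $k$'s. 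A convenient choice is $q_j=2^{m_j}$ with $m_j\simeq v_2(\lambda_{k_j})$, which are distinct since $(\mu_k)$ is strictly increasing. I would then superpose $f=\sum_j c_j\one_{F_j}$ with geometrically chosen weights $c_j$, so that for a fixed level $\alpha$ each $k_j$ contributes a set of $x$ of size $\gtrsim\alpha^{-1}\|c_j\one_{F_j}\|_{\ell^1}$ in which $A_{\lambda_{k_j}}f>\alpha$. Summing gives a lower bound for the left-hand side of order $\#\{k:\mu_k\le N\}$ times $\alpha^{-1}\|f\|_{\ell^1}$, up to constants.

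\textbf{Step 3: comparison.} On the right-hand side, $f/\alpha$ takes values up to about $2^{O(\log N)}$, so $\mathcal L(|f|/\alpha)\lesssim 1+\log N$. The layer structure loses one more factor $1+\log N$, because the weights $c_j$ range over $O(\log N)$ dyadic scales and only a fraction $\gtrsim(1+\log N)^{-1}$ of the mass is effective at each level. Comparing the two sides then yields $\Nmu(N)\le C_{d,C_0}(1+\log N)^2$, which is \eqref{weakupper}. The sharp bound in Theorem~\ref{thm:klm-counterexample} would need to remove the second logarithm, presumably by a finer count of the lattice points mod $p$.

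\textbf{Main obstacle.} I expect the hardest part to be Step 1 combined with disjointness. One needs a lower bound for the proportion of sphere points in a residue class $\bmod\, q$ that is uniform in $q$ and in the class, with an error term small enough that the sets attached to different $k$ genuinely separate. For this I would first try the Kloosterman refinement for the minor arcs and an explicit evaluation of the Gauss sums for $q=2^m$ (or $q=p$ prime) on the major arcs, where the singular series factorises into computable local densities.
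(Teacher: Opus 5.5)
\textbf{There is a genuine gap.} Your mechanism for separating the radii cannot work, and the bookkeeping in Step~3 contradicts Step~2.

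\emph{Why moduli dividing $\lambda_k$ fail.} If $q\mid\lambda_k$ for all relevant $k$, every such $\lambda_k$ is $\equiv 0\pmod q$. So the sets $\{x:|x|^2\equiv\lambda_k\pmod q\}$, on which $A_{\lambda_k}\one_{q\Z^d}$ lives, coincide for all of them. Your fix with $q_j=2^{m_j}$, $m_j\simeq v_2(\lambda_{k_j})\simeq\mu_{k_j}$, does not help:
\begin{itemize}
\item It only produces nested sets, since $\lambda_k\equiv0\pmod{2^{m_j}}$ for every $k$ with $\mu_k\ge\mu_{k_j}$.
\item More decisively, these moduli are as large as $2^{N}$. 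The heights at which $k_j$ is detected are $\alpha\approx c_jq_j^{1-d}$, so $\mathcal L(|f|/\alpha)\approx(d-1)m_j\log2\approx\mu_{k_j}$. That is of size $N$, not $1+\log N$, so your claim $f/\alpha\le2^{O(\log N)}$ contradicts your own choice of $q_j$.
\end{itemize}
Modulo $2^m$, only the $O(m)$ radii with $\mu_k\lesssim m$ have distinct residues; all others share the residue $0$. The best this route gives is $\#\{k:\mu_k\lesssim m\}\lesssim m^2$, which is never better than the trivial bound $\Nmu(N)\le N$. The claim that the geometric weights cost ``one more logarithm'' is a heuristic, not an argument. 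Step~1 is also false as stated: if $q^2\mid\lambda$, the class $0\bmod q$ carries the proportion $s_{\lambda/q^2}/s_\lambda\simeq q^{2-d}$ of the sphere, not $q^{1-d}$.

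\emph{The missing idea.} Use a modulus \emph{coprime} to the $\lambda_k$, of size polynomial in $N$, in which the $\lambda_k$ with $\mu_k\le N$ are pairwise distinct. The paper takes a prime $p\in(N^4,2N^4)$ with $p\nmid D_N=\prod_{i<j}|\mu_j!-\mu_i!|$. Such a prime exists because $\log D_N\le N^3\log N$, so $D_N$ has $O(N^3\log N)$ prime factors, while there are $\gg N^4/\log N$ primes in the interval. Since $p>N\ge\mu_j$, the residues $\lambda_j\bmod p$ are also nonzero. The argument then runs as follows.
\begin{itemize}
\item Transfer the hypothesis to $\Z_p^d$ using $f_M=\one_{p\Z^d\cap Q_M}$ (your large-box construction handles this part correctly). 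This gives $\#\{a\in\Z_p^d:\max_j\nu_{\lambda_j,p}(a)>t\}\lesssim_d C_0t^{-1}\mathcal L(1/t)$ for $0<t\le1$.
\item Each $\nu_{\lambda_j,p}$ is a probability measure supported in $\{|a|^2\equiv\lambda_j\pmod p\}$, and these sets are pairwise disjoint. Hence $h=\max_j\nu_{\lambda_j,p}=\sum_j\nu_{\lambda_j,p}$ has total mass $R=\Nmu(N)$.
\item The layer-cake formula, with the trivial bound $p^d$ for $t\le p^{-d}$, yields $R\le1+C_0\int_{p^{-d}}^1t^{-1}(1+\log(1/t))\,dt\lesssim_d1+C_0(\log p)^2\lesssim(1+\log N)^2$. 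This integral is where the square comes from.
\end{itemize}
No equidistribution is needed for Proposition~A. The paper uses the mod-$p$ asymptotic, in the regime $p\nmid\lambda$ rather than $q\mid\lambda$, only to remove the second logarithm.
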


The main obstacle in this proposition is the bound of the $L^1$-norm of the function of the distribution of lattice points on spheres modulo a prime $p$, $A^{\Z^d_p}_{\lambda_j}\mathbf{1}_{p\Z^d}$ (see (\ref{dismodp}) for the definition). To improve the upper bound in (\ref{weakupper}), we need a “refined estimate” for the level sets of $A^{\Z^d_p}_{\lambda_j}\mathbf{1}_{p\Z^d}$ for each $j$. It turns out that, in order to prove this refined estimate, it suffices to establish an \emph{asymptotic formula} for the distribution of lattice points on spheres within a fixed residue class modulo $p$.

 For a prime $p$, we define
\[
 \nu_{\lambda,p}(a)
 =\frac1{s_\lambda}
 \#\{n\in\Z^d:|n|^2=\lambda,\ n\equiv a\pmod p\},
 \qquad a\in\Z_p^d.
\]
\begin{theorem}\label{thm:asym}
Let $d\geq5$ and $p$ be an odd prime with $p\nmid\lambda$. If $\lambda\ge p^8$,
then, uniformly for $a\in\Z_p^d$,
\begin{equation}\label{nec:modular-asymptotic}
 \nu_{\lambda,p}(a)
 =\rho_\lambda\cMSW p^{1-d}\one_{\{|a|^2\equiv\lambda\pmod p\}}
 \sum_{\substack{r\geq 1\\(p,r)=1}}\mathfrak{QR}_r(\lambda)+O_d(\lambda^{-1/8}),
\end{equation}
where $\mathfrak{QR}_r(\lambda)
 =\sum_{a\in\Z_r^\times}e_r(-a\lambda)G(a,0,r)$ and $G(a,\ell,r)= r^{-d} \sum_{n\in \Z_r^d} e_r(a|n|^2 + n \cdot \ell)$ is the normalized Gauss sum.
\end{theorem}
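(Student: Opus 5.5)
We will prove Theorem~\ref{thm:asym} with the circle method, applied to the congruence-restricted representation count. Write $n=a+pm$ with $m\in\Z^d$. Then
\[
s_\lambda\nu_{\lambda,p}(a)=\#\{m\in\Z^d:|a+pm|^2=\lambda\},
\]
and this is a quadratic form problem in $d\ge5$ variables with a shifted lattice. The plan is to write this count as an integral over $\T$ of an exponential sum in $m$ (using the identity $\int_0^1 e(\theta(|a+pm|^2-\lambda))\,d\theta$), smoothly or sharply truncated to $|m|\lesssim\lambda^{1/2}/p$. We then run the Hardy--Littlewood decomposition into major arcs $|\theta-b/q|\le 1/(qQ)$ with $q\le Q$, where $Q\approx(\lambda/p^2)^{1/2}$ is the natural height for the rescaled problem.

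On a major arc at $b/q$ we evaluate the exponential sum by splitting $m$ into residue classes modulo $q$. This gives a complete sum
\[
q^{-d}\sum_{m\in\Z_q^d}e_q\bigl(b|a+pm|^2\bigr)
\]
times the Fourier transform of the continuous sphere measure. We factor $q=p^jr$ with $(r,p)=1$.
\begin{itemize}
\item For the $r$-part, $m\mapsto a+pm$ is a bijection mod $r$, so the sum is exactly $G(b',0,r)$ with an adjusted numerator.
\item For the $p$-part, $|a+pm|^2\equiv|a|^2\pmod p$, and higher $p$-powers are handled by Hensel-type expansion.
\end{itemize}
Summing over $b$ and $j$, the $p$-power contributions collapse, because $p\nmid\lambda$, to the local density $p^{1-d}\one_{\{|a|^2\equiv\lambda\pmod p\}}$. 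Concretely, the number of solutions of $|n|^2\equiv\lambda\pmod{p^j}$ with $n\equiv a\pmod p$ is $p^{(j-1)(d-1)}$ when $|a|^2\equiv\lambda$, since the gradient $2a\not\equiv0$ allows Hensel lifting. The $r$-part produces $\sum_{(r,p)=1}\mathfrak{QR}_r(\lambda)$. The singular integral yields $\cMSW\lambda^{d/2-1}p^{-d}\cdot p$, up to normalization. Dividing by $s_\lambda=\rho_\lambda^{-1}\lambda^{d/2-1}$ gives the main term of \eqref{nec:modular-asymptotic}.

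The remaining work is error control, all measured relative to $\lambda^{d/2-1}$:
\begin{enumerate}
\item The major-arc approximation error.
\item Tails of the singular series: $|G(a,0,r)|\lesssim r^{-d/2}$, so $|\mathfrak{QR}_r|\lesssim r^{1-d/2}$ and the tail beyond $Q$ is $O(Q^{2-d/2})$.
\item The minor arcs, via a Weyl/Kloosterman-type bound for the shifted theta sum. This saves roughly $(\lambda/p^2)^{(d/2-2)/2}$-type powers, which is at least $\lambda^{-c}$ with $c$ close to $1/8$ when $d\ge5$.
\end{enumerate}
The hypothesis $\lambda\ge p^8$ ensures $\lambda/p^2\ge\lambda^{3/4}$. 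The effective scale is therefore a fixed power of $\lambda$, and every error becomes $O_d(\lambda^{-1/8})$ uniformly in $a$.

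The main obstacle is the minor-arc estimate, which must hold uniformly in the shift $a$ and the modulus $p$. The approach we will try first is to apply the standard Kloosterman refinement (as in Magyar--Stein--Wainger) to the scaled form $p^2|m+\bar a|^2$, where $\bar a=a/p$ is a rational shift. Since the shift only introduces linear phases $e_q(\ell\cdot m)$, it is absorbed into $G(b,\ell,q)$, and the bound $|G(b,\ell,q)|\lesssim q^{-d/2}$ is uniform in $\ell$. If the exponent $1/8$ turns out to be too tight for $d=5$, we will enlarge the assumed power of $p$. The statement permits this only through the constant $8$, so the arcs should be tuned, with $Q=\lambda^{1/2}/p$ and the dyadic $q$-decomposition, to reach exactly $\lambda^{-1/8}$.
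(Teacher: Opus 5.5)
\textbf{Verdict: the proposal has a genuine gap.} The gap is the error term, and it has to be supplied by a different reduction.

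\textbf{What is correct.} Your main-term computation is sound and uses the same arithmetic as the paper: the CRT split $q=p^jr$, the $r$-part giving $\mathfrak{QR}_r(\lambda)$, and Hensel lifting at $p$ using $2a\not\equiv0$. The placement of the factor $p$ is garbled, though. The singular integral is $p^{-d}\cMSW\lambda^{d/2-1}$ (substitute $y=a+px$), and the $p$-adic density is $p\,\one_{\{|a|^2\equiv\lambda \bmod p\}}$. Their product gives $p^{1-d}$.

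\textbf{The gap.} The error term is the real content of the theorem, namely uniformity in $p$ with only $\lambda\ge p^8$. You assert it rather than derive it, and the reason you give for uniformity is false. On the arc at $b/q$, the complete sums of your form are
\[
q^{-d}\sum_{m\in\Z_q^d}e_q\bigl(b|a+pm|^2+\ell\cdot m\bigr)=e_q(b|a|^2)\,G(bp^2,\,2bpa+\ell,\,q).
\]
Here the quadratic coefficient $bp^2$ is not a unit when $p\mid q$. For $q=p$ the sum equals $e_p(b|a|^2)\one_{\{p\mid\ell\}}$, and for $q=p^2$ it equals $e_{p^2}(b|a|^2)\one_{\{\ell\equiv-2bpa \bmod p^2\}}$. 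Both have modulus $1$, not $q^{-d/2}$. So the bound $|G(b,\ell,q)|\lesssim q^{-d/2}$ is unavailable on these arcs, and you would need an extra counting argument in $\ell$ that your sketch does not contain.

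\textbf{The height is also too small.} With $Q\approx\sqrt\lambda/p$, consider the arcs with $p\nmid q$. The nonzero Poisson frequencies that still see the sphere run over $|\ell|\lesssim p\sqrt\lambda/Q\approx p^2$, rather than $p$. The standard bookkeeping (Gauss-sum bound, stationary phase, trivial sum over $b$) then leaves an error in $\nu_{\lambda,p}(a)$ of order
\[
\lambda^{1/2}Q^{1-d/2}=\lambda^{1-d/4}p^{d/2-1}.
\]
For $d=5$ and $p\approx\lambda^{1/8}$ this is only $\lambda^{-1/16}$. Getting below $\lambda^{-1/8}$ this way needs a genuine Sali\'e/Kloosterman saving in $b$ over $b$-dependent Farey arcs, and none of that is set up. Your fallback of raising the power of $p$ would change the statement.

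\textbf{The missing idea (the paper's route).} Detect the congruence with characters of $\Z_p^d$ instead of running the circle method on the coset. This gives
\[
\widehat\nu_{\lambda,p}(b)=\rho_\lambda\, a_\lambda(b/p),
\]
where $a_\lambda$ is the multiplier of the unrestricted average. The MSW bound $|a_\lambda-m_\lambda|\lesssim\lambda^{1-d/4}$ holds uniformly in the frequency. It involves only Gauss sums with unit coefficient at the natural height $\sqrt\lambda$, so no power of $p$ is lost there.

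\textbf{Where $p$ enters.} The prime appears only when the major-arc multiplier, truncated at $q\le T=\lfloor\lambda^{1/4}\rfloor$, is evaluated at $b/p$. The truncation tail is $O(T^{-(d-4)/2})=O(\lambda^{-1/8})$. At $b/p$, the center $\ell/q=b/p$ (present only if $p\mid q$ or $b=0$) contributes exactly its Gauss sum. Every other center satisfies $|b/p-\ell/q|\ge1/(pq)$, which gives an error $O\bigl((pT/\sqrt\lambda)^{(d-1)/2}\bigr)$. This is small precisely because $p\le\lambda^{1/8}$.

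\textbf{Finishing.} Fourier inversion of the resulting $h_{\lambda,p}$ is then exact: it uses Ramanujan sums and $\mathfrak D_{p^k}=0$ for $k\ge2$ by Hensel. The error is uniform in $b$, so it passes through the average over $b\in\Z_p^d$ at no cost.
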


We make an additional remark on this method of proof used in this paper. For $\gamma>0$, define
$$C_{\log,\gamma}=\sup_{N\geq2}\frac{\Nmu(N)}{(1+\log N)^\gamma}.$$ The argument can be extended to yield the $\ell(\log \ell)^\gamma$ endpoint estimate. Let $d\geq5$ and $\gamma>0$. For$\lambda_k=\nu_k!$, the estimate
\begin{align*}
     \#\{x\in\Z^d:A_\star f(x)>\alpha\}
 \leq K\sum_{x}
 \frac{|f(x)|}{\alpha}\mathcal{L}\left(\frac{|f(x)|}{\alpha}\right)^\gamma.
\end{align*}
holds with a finite constant $K$ if and only if $C_{\log,\gamma}<\infty$.

\subsection{Organization}
The paper is organized as follows. In Section \ref{sec:prelim}, we review the circle method and the sampling theorem of Magyar, Stein, and Wainger.  In Section \ref{sec:3}, we prove the quantitative superlacunary continuous estimate, transfer it to the discrete average, and combine it with the major--minor arc decomposition to obtain an effective MSW decomposition. In Section \ref{sec:endpoint}, we prove Theorems~\ref{thm:counting} and \ref{thm:full-mu}, and Corollary~\ref{cor:model}. In Section \ref{sec:counterexample}, we establish the asymptotic formula of the distribution of lattice points on spheres modulo a prime $p$ and the necessity of logarithmic counting in Theorem~\ref{thm:klm-counterexample}.

\medskip

\section{Preliminaries}\label{sec:prelim}
\subsection{Notation}
Throughout the paper, $e(t)$ denotes the phase $e^{2\pi i t}$. For $q\ge1$, we write $e_q(t)=e(t/q)$, $\Z_q=\Z/q\Z$ and $\Z^\times_q=(\Z/q\Z)^\times$ for the group of units modulo $q$. The size of this group is given by Euler's totient function $\varphi(q)$.
For a suitable function $f:\mathbb Z^d\to\mathbb C$,  $\wh f$ denotes the Fourier transform of $f$ on $\Z^d$; while for a suitable function $F:\mathbb R^d\to\mathbb C$, $\widetilde F$ denotes the Fourier transform of $F$ on $\mathbb R^d$. Besides, the symbols $*$ and $\bigstar$  represent the convolution on $\Z^d$ and $\mathbb R^d$, respectively.

Unless a dependence is displayed explicitly, implicit constants may depend only on $d$ and on the fixed cutoff functions.  In particular, they are independent of $p,L,N,Q$, every finite index set $J$ and its cardinality, and the sequence; dependence through quantities such as $C_{\log}$ is always displayed.

For a countable family $(T_k)_{k\in I}$ and a finite set $J\subset I$, write
\[
 T_Jf:=\sup_{k\in J}|T_kf|.
\]
Maximal estimates for such families are first proved with constants independent of $J$ and $\#J$.  If $J_m\uparrow I$, then $T_{J_m}f\uparrow\sup_{k\in I}|T_kf|$; strong norms pass to the full supremum by monotone convergence, and distribution estimates by continuity from below of counting measure.  For the probability spherical averages we also use
\[
 A_{\star,J}f:=\sup_{k\in J}|A_{\lambda_k}f|
 \leq A_{\star,J}(|f|),
\]
which follows from the positivity of each $A_{\lambda_k}$.

\subsection{The circle method and sampling argument}
\subsubsection{The circle method: major and minor arcs}
For each $\lambda\in\mathbb N$, let $M_\lambda$ be the convolution operator acting on functions on $\Z^d$, which can be written by
\begin{align}\label{majoprarc}
M_\lambda:=\cMSW\sum_{q=1}^{\infty} \sum_{a \in (\Z/q\Z)^\times} e_q(-\lambda a) M^{a/q}_\lambda,
\end{align}
where the sum is taken over all reduced fractions $a/q$, with $0<a/q\leq 1$ and $\cMSW$ is the constant $=\pi^{d/2}/\Gamma(d/2)$. Each $M^{a/q}_\lambda$ is the convolution operator whose multiplier $m^{a/q}_\lambda$ is
\begin{align}\label{majorarc2}
\sum_{\ell \in (\Z/q\Z)^d} G(a,\ell,q) \, \widetilde{\psi}_q\left(\xi - \frac{\ell}{q}\right) \widetilde{d\sigma_\lambda}\left(\xi - \frac{\ell}{q}\right),
\end{align}
in which $G(a,\ell,q)$ is the normalized Gauss sum
\begin{align}\label{normalizedGauss}
q^{-d} \sum_{n\in (\Z/q\Z)^d} e_q(a|n|^2 + n \cdot \ell);\end{align}
and $\widetilde{\psi}_q(\cdot) = \widetilde{\psi}(q\cdot)$, with $\widetilde{\psi} \in C_c^\infty(\mathbb R^d)$ a smooth cut-off function supported near the origin, satisfying $\mathbf{1}_{\{|\xi| \le 1/4\}} \le \widetilde{\psi}(\xi) \le \mathbf{1}_{\{|\xi| \le 1/2\}}$,
and $\widetilde{d\sigma_\lambda}$ is the Fourier transform of the normalized surface measure on the sphere of radius $\lambda^{1/2}$.

According to
\cite[equations~(3.1)--(3.4), pp.~198--199]{MR1888798}, for each
$\lambda\in\N$ we have
\begin{equation}\label{eq:MSW_decomp}
 \overline A_\lambda=M_\lambda+E_\lambda.
\end{equation}
Here $E_\lambda=\overline A_\lambda-M_\lambda$ is the minor-arc error.
Proposition~4.1 of \cite{MR1888798} is stated in the radius variable $R$
with decay $R^{2-d/2}$. Since $\lambda=R^2$, it gives the following
$\ell^2$ estimate:
\begin{proposition}\label{minor}
    For any $\Lambda>0$ and $d\geq5$,
    \begin{align*}
        \left\|\sup_{\Lambda\leq\lambda\leq2\Lambda}\left|E_\lambda f\right|\right\|_{\ell^2}\lesssim_d \Lambda^{1-d/4}\left\|f\right\|_{\ell^2}.
    \end{align*}
\end{proposition}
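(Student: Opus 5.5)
This statement is Proposition~\ref{minor}. It is a change of variables in Proposition~4.1 of \cite{MR1888798}, so I will not reprove the minor-arc bound. I will only check that their statement, written in the radius variable, translates into the form displayed here.

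\textbf{Step 1: matching the operators.} Write $R=\lambda^{1/2}$. Magyar--Stein--Wainger normalize the discrete sphere by $R^{2-d}=\lambda^{1-d/2}$, which is exactly $\overline A_\lambda$. Their approximant is the major-arc operator \eqref{majoprarc}--\eqref{majorarc2}, with the same cutoff $\widetilde\psi_q$ and Gauss sums $G(a,\ell,q)$. So their error term for radius $R$ is our $E_\lambda=\overline A_\lambda-M_\lambda$, by \eqref{eq:MSW_decomp}. I would first confirm that the normalizing constant $\cMSW$ and the sign conventions in $e_q(-\lambda a)$ agree with their (3.1)--(3.4). A mismatch would only change $M_\lambda$ by a bounded factor, which their proof absorbs.

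\textbf{Step 2: the dyadic maximal bound.} Their Proposition~4.1 states that for dyadic $R_0$,
\[
\Bigl\|\sup_{R_0\le R\le 2R_0,\ R^2\in\N}|E_{R^2}f|\Bigr\|_{\ell^2}\lesssim_d R_0^{2-d/2}\|f\|_{\ell^2}.
\]
Given $\Lambda$, the range $\Lambda\le\lambda\le2\Lambda$ corresponds to $R\in[\Lambda^{1/2},(2\Lambda)^{1/2}]$. This interval lies inside a single dyadic block $[R_0,2R_0]$ with $R_0=\Lambda^{1/2}$, since $\sqrt2<2$. Hence the supremum over $\lambda$ is dominated by their supremum, and
\[
R_0^{2-d/2}=\Lambda^{1-d/4}.
\]
If their statement is phrased only for $R_0=2^j$, I would instead cover the range by at most two dyadic blocks and use the triangle inequality for the supremum. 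The comparable scales change the bound only by a constant depending on $d$.

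\textbf{Step 3: small $\Lambda$.} When $\Lambda<1$, the range contains at most the value $\lambda=1$ (and none if $2\Lambda<1$). For $\lambda=1$, both $\overline A_1$ and $M_1$ are bounded on $\ell^2$. For $M_1$ this holds because its multiplier is bounded by $\sum_q\varphi(q)\sup_\ell|G(a,\ell,q)|\lesssim\sum_q q^{1-d/2}<\infty$ for $d\ge5$, using the disjoint supports of the translated cutoffs for fixed $q$. So the estimate is trivial there.

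\textbf{Main point to check.} The only real issue is that the cited result states exactly a maximal $\ell^2$ bound with decay $R^{2-d/2}$ over a dyadic range, with the same decomposition as \eqref{eq:MSW_decomp}. That proof rests on Kloosterman refinement and Gauss-sum bounds, which I would take as given rather than reproduce.
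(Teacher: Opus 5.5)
Your proposal is correct and matches the paper, which likewise obtains Proposition~\ref{minor} by quoting Proposition~4.1 of \cite{MR1888798} and substituting $\lambda=R^2$, so that $R_0^{2-d/2}=\Lambda^{1-d/4}$; your dyadic-block and small-$\Lambda$ remarks are harmless extra bookkeeping. One aside is wrong: a mismatch in $\cMSW$ or in the phases would not be ``absorbed,'' because it would leave a non-decaying multiple of the major-arc operator inside $E_\lambda$, so the conventions genuinely have to agree, as they do here.
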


We will also need the $\ell^2$ estimate for the maximal function  $M^{a/q}_\star f:=\sup_{\lambda>0}|M^{a/q}_\lambda f|$:
\begin{proposition}\label{othermaxi}
For all $d\geq3$,
\begin{equation}\label{eq:single-denom-l2}
  \left\|M^{a/q}_\star f\right\|_{\ell^2}
  \lesssim_d q^{-d/2}\|f\|_{\ell^2}.
\end{equation}
\end{proposition}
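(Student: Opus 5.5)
The plan is to prove Proposition~\ref{othermaxi} by reducing it, on the Fourier side, to the continuous spherical maximal theorem of Stein, with the $q^{-d/2}$ gain supplied by the Gauss sum. This follows the Magyar--Stein--Wainger sampling principle.

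First, I would record the Gauss sum bound $|G(a,\ell,q)|\lesssim q^{-d/2}$, uniformly in $\ell$ and in $a\in\Z_q^\times$. This follows by factoring $G$ into one-dimensional normalized Gauss sums and applying the classical bound $|q^{-1}\sum_{n\in\Z_q}e_q(an^2+n\ell)|\lesssim q^{-1/2}$. For even $q$, some care is needed with the $2$-adic part, but the bound $\lesssim q^{-1/2}$ still holds for each factor.

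Second, I would exploit the support of the cutoffs. Since $\widetilde\psi_q(\xi-\ell/q)$ is supported in $|\xi-\ell/q|\le 1/(2q)$, the terms in \eqref{majorarc2} have disjoint supports on $\T^d$ for distinct $\ell\in\Z_q^d$. So I would decompose $\hat f=\sum_\ell \hat f\,\widetilde\Psi_q(\cdot-\ell/q)$, where $\widetilde\Psi$ is a slightly larger cutoff equal to $1$ on the support of $\widetilde\psi$, and rewrite the multiplier as
\[
m_\lambda^{a/q}(\xi)=\sum_{\ell}G(a,\ell,q)\,\widetilde\psi_q(\xi-\ell/q)\,\widetilde{d\sigma_\lambda}(\xi-\ell/q).
\]
Next, I would apply the sampling theorem of Magyar--Stein--Wainger (the discrete-to-continuous transference for multipliers supported in cubes of side $1/q$ around the points $\ell/q$). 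For a family of $\R^d$-multipliers $m_\lambda$ with $\|\sup_\lambda|(m_\lambda\widetilde F)^\vee|\|_{L^2(\R^d)}\le B\|F\|_{L^2(\R^d)}$, this theorem gives that the periodized operator with multiplier $\sum_\ell m_\lambda(\xi-\ell/q)\widetilde\psi_q(\xi-\ell/q)$ is bounded on $\ell^2(\Z^d)$ with norm $\lesssim B$, independently of $q$. The continuous input is Stein's spherical maximal theorem in $L^2(\R^d)$ for $d\ge 3$, composed with the bounded multiplier $\widetilde\psi_q$.

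The main obstacle is inserting the $\ell$-dependent coefficients $G(a,\ell,q)$ without losing the factor $q^{-d/2}$. I would handle this by factoring the operator as $M^{a/q}_\lambda=T_\lambda\circ S$. Here $S$ is the multiplier $\sum_\ell G(a,\ell,q)\widetilde\Psi_q(\xi-\ell/q)$, which is independent of $\lambda$, and $T_\lambda$ has multiplier $\sum_\ell\widetilde\psi_q\widetilde{d\sigma_\lambda}(\xi-\ell/q)$; the identity holds because $\widetilde\Psi\widetilde\psi=\widetilde\psi$. Then $\sup_\lambda|T_\lambda Sf|$ is controlled in $\ell^2$ by the sampling step, with norm $O(1)$. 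Meanwhile $\|S\|_{\ell^2\to\ell^2}=\sup_\xi|S(\xi)|\lesssim q^{-d/2}$, by Plancherel on $\T^d$, the disjointness of the supports, and the Gauss sum bound. Combining the two gives \eqref{eq:single-denom-l2}. The delicate point is verifying that the sampling constant is uniform in $q$ for a maximal operator; I would cite the MSW sampling proposition in its maximal form for this.
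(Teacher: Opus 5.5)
Your proposal is correct and is essentially the argument the paper relies on. The paper does not reprove Proposition~\ref{othermaxi}; it cites \cite[proof of Proposition~3.1(a)]{MR1888798}, which is your factorization of $M^{a/q}_\lambda$ into a $\lambda$-independent Gauss-sum multiplier of $\ell^2$ norm $\lesssim_d q^{-d/2}$, composed with a $q$-periodized, cut-off continuous spherical average that Proposition~\ref{sampling} and Stein's theorem control uniformly in $q$ (the paper reuses the same device as $B_\lambda=T_\lambda\circ U_N$).

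One detail needs tightening. The identity $\widetilde\Psi\widetilde\psi=\widetilde\psi$ does not by itself rule out cross terms $\widetilde\Psi_q(\xi-\ell/q)\widetilde\psi_q(\xi-\ell'/q)$ with $\ell\neq\ell'$ when $\widetilde\Psi$ is supported beyond radius $1/2$. Either choose the two support radii to sum to less than $1$ (as the paper does with $\widetilde\psi_{2Q}$ inside $\widetilde\psi_Q$), or take the non-smooth multiplier $S(\xi)=\sum_{\ell\in\Z_q^d}G(a,\ell,q)\mathbf 1_{\ell/q+[-1/(2q),1/(2q))^d}(\xi)$, which is all that is needed on $\ell^2$.
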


\subsubsection{The sampling argument}
We record the main result in \cite[Section 2]{MR1888798} in this subsection. Let $T$ be the convolution operator on $\R^d$ given by the kernel $K$ and $m$ be the corresponding symbol.
We shall assume in this section that in
addition to $m(\xi)$ being bounded, it is supported in the fundamental cube $(-1/2,1/2]^d$.
Thus $K_{dis}:= K|_{\Z^d}$ is well-defined, as is the convolution operator acting on
functions on $\Z^d$ given by
\begin{align*}
    T_{dis}(f)(n)=f*K_{dis}(n):=\sum_{m\in\Z^d}K(m)f(n-m).
\end{align*}
Then $m_{per}$ is the Fourier multiplier corresponding to $T_{dis}$ in the sense that
\begin{align*}
    \widehat{T_{dis}(f)}(\xi)=m_{per}(\xi)\cdot\wh f(\xi)
\end{align*}
for a suitable function $f$ on $\Z^d$. In \cite[Proposition 2.1]{MR1888798}, the following sampling estimate is proved:

\begin{proposition}\label{SAM1}
Let $1\leq p\leq\infty$ and let $B_1,B_2$ be finite-dimensional Banach
spaces.  Suppose that the bounded measurable multiplier $m$ takes values in
$\mathcal L(B_1,B_2)$, is supported in $(-1/2,1/2]^d$, and its continuous
operator $T$ is bounded from $L^p_{B_1}(\R^d)$ to $L^p_{B_2}(\R^d)$.  Then
$T_{dis}$ is bounded from $\ell^p_{B_1}(\Z^d)$ to
$\ell^p_{B_2}(\Z^d)$, and
\begin{align*}
 \left\|T_{dis}\right\|_{\ell^p_{B_1}(\Z^d)\to\ell^p_{B_2}(\Z^d)}
 \leq C_d
 \left\|T\right\|_{L^p_{B_1}(\R^d)\to L^p_{B_2}(\R^d)}.
\end{align*}
The constant $C_d$ is independent of $p,B_1$, and $B_2$.
\end{proposition}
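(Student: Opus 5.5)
The statement just above is Proposition~\ref{SAM1}, the Magyar--Stein--Wainger sampling principle. I would prove it by the standard transference argument from \cite[Section~2]{MR1888798}. The idea is to realize $T_{dis}$ as a continuous operator with a slightly enlarged frequency cutoff, sampled on $\Z^d$, and to move between discrete and continuous $L^p$ norms using functions of fixed compact Fourier support.

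Fix $\phi\in\mathcal S(\R^d)$ with $\widetilde\phi\equiv1$ on $[-1/2,1/2]^d$ and $\supp\widetilde\phi\subset[-3/4,3/4]^d$. Given $f\in\ell^p_{B_1}(\Z^d)$ finitely supported, extend it to $\R^d$ by
\[
F(x)=\sum_{n\in\Z^d}f(n)\phi(x-n).
\]
Then $\widetilde F(\xi)=\wh f(\xi)\widetilde\phi(\xi)$, where $\wh f$ is $1$-periodic. Since $m$ is supported in $(-1/2,1/2]^d$, where $\widetilde\phi=1$, we get $m\widetilde F=m\,\wh f$ on the fundamental cube, and $m\widetilde F=0$ elsewhere. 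Restricting $TF$ to $\Z^d$ therefore samples a function whose Fourier transform is $m\wh f$ on the cube. By Poisson summation its discrete transform is $m_{per}\wh f$, so $TF|_{\Z^d}=T_{dis}f$.

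Two norm comparisons then finish the proof. The first is $\|F\|_{L^p_{B_1}}\lesssim\|f\|_{\ell^p_{B_1}}$, which follows from Young's/Minkowski's inequality using $\sup_x\sum_n|\phi(x-n)|<\infty$ and $\int|\phi|<\infty$; the constant is independent of $p$ by interpolating the $p=1$ and $p=\infty$ cases, or by direct convexity. The second is a reverse sampling inequality for $G=TF$:
\[
\Big(\sum_n\|G(n)\|^p\Big)^{1/p}\lesssim\|G\|_{L^p}.
\]
This is where I expect the main obstacle. $G$ has Fourier support in the cube of side $1$, and sampling at spacing $1$ is exactly at the Nyquist rate, so the plain Plancherel--Pólya inequality is delicate. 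The MSW trick to handle this is to write $G=G\bigstar\phi$, valid because $\widetilde\phi=1$ on $\supp\widetilde G$. This gives
\[
\|G(n)\|\le\int\|G(y)\|\,|\phi(n-y)|\dd y.
\]
Summing in $n$ and applying Hölder with the weight $|\phi(n-y)|$, then using $\sup_y\sum_n|\phi(n-y)|<\infty$, yields the bound with a constant depending only on $\phi$ and $d$. Vector-valuedness causes no difficulty because only the norms $\|G(\cdot)\|_{B_2}$ and scalar kernels enter.

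Combining the pieces,
\[
\|T_{dis}f\|_{\ell^p_{B_2}}\lesssim\|TF\|_{L^p_{B_2}}\le\|T\|\,\|F\|_{L^p_{B_1}}\lesssim\|T\|\,\|f\|_{\ell^p_{B_1}},
\]
with constants depending only on $d$ through the fixed $\phi$. Finite-dimensionality of $B_1,B_2$ is used only so that the vector-valued Fourier multipliers and Poisson summation make sense without measurability issues. Density of finitely supported $f$ extends the bound to all of $\ell^p$ for $p<\infty$. For $p=\infty$ I would argue directly by duality or by the same pointwise estimates, which hold verbatim with supremum norms.
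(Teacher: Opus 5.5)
Your argument is correct. The paper does not prove Proposition~\ref{SAM1}. It cites \cite[Proposition~2.1]{MR1888798} and only indicates that the argument rests on the Fej\'er-type kernel $\Psi$ and the two-sided equivalence \eqref{contidiscrete}.

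Your route is the same transference idea (extend, apply $T$, restrict), but with a different auxiliary function, and that choice changes what is needed.
\begin{itemize}
\item Because $\widetilde\phi\equiv1$ on the closed cube, you get $m\widetilde F=m\wh f$. Hence $(TF)|_{\Z^d}=T_{dis}f$ holds exactly for finitely supported $f$.
\item Poisson summation is not even needed for this identity. For finitely supported $f$, $T_{dis}f(n)=\sum_k K(k)f(n-k)$ is a finite sum with $K(k)=\int m(\xi)e^{2\pi i k\cdot\xi}\dd\xi$, and you can interchange directly.
\item After that, only the two easy one-sided bounds are used. The independence of the constant from $p$, $B_1$, $B_2$ is then immediate.
\end{itemize}
With $\Psi$ instead, $\widetilde\Psi(\xi)=\prod_i(1-|\xi_i|)_+$ is not identically $1$ on $\supp m$, so this exact sampling identity is not available directly. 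What $\Psi$ buys is the interpolation property $\Psi(n)=\mathbf 1_{\{n=0\}}$. This gives $f_{ext}|_{\Z^d}=f$ and hence the two-sided equivalence \eqref{contidiscrete}, which the paper reuses, for instance in Proposition~\ref{boundforB}. Your $F$ does not restrict to $f$ on $\Z^d$, but nothing in your proof requires that.

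One correction to your commentary: the step you flag as the main obstacle is not delicate, and there is no Nyquist issue.
\begin{itemize}
\item The inequality $\|G|_{\Z^d}\|_{\ell^p}\lesssim\|G\|_{L^p}$ holds for every $G$ whose Fourier transform lies in a fixed compact set, at any sampling rate (Plancherel--P\'olya). It is the converse inequality that needs oversampling.
\item Your reproducing formula $G=G\bigstar\phi$ followed by H\"older is exactly the standard proof of this easy direction, so the argument stands as written.
\item At $p=\infty$, in the a priori formulation, nothing beyond $\sup_n\|G(n)\|\le\|G\|_{L^\infty}$ for continuous $G$ is required.
\end{itemize}
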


 The proof of this proposition relies on the specific choice of the sampling function on $\mathbb R^d$
\begin{align*}
\Psi(x):=\left(\frac{\sin\pi x_1}{\pi x_1}\right)^2\ldots\left(\frac{\sin\pi x_d}{\pi x_d}\right)^2,\quad\text{where}\,\,x=(x_1,\ldots,x_d)\in\R^d
\end{align*}
and the general extension principle: Let $B$ be a finite-dimensional Banach space. For any suitable function $f:\Z^d\rightarrow B$, define its extension $F=f_{ext}$ on $\mathbb R^d$ given by
\begin{align*}
 F(x)=f_{ext}(x):=\sum_{n\in\Z^d}f(n)\Psi(x-n), 
\end{align*}
then we have the norm equivalence 
\begin{align}\label{contidiscrete}
\left\|f\right\|_{\ell^p_{B}(\Z^d)}\simeq_{d} \left\|F\right\|_{L^p_{B}(\R^d)}.\end{align}

We also use a rescaled form. Fix an integer $q\geq1$ and suppose that $m$ is supported in $(-1/(2q),1/(2q)]^d$. Define
\[
  m^q_{per}(\xi):=\sum_{\ell\in\Z^d}m(\xi-\ell/q).
\]
Let $T^q_{dis}$ be the corresponding convolution operator with Fourier multiplier $m^q_{per}$.The following property of $T^q_{dis}$ is established in \cite[Corollary~2.1]{MR1888798}.

\begin{proposition}\label{sampling}
Let $q\geq1$ and suppose, in addition to the hypotheses of Proposition~\ref{SAM1}, that $m$ is supported in
$(-1/(2q),1/(2q)]^d$.  Then
\begin{align*}
 \left\|T^q_{dis}\right\|_{\ell^p_{B_1}(\Z^d)\to\ell^p_{B_2}(\Z^d)}
 \leq C_d
 \left\|T\right\|_{L^p_{B_1}(\R^d)\to L^p_{B_2}(\R^d)}.
\end{align*}
The constant $C_d$ is independent of $q,p,B_1$, and $B_2$.
\end{proposition}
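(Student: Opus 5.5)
This is Proposition~\ref{sampling}, i.e.\ \cite[Corollary~2.1]{MR1888798}. The plan is to reduce it to Proposition~\ref{SAM1} by the standard coset decomposition of $\Z^d$ modulo $q$. The key observation concerns the kernel of $T^q_{dis}$. Since $m$ is supported in $(-1/(2q),1/(2q)]^d$, the translates $m(\cdot-\ell/q)$ with $\ell$ running over $\Z^d$ have disjoint supports, and $m^q_{per}$ is $\Z^d/q$-periodic. Its inverse Fourier transform on $\Z^d$ is therefore supported on $q\Z^d$, and there it equals (up to the normalization $q^{d}\cdot q^{-d}=1$) the samples $K(qn)$ of the continuous kernel.

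\textbf{Step 1.} I would carry out this computation explicitly:
\[
\sum_{\ell\in\Z_q^d}\int m(\xi-\ell/q)e(n\cdot\xi)\,d\xi
=\Big(\sum_{\ell\in\Z_q^d}e(n\cdot\ell/q)\Big)K(n)=q^d\one_{q\Z^d}(n)K(n).
\]
Here one should be careful with the periodization over $\ell\in\Z^d$ versus $\Z_q^d$ on the torus; the factor $q^d$ is absorbed by the change of variables in Step~2.

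\textbf{Step 2.} Rescale. Put $m_q(\xi):=m(\xi/q)$. This is supported in $(-1/2,1/2]^d$, and its continuous operator $T_q$ is conjugate to $T$ by the dilation $x\mapsto qx$. Hence $\|T_q\|_{L^p_{B_1}\to L^p_{B_2}}=\|T\|_{L^p_{B_1}\to L^p_{B_2}}$, because $L^p$ norms scale identically on both sides. Its kernel is $K_q(x)=q^dK(qx)$. By Proposition~\ref{SAM1}, the discrete operator $f\mapsto f*K_q|_{\Z^d}$ is bounded on $\ell^p$ with norm at most $C_d\|T\|$, and $C_d$ does not depend on $q$.

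\textbf{Step 3.} Decompose into cosets. For $f$ on $\Z^d$ and $r\in\{0,\dots,q-1\}^d$, set $f_r(n):=f(qn+r)$. By Step~1,
\[
T^q_{dis}f(qn+r)=\sum_{k}q^dK(qk)f(q(n-k)+r)=(f_r*K_q|_{\Z^d})(n).
\]
Applying Step~2 on each coset and summing $p$-th powers over $r$ (or taking the supremum over $r$ when $p=\infty$) gives the bound with the same constant $C_d$.

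The only delicate point is the bookkeeping in Step~1. One must confirm that the normalization produces exactly $q^dK(qk)$ on $q\Z^d$, and that the kernel vanishes off $q\Z^d$. Both facts follow from orthogonality of characters on $\Z_q^d$ together with the disjointness of the supports. Boundedness and measurability carry over trivially, and finite-dimensionality of $B_1,B_2$ is used only through Proposition~\ref{SAM1}.
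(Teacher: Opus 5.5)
Your proof is correct and is essentially the standard argument behind \cite[Corollary~2.1]{MR1888798}, which the paper cites without reproving: the kernel of $T^q_{dis}$ is $q^d\one_{q\Z^d}K$, and on each coset of $q\Z^d$ the operator is the discretization of the dilated multiplier $m(\cdot/q)$, whose continuous operator has the same $L^p_{B_1}\to L^p_{B_2}$ norm as $T$, so Proposition~\ref{SAM1} applies with a constant independent of $q$. The only slip is cosmetic: your introductory sentence says the kernel on $q\Z^d$ equals the samples $K(qn)$, whereas it is $q^dK(qn)$, as you correctly compute and use in Steps~1--3.
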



\medskip

\section{An effective MSW decomposition}\label{sec:3}
\subsection{MSW decomposition}
In this section, we modify the decomposition given in \cite{MR4041278} to decompose the major arc. 

Fix an integer $N\geq2$ and set $Q=N!$. Define the $Q$-block highly composite major-arc multiplier
\begin{equation}\label{eq:b-lambda}  b_\lambda(\xi)=\cMSW\sum_{0\leq a'<Q}\sum_{\ell'\in\Z_Q^d}
G(a',\ell',Q)\,\widetilde\psi_{2Q}(\xi-\ell'/Q)\,
\widetilde{d\sigma_\lambda}(\xi-\ell'/Q),\quad\lambda>0,
\end{equation}
and let $B_\lambda$ denote the corresponding convolution operator. This is an algebraic, phase-free definition for every $\lambda>0$. Factorial divisibility will be used only when this model is compared with the low major arcs on the tail $\lambda_k=\mu_k!$, $\mu_k\geq N^3$.
 We first decompose the multiplier of the major arc $m_\lambda$ into the high part, $m_{\lambda,>N}$ and the low part, $m_{\lambda,\leq N}$.  We next decompose the low part $m_{\lambda,\leq N}$ into $b_\lambda$ and $m_{\lambda,\leq N}-b_\lambda$. Recall that $m_\lambda(\xi)$ is given by
\begin{align*}
   \cMSW\sum_{q=1}^{\infty} \sum_{a \in (\Z/q\Z)^\times} e_q(-\lambda a) m^{a/q}_\lambda(\xi).
\end{align*}
Define the low part of $m_\lambda$ by
\begin{align}\label{low}
   m_{\lambda,\leq N}(\xi):= \cMSW\sum_{q=1}^{N} \sum_{a \in (\Z/q\Z)^\times} e_q(-\lambda a) m^{a/q}_\lambda(\xi)
\end{align}
and the high part of $m_\lambda$ by
\begin{align}\label{high}
     m_{\lambda,> N}(\xi):=\cMSW\sum_{q=N+1}^{\infty} \sum_{a \in (\Z/q\Z)^\times} e_q(-\lambda a) m^{a/q}_\lambda(\xi).
\end{align}
 Thus the high--low decomposition gives $
m_\lambda=m_{\lambda,\leq N}+m_{\lambda,> N}.$ Moreover, observe that the low part can be written as the sum of $m_{\lambda,\leq N}-b_\lambda$ and $b_\lambda$, and hence
\begin{align}\label{highlowdecom1}
    m_\lambda= b_\lambda+\left[m_{\lambda,\leq N}-b_\lambda\right]+m_{\lambda,> N}.
\end{align}

\subsubsection{Equivalent form of $b_\lambda$}
To simplify the term $m_{\lambda,\leq N}-b_\lambda$, we compare the $Q$-model with the reduced arcs of denominator at most $N$.  We first reduce the $Q$-Gauss sums.  For $a'\in\Z_Q$ and $\ell'\in\Z_Q^d$, write
\[
\rho=\gcd(a',\ell'_1,\ldots,\ell'_d,Q).
\]
Then the property of the quadratic Gauss sum gives that 
\begin{equation}\label{eq:gauss-reduction}
  G(a',\ell',Q)=
  \begin{cases}
 G(a'/\rho,\ell'/\rho,Q/\rho),& \gcd(a',Q)=\rho,\\
  0,&\gcd(a',Q)>\rho.
  \end{cases}
\end{equation}
In fact, if $\gcd(a',Q)$ does not divide $\ell'_j$ for some $j$, then summing first in the $j$-th variable gives zero. Otherwise, $\gcd(a',Q)|\ell'_j$ for all $j$, and factoring out the common divisor reduces the normalized Gauss sum from modulus $Q$ to modulus $Q/\gcd(a',Q)$. This gives the above equality in the case that $\gcd(a',Q)=\rho$.

Consequently, the multiplier \eqref{eq:b-lambda} can be written, 
for every $\lambda>0$, as
\begin{equation}\label{eq:B-grouped}
  b_\lambda(\xi)=\cMSW
  \sum_{q\mid Q}\sum_{a\in(\Z/q\Z)^\times}\sum_{\ell\in\Z_q^d}
  G(a,\ell,q)\,\widetilde\psi_{2Q}(\xi-\ell/q)\,
  \widetilde{d\sigma_\lambda}(\xi-\ell/q).
\end{equation}
Here, $q=Q/\rho$, $a=a'/\rho$ and $\ell=\ell'/\rho$.  Conversely, every triple
$q\mid Q$, $a\in(\Z/q\Z)^\times$, $\ell\in\Z_q^d$ occurs exactly once. 

For each $q|Q$ and $a\in(\Z/q\Z)^\times$, define the multiplier
\begin{align}
    b_\lambda^{a/q}(\xi):=\sum_{\ell\in\Z_q^d}G(a,\ell,q)\,\widetilde\psi_{2Q}(\xi-\ell/q)\,
  \widetilde{d\sigma_\lambda}(\xi-\ell/q).
\end{align}
Then, from (\ref{low}) and (\ref{eq:B-grouped}), one has 
\begin{align}\label{highlowdecom2}
  m_{\lambda,\leq N}(\xi)-b_\lambda(\xi)
  &=-\cMSW
  \sum_{\substack{q\mid Q\\q>N}}
  \sum_{a\in(\Z/q\Z)^\times}b^{a/q}_\lambda(\xi)\notag\\
  &\quad+\cMSW
  \sum_{q\leq N}\sum_{a\in(\Z/q\Z)^\times}
  \left[e_q(-\lambda a)m^{a/q}_\lambda(\xi)-b^{a/q}_\lambda(\xi)\right].
\end{align}

\subsection{The \texorpdfstring{$\ell^2$}{l2}-bound and the sampling argument for \texorpdfstring{$\ell^p$}{lp} estimates}
We prove the $\ell^2$ estimate for $\sup_{k:\,\mu_k\geq N^3}
| \overline A_{\lambda_k}f-B_{\lambda_k}f|$ in this subsection. Our argument relies on the $\ell^2$ estimate on the minor arc, Proposition \ref{minor}.
\begin{lemma}\label{lem:tail-error}
Let $d\geq5$ and let $\lambda_k=\mu_k!$, where $\{\mu_k\}$ is a strictly increasing sequence of positive integers. For every $N\geq2$,
\begin{equation}\label{eq:tail-error}
  \left\|\sup_{k:\,\mu_k\geq N^3}| 
  \overline A_{\lambda_k}f-B_{\lambda_k}f|\right\|_{\ell^2}
  \lesssim_d N^{-(d-4)/2}\|f\|_{\ell^2}.
\end{equation}
\end{lemma}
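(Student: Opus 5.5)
We write, for $\mu_k\ge N^3$,
\[
\overline A_{\lambda_k}-B_{\lambda_k}=E_{\lambda_k}+M_{\lambda_k,>N}+\bigl(M_{\lambda_k,\le N}-B_{\lambda_k}\bigr),
\]
using \eqref{eq:MSW_decomp} and \eqref{highlowdecom1}. We estimate the supremum over $k$ of each of the three pieces separately in $\ell^2$.

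\emph{Minor-arc term.} For the minor-arc term we dominate the supremum by an $\ell^2$-sum over $k$ and apply Proposition~\ref{minor} with $\Lambda=\lambda_k$:
\[
\Bigl\|\sup_k|E_{\lambda_k}f|\Bigr\|_{\ell^2}\lesssim \sum_{\mu_k\ge N^3}\lambda_k^{1-d/4}\|f\|_{\ell^2}.
\]
Since the $\mu_k\ge N^3$ are distinct integers, this sum is at most $\sum_{m\ge N^3}(m!)^{-1/4}$. That quantity is far smaller than $N^{-(d-4)/2}$.

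\emph{High major arcs.} For $M_{\lambda,>N}$ we use Proposition~\ref{othermaxi} together with the triangle inequality over reduced fractions:
\[
\Bigl\|\sup_\lambda|M_{\lambda,>N}f|\Bigr\|_{\ell^2}\lesssim\sum_{q>N}\varphi(q)\,q^{-d/2}\|f\|_{\ell^2}\lesssim\sum_{q>N}q^{1-d/2}\|f\|_{\ell^2}\lesssim N^{2-d/2}\|f\|_{\ell^2}.
\]
This equals $N^{-(d-4)/2}\|f\|_{\ell^2}$, as required.

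\emph{Comparison term.} We use \eqref{highlowdecom2}, which splits it into two pieces.

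The first piece is $-\cMSW\sum_{q\mid Q,\,q>N}\sum_a b^{a/q}_\lambda$. The multiplier $b^{a/q}_\lambda$ has the same structure as $m^{a/q}_\lambda$, with cutoff $\widetilde\psi_{2Q}$ in place of $\widetilde\psi_q$. Since $q\mid Q$, the support $|\xi-\ell/q|\le 1/(4Q)$ sits inside the cube of side $1/q$ around $\ell/q$. So the argument behind Proposition~\ref{othermaxi} applies verbatim:
\begin{itemize}
\item Proposition~\ref{sampling} transfers the continuous $L^2$ bound for the spherical maximal function composed with a smooth cutoff, which holds for $d\ge3$.
\item The Gauss-sum bound $|G(a,\ell,q)|\lesssim q^{-d/2}$ handles the arithmetic factor.
\end{itemize}
This gives $\|\sup_\lambda|B^{a/q}_\lambda f|\|_{\ell^2}\lesssim q^{-d/2}\|f\|_{\ell^2}$ uniformly in $Q$. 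Summing over $q>N$ and $a$ again gives $N^{2-d/2}$.

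The second piece, over $q\le N$, is where factoriality is used. For $q\le N$ and $\mu_k\ge N^3$ we have $q\mid\lambda_k=\mu_k!$, so $e_q(-\lambda_k a)=1$. The term therefore reduces to
\[
\sum_\ell G(a,\ell,q)\bigl[\widetilde\psi_q-\widetilde\psi_{2Q}\bigr](\xi-\ell/q)\,\widetilde{d\sigma_{\lambda_k}}(\xi-\ell/q).
\]
The difference of cutoffs vanishes for $|\xi-\ell/q|\le 1/(8Q)$. So on its support $|\xi-\ell/q|\gtrsim 1/Q$, and stationary phase gives
\[
|\widetilde{d\sigma_\lambda}(\eta)|\lesssim(\lambda^{1/2}|\eta|)^{-(d-1)/2}\lesssim (Q^2/\lambda_k)^{(d-1)/4}.
\]
Because $\mu_k\ge N^3$, we have $\lambda_k\ge(N^3)!\gg (N!)^{C}=Q^C$, so this factor is super-polynomially small.

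We bound the supremum over $k$ by the square function $\bigl(\sum_k|\cdot|^2\bigr)^{1/2}$. By Plancherel, using disjointness in $\ell$ of the bumps, and applying the Gauss-sum bound, the contribution is
\[
\lesssim\sum_{q\le N}\varphi(q)\,q^{-d/2}\Bigl(\sum_{\mu_k\ge N^3}(Q^2/\lambda_k)^{(d-1)/2}\Bigr)^{1/2}\|f\|_{\ell^2}.
\]
The inner sum is at most $\sum_{m\ge N^3}(Q^2/m!)^{(d-1)/2}$, which is super-exponentially small in $N$. Hence this piece is $\lesssim N^{-(d-4)/2}\|f\|_{\ell^2}$.

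\emph{Main obstacle.} The main obstacle is checking that the $b^{a/q}$ maximal bound holds uniformly in $Q$. One must verify that the cutoffs $\widetilde\psi_{2Q}(\cdot-\ell/q)$ have disjoint supports and that the sampling proposition applies with modulus $q$ even though the cutoff scale is $Q$. We handle this by viewing $\widetilde\psi_{2Q}=\widetilde\psi_{2Q}\,\widetilde\psi_q$, using $\widetilde\psi_q\equiv1$ on $\operatorname{supp}\widetilde\psi_{2Q}$. We then bound the continuous operator $\sup_\lambda|(\widetilde\psi_{2Q}\,\widetilde{d\sigma_\lambda})^\vee\bigstar F|$ on $L^2$ by the spherical maximal function, with a constant independent of $Q$.
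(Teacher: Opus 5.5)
Your proposal is correct and follows essentially the same route as the paper. It uses the same three-way split $E_{\lambda_k}+M_{\lambda_k,>N}+(M_{\lambda_k,\le N}-B_{\lambda_k})$, Proposition~\ref{othermaxi} for the high arcs, a uniform-in-$Q$ maximal bound for the $q\mid Q$, $q>N$ piece, and, for the $q\le N$ piece, factorial divisibility $e_q(-\lambda_k a)=1$ followed by stationary phase, a square function and Plancherel. The differences are minor: you sum the minor-arc bound over individual $\lambda_k$ rather than dyadic blocks, and you spell out the uniform bound for $B^{a/q}_\lambda$ (via $\widetilde\psi_{2Q}=\widetilde\psi_{2Q}\widetilde\psi_q$ and sampling at modulus $q$), which the paper only asserts by appeal to the MSW argument.
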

\begin{proof}
By the triangle inequality, (\ref{eq:MSW_decomp}) and (\ref{highlowdecom1}), one has, for all $k>0$
\begin{align}\label{tri}
    \left| \overline A_{\lambda_k} f-B_{\lambda_k}f\right|\leq  \left|E_{\lambda_k} f\right|+  \left|M_{\lambda_k,>N} f\right|+\left|M_{\lambda_k,\leq N} f-B_{\lambda_k} f\right|.
\end{align}

We will estimate the $\ell^2$-norm of each part, starting with the estimate on the minor arc.\\
  Set $\Lambda_N=(N^3)!$, then  $\mu_k\geq N^3$ implies $\lambda_k\geq\Lambda_N$.  By the dyadic decomposition and Proposition \ref{minor}, we can deduce that
\begin{align*}
  \left\|\sup_{k:\,\mu_k\geq N^3}|E_{\lambda_k}f|\right\|_{\ell^2}
  &\leq \sum_{j\geq0}
  \left\|\sup_{2^j\Lambda_N\leq \lambda<2^{j+1}\Lambda_N}|E_\lambda f|\right\|_{\ell^2} 
  \lesssim _d \Lambda_N^{-(d-4)/4}\|f\|_{\ell^2}.
\end{align*}
As a result, we have 
\begin{align}
    \left\|\sup_{k:\,\mu_k\geq N^3}|E_{\lambda_k}f|\right\|_{\ell^2}\lesssim_d  N^{-(d-4)/2}\|f\|_{\ell^2},
  \label{eq:tail-minor-detail}
\end{align}
in which we use the elementary consequence $\Lambda_N\geq N^2$.

Next we bound the term $M_{\lambda_k,>N}f$. From the definition of $M_{\lambda_k,>N}f$ and the maximal function estimate (\ref{eq:single-denom-l2}), we have
\begin{align}
  \left\|\sup_{k:\,\mu_k\geq N^3}
  \left|M_{\lambda_k,>N}f\right|\right\|_{\ell^2}&\lesssim_d\left\|\sup_{k:\,\mu_k\geq N^3}
  \left|\sum_{q>N}\sum_{a\in(\Z/q\Z)^\times}e_q(-\lambda_ka)M_{\lambda_k}^{a/q}f\right|\right\|_{\ell^2} \notag\\
  &\leq
  \sum_{q>N}\sum_{a\in(\Z/q\Z)^\times}
  \left\|M^{a/q}_\star f\right\|_{\ell^2}\notag\\
  &\lesssim_d \sum_{q>N}\sum_{a\in(\Z/q\Z)^\times} q^{-d/2}\|f\|_{\ell^2}.
  \label{eq:tail-large-q-detail}
\end{align}
Summing over the primitive residues $a \in (\Z/q\Z)^\times$, noting that the number of such residues is given by Euler's totient function $\varphi(q) \le q$, then (\ref{eq:tail-large-q-detail}) gives that
\begin{align}\label{FFFF}
    \left\|\sup_{k:\,\mu_k\geq N^3}
  \left|M_{\lambda_k,>N}f\right|\right\|_{\ell^2}\lesssim_d \sum_{q>N}q^{1-d/2}\|f\|_{\ell^2}
 \simeq N^{-(d-4)/2}\|f\|_{\ell^2} .
\end{align}

It remains to estimate the term 
$\left|M_{\lambda_k,\leq N} f-B_{\lambda_k} f\right|$. Note that for all $\mu_k\geq N^3$ and $q\leq N$, $\lambda_k=\mu_k!$ is divisible by $q$, and hence, in this case,  $e_q(-\lambda_k a)=e(-\lambda_k a/q)=1$. Therefore, from (\ref{highlowdecom2}),  the multiplier of $M_{\lambda_k,\leq N} -B_{\lambda_k}$ can be written as
\begin{align*}
m_{\lambda_k,\leq N}(\xi)-b_{\lambda_k}(\xi)
&=-\cMSW\sum_{\substack{q\mid Q\\q>N}}
  \sum_{a\in(\Z/q\Z)^\times}b^{a/q}_{\lambda_k}(\xi)\\
&\quad+\cMSW\sum_{q\leq N}\sum_{a\in(\Z/q\Z)^\times}
  \left[m^{a/q}_{\lambda_k}(\xi)-b^{a/q}_{\lambda_k}(\xi)\right].
\end{align*}
Equivalently, for each $\mu_k\geq N^3$
\begin{align}\label{cutoffdiff}
M_{\lambda_k,\leq N}-B_{\lambda_k}
&=-\cMSW\sum_{\substack{q\mid Q\\q>N}}
  \sum_{a\in(\Z/q\Z)^\times}B^{a/q}_{\lambda_k}\notag\\
&\quad+\cMSW\sum_{q\leq N}\sum_{a\in(\Z/q\Z)^\times}
  \left[M^{a/q}_{\lambda_k}-B^{a/q}_{\lambda_k}\right].
\end{align}
The first term on the right can be estimated in the same way as in \eqref{eq:tail-large-q-detail}: the proof of \eqref{eq:single-denom-l2} 
given in \cite[proof of Proposition~3.1(a), pp.~199--201]{MR1888798} is unchanged if the cutoff $\widetilde\psi_q$ is replaced by the thinner cutoff $\widetilde\psi_{2Q}$, and therefore
\begin{align}
  \left\|\sup_{k:\,\mu_k\geq N^3}
|\sum_{\substack{q\mid Q\\ q>N}}\sum_{a\in(\Z/q\Z)^\times}B_{\lambda_k}^{a/q}f|\right\|_{\ell^2}
  &\lesssim_d N^{-(d-4)/2}\|f\|_{\ell^2} .
  \label{eq:B-large-q-detail}
\end{align}

We finally treat the second term on the right of (\ref{cutoffdiff}). By the definition, the multiplier of $M^{a/q}_{\lambda_k}-B^{a/q}_{\lambda_k}$ is 
\begin{equation}\label{eq:cutoff-diff-mult}
  \left[ m^{a/q}_{\lambda_k}(\xi)-b^{a/q}_{\lambda_k}(\xi)\right]=
  \sum_{\ell\in\Z_q^d}G(a,\ell,q)
  \bigl[\widetilde\psi_q(\xi-\ell/q)-\widetilde\psi_{2Q}(\xi-\ell/q)\bigr]
  \widetilde{d\sigma_{\lambda_k}}(\xi-\ell/q).
\end{equation}
The cutoffs are chosen so that the bracket in \eqref{eq:cutoff-diff-mult} vanishes when
$|\xi-\ell/q|\leq c/Q$ and is supported where $|\xi-\ell/q|\leq C/q$.   Since we have the Gauss-sum bound $|G(a,\ell,q)|\lesssim_d q^{-d/2}$, then stationary phase \cite{MR1232192} gives 
\begin{equation}\label{eq:stationary-phase-use}
  \left|m^{a/q}_{\lambda_k}(\xi)-b^{a/q}_{\lambda_k}(\xi)\right|
  \lesssim_d q^{-d/2}
  \sum_{\ell\in\Z_q^d}\mathbf 1_{c/Q\leq |\xi-\ell/q|\leq C/q}
  (1+\lambda_k^{1/2}|\xi-\ell/q|)^{-(d-1)/2}.
\end{equation}
If $\mu_k\geq N^3$, then
$\lambda^{1/2}_{k+1}/\lambda^{1/2}_k\geq (\mu_k+1)^{1/2}\geq2$.  Hence, for every $\eta$ with $|\eta|\geq c/Q$, 
\[
  \sum_{k:\,\mu_k\geq N^3}(1+\lambda^{1/2}_k|\eta|)^{-(d-1)}
  \lesssim_d (\Lambda^{1/2}_N|\eta|)^{-(d-1)}
  \lesssim_d (\Lambda^{1/2}_N/Q)^{-(d-1)}.
\]
At each $\xi\in\T^d$, the translated cutoff supports in
\eqref{eq:cutoff-diff-mult} have bounded overlap.  Squaring
\eqref{eq:stationary-phase-use}, summing in $k$, and using the preceding
geometric-series estimate therefore gives
\[
  \sup_{\xi\in\T^d}
  \sum_{k:\,\mu_k\geq N^3}
  \left|m^{a/q}_{\lambda_k}(\xi)-b^{a/q}_{\lambda_k}(\xi)\right|^2
  \lesssim_d q^{-d}(\Lambda_N^{1/2}/Q)^{-(d-1)}.
\]
Thus $\sup_k|u_k|\leq(\sum_k|u_k|^2)^{1/2}$ and Plancherel's theorem yield,
for each fixed $q\leq N$ and $a\in(\Z/q\Z)^\times$,
\begin{equation}\label{eq:cutoff-diff-L2}
  \left\|\sup_{k:\,\mu_k\geq N^3}
  |(M^{a/q}_{\lambda_k}-B^{a/q}_{\lambda_k})f|\right\|_{\ell^2}
  \lesssim_d q^{-d/2}(\Lambda^{1/2}_N/Q)^{-(d-1)/2}\|f\|_{\ell^2} .
\end{equation}
Summing \eqref{eq:cutoff-diff-L2} over $q\leq N$ and $a\in(\Z/q\Z)^\times$ gives that
\begin{align}
\left\|\sup_{k:\,\mu_k\geq N^3}
  \left|\sum_{q\leq N}\sum_{a\in(\Z/q\Z)^\times}
 (M^{a/q}_{\lambda_k}-B^{a/q}_{\lambda_k})f\right|\right\|_{\ell^2} &\lesssim_d (\Lambda^{1/2}_N/Q)^{-(d-1)/2}
  \sum_{q\leq N}q^{1-d/2}\|f\|_{\ell^2}\notag\\
 &\lesssim_d (\Lambda^{1/2}_N/Q)^{-(d-1)/2}\|f\|_{\ell^2},
  \label{eq:small-q-cutoff-detail}
\end{align}
where the last inequality follows from the restriction $d\geq5$.
Since $\Lambda^{1/2}_N/Q=((N^3)!)^{1/2}/N!$, Stirling's formula shows that the last display is 
$O_{d,A}(N^{-A})$$\|f\|_2$ for every fixed $A>0$; in particular it is $O_d(N^{-(d-4)/2})\|f\|_2$.

Combining \eqref{tri}, \eqref{eq:tail-minor-detail}, \eqref{FFFF}, \eqref{eq:B-large-q-detail}, and \eqref{eq:small-q-cutoff-detail}, we obtain \eqref{eq:tail-error}.
\end{proof}

\subsubsection{The factorization of $B_\lambda$ and the sampling argument}
The factorization of $B_\lambda$ originates in  \cite[Section 3]{MR1888798}. See also \cite{MR3982252} for the application of this factorization.
Motivated by that, we factorize the convolution operator $B_\lambda$ into:
\begin{equation}\label{eq:factor}
  B_\lambda=T_\lambda\circ U_N,
\end{equation}
in which $T_\lambda$ is the convolution operator with multiplier
\begin{align}
  t_\lambda(\xi)=\sum_{\ell'\in\Z_Q^d}
  \widetilde\psi_{2Q}(\xi-\ell'/Q)\,
  \widetilde{d\sigma_\lambda}(\xi-\ell'/Q),\label{eq:t-R}
\end{align}
and $U_N$ is the convolution operator with multiplier
\begin{align}
  u_N(\xi)=\cMSW\sum_{\ell'\in\Z_Q^d}\sum_{0\leq a'<Q}G(a',\ell',Q)
  \widetilde\psi_Q(\xi-\ell'/Q).\label{eq:u-N}
\end{align}
The choice of the smooth cutoff function and (\ref{eq:B-grouped}) implies that $\widetilde{\psi}_{2Q}=\widetilde{\psi}_{2Q}\cdot\widetilde{\psi}_Q$.
Distinct centers in $Q^{-1}\mathbb Z^d/\mathbb Z^d$ are separated by at least $1/Q$, whereas the two cutoff support radii sum to $3/(4Q)$; therefore the product has no cross terms and $b_\lambda(\xi)=t_\lambda(\xi)\cdot u_N(\xi)$ and hence \eqref{eq:factor}.

In the remaining part of this section, on one hand, we will apply Young's convolution inequality to deduce the $\ell^p$-bound for $U_N$. On the other hand, possessing the general framework of sampling argument, we will get the $\ell^p$ estimate for $T_{\lambda_k}$.
\begin{lemma}\label{lem:U-bound}
For $1\leq p\leq2$,
\begin{equation}\label{eq:U-bound}
  \|U_N f\|_{\ell^p}\lesssim_d \|f\|_{\ell^p},
\end{equation}
with a constant independent of $N$.
\end{lemma}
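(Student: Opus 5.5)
The plan is to compute the convolution kernel of $U_N$ explicitly, show that its $\ell^1(\Z^d)$ norm is bounded independently of $N$, and then apply Young's inequality $\|K*f\|_{\ell^p}\leq\|K\|_{\ell^1}\|f\|_{\ell^p}$. This gives \eqref{eq:U-bound} for every $1\leq p\leq\infty$, in particular for $1\leq p\leq 2$.

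\emph{Step 1: the kernel.} Since $\widetilde\psi_Q$ is supported in $|\xi|\leq 1/(2Q)$, the translates in \eqref{eq:u-N} are disjoint. Let $\Phi$ be the Schwartz function on $\R^d$ whose Fourier transform is $\widetilde\psi$. Computing the inverse Fourier transform on $\T^d$ gives
\[
K(x)=\cMSW\,Q^{-d}\,\Phi(x/Q)\sum_{\ell'\in\Z_Q^d}\sum_{0\leq a'<Q}G(a',\ell',Q)\,e_Q(x\cdot\ell'),
\]
up to the sign convention in the phase. Inserting the definition \eqref{normalizedGauss} and summing first in $\ell'$ forces $n\equiv -x\pmod Q$. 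The inner double sum therefore collapses to
\[
\sum_{0\leq a'<Q}e_Q(a'|x|^2)=Q\,\one_{\{Q\mid |x|^2\}}.
\]
Hence
\[
K(x)=\cMSW\,Q^{1-d}\,\Phi(x/Q)\,\one_{\{|x|^2\equiv0\ (\mathrm{mod}\ Q)\}}.
\]

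\emph{Step 2: the $\ell^1$ bound.} Write $x=r+Qm$ with $r\in\Z_Q^d$ and $m\in\Z^d$. The condition $Q\mid|x|^2$ depends only on $r$. For a fixed $r$, the rapid decay of $\Phi$ gives
\[
\sum_{m\in\Z^d}|\Phi(r/Q+m)|\lesssim_d 1
\]
uniformly in $r$ and $Q$. Therefore
\[
\|K\|_{\ell^1}\lesssim Q^{1-d}\,\#\{r\in\Z_Q^d:|r|^2\equiv0\ (\mathrm{mod}\ Q)\}.
\]
By orthogonality,
\[
\#\{r:|r|^2\equiv 0\}=Q^{d-1}\sum_{0\leq a'<Q}G(a',0,Q).
\]
Using the reduction \eqref{eq:gauss-reduction} with $\ell'=0$, this equals
\[
Q^{d-1}\sum_{q\mid Q}\sum_{a\in\Z_q^\times}G(a,0,q).
\]
The Gauss-sum bound $|G(a,0,q)|\lesssim_d q^{-d/2}$ then yields
\[
\#\{r:|r|^2\equiv 0\}\lesssim Q^{d-1}\sum_{q\geq1}q^{1-d/2}\lesssim_d Q^{d-1},
\]
where the last sum converges because $d\geq5$. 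Consequently $\|K\|_{\ell^1}\lesssim_d 1$, uniformly in $N$, and Young's inequality finishes the proof.

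\emph{Main obstacle.} The key point is the uniform counting bound in Step 2. The prefactor $Q^{1-d}$ must be exactly compensated by the density of solutions of $|r|^2\equiv0\pmod Q$. This is where $d\geq5$ (via summability of $q^{1-d/2}$) and the Gauss-sum reduction enter. The remaining steps (the kernel computation and the decay of $\Phi$ summed over a shifted lattice of unit spacing) are routine.
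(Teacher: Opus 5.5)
Your proof is correct. Step 1 is the same kernel computation as in the paper (the paper's $\mathcal F^{-1}[\widetilde\psi_Q](m)$ is your $Q^{-d}\Phi(m/Q)$), so the genuine difference is in how you bound $\|K\|_{\ell^1}$.

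The paper keeps the sum global. It uses $|\Phi(y)|\lesssim_M(1+|y|)^{-M}$ and splits $\Z^d$ into the ball $|m|\le Q$ and the annuli $2^sQ<|m|\le2^{s+1}Q$. It then counts the points with $Q\mid|m|^2$ in each annulus by summing the sphere bound $r_d(jQ)\lesssim_d(jQ)^{d/2-1}$ over $j\le 2^{2s+2}Q$. This gives $\lesssim_d 2^{sd}Q^{d-1}$ points against the weight $2^{-sM}Q^{1-d}$.

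You instead use $|r+Qm|^2\equiv|r|^2\pmod Q$ to factor the $\ell^1$ sum into two pieces:
\begin{itemize}
\item the periodized sum $\sum_m|\Phi(r/Q+m)|\lesssim 1$;
\item the local count $\#\{r\in\Z_Q^d:|r|^2\equiv0\pmod Q\}=Q^{d-1}\sum_{q\mid Q}\sum_{a\in\Z_q^\times}G(a,0,q)$, a partial singular series at $0$, which the Gauss-sum bound controls.
\end{itemize}

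Your route stays entirely within the tools of Section 3: orthogonality, \eqref{eq:gauss-reduction}, and $|G(a,\ell,q)|\lesssim_d q^{-d/2}$. It needs neither a dyadic decomposition nor the external bound on $r_d(n)$, and it isolates exactly where $d\ge5$ enters. It is even two-sided. By Poisson summation $\sum_m\Phi(y+m)=\widetilde\psi(0)=1$, so $\sum_m|\Phi(y+m)|\ge1$. Hence $\|K\|_{\ell^1}$ is comparable to that partial singular series, which is unbounded in $N$ when $d=4$.

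The paper's route is shorter once $r_d(n)\lesssim_d n^{d/2-1}$ is taken as known. However, that bound is itself usually derived from the same boundedness of the singular series for $d\ge5$.
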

\begin{proof}
To apply Young's convolution inequality, it suffices to get the uniform  $\ell^1$ bound for the convolution kernel of $U_N$.  Let $K_N$ denote the kernel of $U_N$ on $\Z^d$.  For each $m\in\Z^d$, we have
\begin{align*}
  K_N(-m)
  &=\cMSW\sum_{0\leq a'<Q}\sum_{\ell'\in\Z_Q^d}G(a',\ell',Q)
    \int_{\T^d}\widetilde\psi_Q(\xi-\ell'/Q)e^{-2\pi i m\cdot\xi}\dd\xi \\
  &=\cMSW\mathcal{F}^{-1}\left[{\widetilde\psi_Q}\right](-m)
    \sum_{0\leq a'<Q}\sum_{\ell'\in\Z_Q^d}G(a',\ell',Q)e^{-2\pi i m\cdot\ell'/Q} \\
  &=\cMSW\mathcal{F}^{-1}\left[{\widetilde\psi_Q}\right](-m)\cdot Q\,\mathbf 1_{\{|m|^2\equiv0\pmod Q\}},
\end{align*}
where $\mathcal{F}^{-1}\left[{\widetilde\psi_Q}\right]$ is the inverse Fourier coefficient of ${\widetilde\psi_Q}$. Indeed, from the definition of $  G(a',\ell',Q)$ and the orthogonality of characters on \(\Z_Q^d\),
we have
\begin{align*}
\sum_{0\le a'<Q}\sum_{\ell'\in\Z_Q^d}
  G(a',\ell',Q)e_Q(-m\cdot\ell')&=
Q^{-d}
\sum_{0\le a'<Q}\sum_{\ell'\in\Z_Q^d}\sum_{n\in\Z_Q^d}
e_Q(a'|n|^2)
e_Q((n-m)\cdot\ell')\\
&=Q^{1-d}
\sum_{n\in\Z_Q^d}\sum_{\ell'\in\Z_Q^d}e_Q((n-m)\cdot\ell')
\cdot \mathbf 1_{\{|n|^2\equiv 0\pmod Q\}}\\
&=
Q\,\mathbf 1_{\{|m|^2\equiv0\pmod Q\}}.
\end{align*}

Since the cutoff function ${\widetilde\psi_Q}$ is smooth at frequency scale $Q^{-1}$, for every $M>d$,
\begin{align*}\left|\mathcal{F}^{-1}\left[{\widetilde\psi_Q}\right](m)\right|
  \lesssim_M Q^{-d}\left(1+\frac{|m|}{Q}\right)^{-M}.
\end{align*}
Therefore, for each $M>d$
\begin{equation}\label{eq:UN-kernel-bound}
  \|K_N\|_{\ell^1}
  \lesssim_M Q^{1-d}
  \sum_{m\in\Z^d}\left(1+\frac{|m|}{Q}\right)^{-M}
  \mathbf 1_{\{|m|^2\equiv0\pmod Q\}}.
\end{equation}
For $s\geq0$, the number of $m$ with $|m|\leq 2^{s+1}Q$ and $Q\mid |m|^2$ is bounded by
\[
  1+\sum_{1\leq j\leq 2^{2s+2}Q} r_d(jQ)
  \lesssim_d 1+\sum_{1\leq j\leq2^{2s+2}Q}(jQ)^{d/2-1}
  \lesssim_d 2^{sd}Q^{d-1},
\]
where $r_d(n)=\#\{m\in\Z^d:|m|^2=n\}$ and we used the standard bound
$r_d(n)\lesssim_d n^{d/2-1}$ for $d\geq5$; see, for example, \cite{MR0097357}.  Splitting the sum in \eqref{eq:UN-kernel-bound} into 
the ball $|m|\leq Q$ and the annuli
$2^sQ<|m|\leq2^{s+1}Q$ gives
\[
  \|K_N\|_{\ell^1}
  \lesssim_{d,M}1+\sum_{s\geq0}2^{-sM}2^{sd}
  \lesssim_d1,
\]
after choosing $M>d$.  Young's inequality gives \eqref{eq:U-bound} for every $1\leq p\leq\infty$, in particular for $1\leq p\leq2$.
\end{proof}

\subsubsection{A quantitative superlacunary continuous estimate}
Let $\MHL$ denote the Hardy--Littlewood maximal operator.  For $r>0$, $d\sigma_r$ denotes normalized Euclidean surface measure on
$\{x\in\R^d:|x|=r^{1/2}\}$.  If
$\mathcal R=\{r_j:j\in J\}$ is a finite increasing family, write
\[
  \mathcal M_{\mathcal R}F(x)
  :=\sup_{j\in J}|d\sigma_{r^2_j}\bigstar F(x)|.
\]
The proof of the main theorem in this section is motivated by \cite{MR0537803}.
\begin{theorem}
\label{thm:superlac}~
Let $d\geq5$, let $L\geq1$ be an integer, and suppose that
\begin{align}\label{cond:superlac}
r_{j+1}/r_j\geq2^L
\end{align}
for consecutive radii in $\mathcal R$.  Then, for every $1<p\leq2$,
\begin{equation}\label{eq:superlac-main}
  \|\mathcal M_{\mathcal R}F\|_{L^p(\R^d)}
  \lesssim_d p'
  \left(1+\frac{1}{((p-1)L)^2}\right)
  \|F\|_{L^p(\R^d)},
  \quad\text{where}\,\, p'=\frac{p}{p-1}.
\end{equation}
The constant is independent of $L$, of $\mathcal R$, and of its
cardinality. 
\end{theorem}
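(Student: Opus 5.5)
We prove \eqref{eq:superlac-main} by a Littlewood--Paley decomposition of $d\sigma_{r^2_j}$, in the spirit of \cite{MR0537803}, run so that the constants in $p$ and $L$ stay explicit. We read $d\sigma_{r_j^2}$ as the normalized surface measure on the sphere of radius $r_j$; its Fourier transform then oscillates at frequency scale $r_j^{-1}$.

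\textbf{Decomposition.} Fix a smooth radial bump $\phi$ with $\widetilde\phi=1$ near the origin. Write
\[
 d\sigma_{r^2_j}=\phi_{r_j}\bigstar d\sigma_{r^2_j}+\sum_{s\geq1}\sigma_{j,s},
 \qquad
 \widetilde{\sigma_{j,s}}(\xi)=\widetilde{d\sigma_{r_j^2}}(\xi)\bigl[\widetilde\phi(2^{-s}r_j\xi)-\widetilde\phi(2^{-s+1}r_j\xi)\bigr],
\]
where $\phi_r(x)=r^{-d}\phi(x/r)$. The first term is dominated pointwise by $\MHL F$, whose $L^p$ norm is $\lesssim_d p'\|F\|_p$; this accounts for the leading factor $p'$.

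\textbf{Estimates for each piece $s\geq1$.} We establish two bounds.
\begin{itemize}
\item $L^2$: by stationary phase, $|\widetilde{\sigma_{j,s}}|\lesssim 2^{-s(d-1)/2}$. The supports of $\widetilde{\sigma_{j,s}}$, namely $|\xi|\sim 2^s/r_j$, are disjoint in $j$ once $2^L\geq 4$. Using $\sup_j\leq\ell^2_j$ and Plancherel, we get $\|\sup_j|\sigma_{j,s}\bigstar F|\|_2\lesssim 2^{-s(d-1)/2}\|F\|_2$.
\item $L^1\to L^{1,\infty}$: the kernel $\sigma_{j,s}$ is an $L^1$-normalized bump of width $2^{-s}r_j$ around the sphere of radius $r_j$. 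Hence $\sup_j|\sigma_{j,s}\bigstar F|\lesssim 2^{s}\MHL F$, giving weak $(1,1)$ with constant $\lesssim 2^{s}$.
\end{itemize}
Marcinkiewicz interpolation, with its explicit $(p-1)^{-1}$ blow-up, turns these two bounds into the $L^p$ estimate. Summing the resulting geometric series in $s$ gives a constant that is uniform in $L$ only for $p$ above some $p_d<2$. That is not enough: the target requires the constant $1+((p-1)L)^{-2}$ down to $p\to1$. This is the main obstacle, and superlacunarity has to be exploited more cleverly.

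\textbf{Using superlacunarity.} Split each piece by scale relative to $L$:
\begin{itemize}
\item \emph{Small $s$} ($2^s\lesssim 2^{\theta L}$ for a small fraction $\theta$): the widths $2^{-s}r_j$ stay superlacunary. We run a Calder\'on--Zygmund argument on the vector-valued operator $F\mapsto(\sigma_{j,s}\bigstar F)_j$ into $\ell^\infty_j$, after subtracting the main bump term. The H\"ormander condition then costs only $\sum_j\min(2^s r_j/|y|,\,|y|/(2^{-s}r_j))$. This sum stays bounded, and the separation $2^L$ makes the contributions of scales $r_j$ far from $|y|$ decay geometrically in $L$.
\item \emph{Large $s$} ($s\geq\theta L$): we use the interpolated bound $2^{s(1-p'')}$-type decay coming from the $L^2$ gain $2^{-s(d-1)/2}$. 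With $d\geq5$, interpolating at exponent $1/p=1-\vartheta/2$ gives decay like $2^{-s(\vartheta(d-1)/2-(1-\vartheta))}$, where $\vartheta\sim p-1$. The tail $\sum_{s\geq\theta L}$ then contributes $\lesssim((p-1)\theta L)^{-1}$ times the Marcinkiewicz factor $(p-1)^{-1}$. More precisely, the geometric sum $\sum_{s\ge\theta L}2^{-c(p-1)s}\lesssim(1+((p-1)L)^{-1})\,(p-1)^{-1}$ combined carefully should produce $p'(1+((p-1)L)^{-2})$.
\end{itemize}

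\textbf{Assembly.} Adding the Hardy--Littlewood term, the small-$s$ Calder\'on--Zygmund part (constant $\lesssim p'$) and the large-$s$ tail yields \eqref{eq:superlac-main}. The delicate step is the bookkeeping of exponents in the large-$s$ tail, so that exactly the factor $((p-1)L)^{-2}$ emerges. If the direct interpolation loses a power, the fallback is to interpolate instead between the $L^2$ bound and a weak-$L^1$ bound of size $s\,2^{s}/L$, obtained from the vector-valued Calder\'on--Zygmund theory for the whole superlacunary family at a fixed $s$.
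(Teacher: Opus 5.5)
There is a genuine gap, and it sits in the step you flag as delicate: the summation over the dyadic scales $s$.

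First, the large-$s$ estimate as written does not decay. Interpolating the weak-type constant $2^{s}$ (from $\sup_j|\sigma_{j,s}\bigstar F|\lesssim 2^{s}\MHL F$) with the $L^2$ constant $2^{-s(d-1)/2}$ at $1/p=1-\vartheta/2$ gives $2^{s(1-\vartheta(d+1)/2)}$. This grows in $s$ once $\vartheta<2/(d+1)$, i.e.\ for $p$ near $1$. It is the same obstruction you noted one paragraph earlier, so the series $\sum_{s\geq\theta L}2^{-c(p-1)s}$ is not justified, and your fallback weak bound $s2^{s}/L$ has the same defect.

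The correct fixed-$s$ weak-type input is the Calder\'on--Zygmund one. The H\"ormander integral for $(\sigma_{j,s})_j$ is $\lesssim\sum_{r_j>c|y|}\min\{1,2^{s}|y|/r_j\}$ plus rapidly decaying tails, i.e.\ $O(1+s/L)$. This does give $\|\sup_j|\sigma_{j,s}\bigstar F|\|_p\lesssim p'(1+s/L)2^{-c(p-1)s}\|F\|_p$ for every $s$. But each such bound carries its own Marcinkiewicz factor $p'$, and summing the decay over $s$ costs a second factor $(p-1)^{-1}$. You get $p'\sum_{s\geq1}(1+s/L)2^{-c(p-1)s}\simeq p'\bigl[(p-1)^{-1}+L^{-1}(p-1)^{-2}\bigr]$, which for $(p-1)L\simeq1$ is of order $(p')^2$, not $p'$. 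Your small-$s$ range has the same problem: its $\theta L$ pieces, each costing $\simeq p'$, sum to $\simeq p'\min\{\theta L,(p-1)^{-1}\}$.

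This loss is not cosmetic. Lemma~\ref{prop:shell} uses $p_j-1\simeq 1/j$ with $(p_j-1)L_{N_j}\in[3,6)$, and a constant $(p_j')^2$ there would give only $\ell(\log\ell)^2$.

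The missing idea is to group the dyadic scales into blocks of length $L$ and run the weak-type argument on each block as a single telescoped kernel. In your notation, set $K_{r_j,b}:=\sum_{bL<s\leq(b+1)L}\sigma_{j,s}=(\phi_{2^{-(b+1)L}r_j}-\phi_{2^{-bL}r_j})\bigstar d\sigma_{r_j^2}$. The paper uses $\tau=d\sigma_1-\omega\,dx$ and a compactly supported $\Phi$ instead, which changes nothing essential. This block kernel has three properties:
\begin{enumerate}
\item by telescoping, its $L^1$ norm is $O(1)$ rather than the $O(L)$ you get by adding its pieces;
\item its gradient has $L^1$ norm $\lesssim 2^{(b+1)L}/r_j$;
\item the $2^L$-separation leaves only $O(1+b)$ radii in $(c|y|,2^{(b+1)L}|y|]$.
\end{enumerate}
Hence the H\"ormander constant of $\sum_j\varepsilon_jK_{r_j,b}$ is $O(1+b)$, and the multiplier sum gives an $L^2$ bound $\lesssim 2^{-bL}$. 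Marcinkiewicz interpolation and Khintchine's inequality then give $p'(1+b)e^{-c(p-1)Lb}$ for the $b$-th block. By contrast, adding its $L$ pieces separately costs an extra factor $\min\{L,(p-1)^{-1}\}$. Finally, $\sum_{b\geq0}(1+b)e^{-c(p-1)Lb}\lesssim 1+((p-1)L)^{-2}$, and this double sum is exactly where the square in the statement comes from.

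Your small-$s$ range can be saved in this form: treat $\sum_{s\leq\theta L}\sigma_{j,s}$ as one kernel, which is the paper's $b=0$ block. The large-$s$ range needs the same blocking rather than a per-scale tail sum.
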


\begin{proof}
We prove (\ref{eq:superlac-main}) for  $F\in\mathcal S(\R^d)$.  Fix nonnegative functions
$\omega,\Phi\in C_c^\infty(\R^d)$ such that
\[
  \int_{\R^d}\omega(x)\dd x
  =\int_{\R^d}\Phi(x)\dd x=1,
\]
and write $h_s(x)=s^{-d}h(x/s)$ for all $s>0$.  Define
\begin{align*}
    \tau=d\sigma_1-\omega(x)\dd x,
  \quad\text{and}\quad
  \tau_r=d\sigma_{r^2}-\omega_r(x)\dd x.
\end{align*}
Then, from the definition, $\tau(\R^d)=0$ and $\tau_r$ is the pushforward of $\tau$ under
$Dil_r:x\mapsto rx$.

For $b\geq0$, let $ K_{r,b}$ be the kernel given by
\begin{equation}\label{eq:superlac-block-kernel}
  K_{r,b}
  :=\bigl(\Phi_{r2^{-(b+1)L}}-\Phi_{r2^{-bL}}\bigr)\bigstar\tau_r.
\end{equation}
Since $\Phi_s$ is an approximate identity as $s\downarrow0$, telescoping in
$\mathcal S'(\R^d)$ gives
\begin{align}\label{eq:superlac-telescope}
  d\sigma_{r^2}
  =\left[\omega_r(x)\dd x+\Phi_r\bigstar\tau_r\right]
   +\sum_{b=0}^{\infty}K_{r,b}:=L_r +\sum_{b=0}^{\infty}K_{r,b},
\end{align}
where $L_r:=\omega_r+\Phi_r\bigstar\tau_r$ satisfies
$|L_r(x)|\lesssim_d r^{-d}\mathbf1_{\{|x|\leq C_dr\}}$ for some $C_d>0$.
Consequently, the maximal operator $ \sup_{r>0}|L_r\bigstar F|$ is dominated by the Hardy--Littlewood maximal function and hence
\begin{equation}\label{eq:superlac-low-HL}
  \left\|\sup_{r>0}|L_r\bigstar F|\right\|_{L^p(\R^d)}
  \lesssim_d p'\|F\|_{L^p(\R^d)}.
\end{equation}
\smallskip

Next, we will estimate the $L^p$-norm of $\sup_{j\in J}|K_{r_j,b}\bigstar F|$. We randomize it and deduce its endpoint estimate via the Calder\'on--Zygmund decomposition.

For signs $\varepsilon_j\in\{-1,1\}$, define
$
  T_{b,\varepsilon}F
  :=\sum_{j\in J}\varepsilon_jK_{r_j,b}\bigstar F.$
We claim that the following $L^2$ estimate for $ T_{b,\varepsilon}F$ is uniform in the signs and in $J$:
\begin{equation}\label{eq:superlac-block-L2}
  \|T_{b,\varepsilon}F\|_2
  \lesssim_d2^{-bL}\|F\|_2,
\end{equation}
 To derive the claim, write
\[
  \Theta_{b,L}(\zeta)
  :=\widetilde\Phi(2^{-(b+1)L}\zeta)
    -\widetilde\Phi(2^{-bL}\zeta).
\]
Let $m_{j,b}$ be the multiplier of $K_{r_j,b}$, then $m_{j,b}(\xi)=\Theta_{b,L}(r_j\xi)\cdot \widetilde\tau(r_j\xi).$
The cancellation property of $\tau$ and stationary phase imply, since $d\geq5$, that
\begin{equation}\label{eq:superlac-tau-fourier}
  |\widetilde\tau(\zeta)|
  \lesssim_d\min\bigl\{|\zeta|,(1+|\zeta|)^{-2}\bigr\}.
\end{equation}
Besides, for every $M_0\geq1$, smoothness and rapid decay of $\widetilde\Phi$ give
\begin{equation}\label{eq:superlac-Theta-bounds}
  |\Theta_{b,L}(\zeta)|
  \lesssim_{M_0}\min\left\{
      2^{-bL}|\zeta|,\ 1,\
      \bigl(2^{-(b+1)L}|\zeta|\bigr)^{-M_0}
  \right\}.
\end{equation}
Fix $\xi\neq0$ and put $u_j=r_j|\xi|$.  Remark that, from the assumption, the sequence $\{u_j\}$ is
$2^L$-lacunary.  On one hand, from
\eqref{eq:superlac-tau-fourier} and \eqref{eq:superlac-Theta-bounds}, we have
\begin{align*}
  \sum_{u_j\leq1}|m_{j,b}(\xi)|
  \lesssim_d2^{-bL}\sum_{u_j\leq1}u_j^2
   \lesssim_d2^{-bL},
\end{align*}
and
\begin{align*}
    \sum_{1<u_j\leq2^{bL}}|m_{j,b}(\xi)|
  &\lesssim_d2^{-bL}\sum_{1<u_j\leq2^{bL}}u_j^{-1}
   \lesssim_d2^{-bL};
\end{align*}
on the other hand, there are at most two indices for which
$2^{bL}<u_j\leq2^{(b+1)L}$, 
hence
\begin{align*}
  &\sum_{2^{bL}\leq u_j\leq2^{(b+1)L}}|m_{j,b}(\xi)|+\sum_{u_j>2^{(b+1)L}}|m_{j,b}(\xi)|\\
  &\lesssim_d 2^{-2bL}+ 2^{-2(b+1)L}
  \sum_{u_j>2^{(b+1)L}}
  \left(\frac{u_j}{2^{(b+1)L}}\right)^{-4}\lesssim_d2^{-2bL}.
\end{align*}
Thus, we have the estimate that
\begin{equation}\label{eq:superlac-multiplier-sum}
  \sup_{\xi\in\R^d}\sum_{j\in J}|m_{j,b}(\xi)|
  \lesssim_d2^{-bL},
\end{equation}
and therefore \eqref{eq:superlac-block-L2} follows from Plancherel's theorem.
\smallskip

Now, with the $L^2$ estimate for $T_{b,\varepsilon}$, we are ready to perform the standard Calder\'on-Zygmund mechanism. Indeed, we assert that
\begin{equation}\label{eq:superlac-block-weak11}
  \|T_{b,\varepsilon}F\|_{L^{1,\infty}}
  \lesssim_d(1+b)\|F\|_1.
\end{equation}
By Young's inequality and the scaling, we get that
\begin{equation}\label{eq:superlac-kernel-basic}
  \|K_{r,b}\|_1\lesssim_d1,
  \qquad
  \|\nabla K_{r,b}\|_1\lesssim_d\frac{2^{(b+1)L}}{r},
  \quad\text{and}\quad
  \supp K_{r,b}\subset B(0,C_dr),
\end{equation}
which implies, for every $y\in\R^d$,
\begin{equation}\label{eq:superlac-translation}
  \int_{\R^d}|K_{r,b}(x-y)-K_{r,b}(x)|\dd x
  \lesssim_d\min\left\{1,\frac{2^{(b+1)L}|y|}{r}\right\}.
\end{equation}
Let $K_{b,\varepsilon}=\sum_j\varepsilon_jK_{r_j,b}$.  If $C_0$ is
sufficiently large, the support estimate in
\eqref{eq:superlac-kernel-basic} shows that the terms with
$r_j\leq c_d|y|$ vanish in the integral over $|x|>C_0|y|$.  Therefore
\begin{align}
 \int_{|x|>C_0|y|}
  |K_{b,\varepsilon}(x-y)-K_{b,\varepsilon}(x)|\dd x\lesssim_d
  \sum_{r_j>c_d|y|}
  \min\left\{1,\frac{2^{(b+1)L}|y|}{r_j}\right\}.
  \label{eq:superlac-Hormander-sum}
\end{align}
From the hypothesis (\ref{cond:superlac}), there are $O_d(1+b)$ radii in the intermediate range
$c_d|y|<r_j\leq2^{(b+1)L}|y|$, while the remaining terms form a geometric
series.  Consequently,
\begin{equation}\label{eq:superlac-Hormander}
  \sup_{y\in\R^d}
  \int_{|x|>C_0|y|}
  |K_{b,\varepsilon}(x-y)-K_{b,\varepsilon}(x)|\dd x
  \lesssim_d1+b.
\end{equation}
With (\ref{eq:superlac-Hormander}) in hand, the assertion
 (\ref{eq:superlac-block-weak11}) follows from the Calder\'on--Zygmund theory. For
 completeness, apply the Calder\'on--Zygmund decomposition to $F$ at height $\alpha$, that is
\[
  F=g+\sum_{I\in\mathcal I}b_I,
\]
such that $\|g\|_2^2\lesssim_d\alpha\|F\|_1$,
$\sum_{I\in\mathcal I}|I|\lesssim_d\alpha^{-1}\|F\|_1$,
$\int b_I=0$, and
$\sum_I\|b_I\|_1\lesssim_d\|F\|_1$.  Since
$2^{-bL}\leq1$, \eqref{eq:superlac-block-L2} and Chebyshev's inequality
give
\[
  |\{|T_{b,\varepsilon}g|>\alpha/2\}|
  \lesssim_d\alpha^{-2}\|g\|_2^2
  \lesssim_d\alpha^{-1}\|F\|_1.
\]
Let $I^*$ be a fixed sufficiently large concentric dilate of $I$, then the
measure of $\bigcup_I I^*$ is $O_d(\alpha^{-1}\|F\|_1)$.  If $c_I$
denotes the center of $I$, then, for $x\notin I^*$, the cancellation property of $b_I$ gives
\[
  T_{b,\varepsilon}b_I(x)
  =\int_I\bigl[K_{b,\varepsilon}(x-y)
       -K_{b,\varepsilon}(x-c_I)\bigr]b_I(y)\dd y.
\]
After translating $x$ by $c_I$, the region $x\notin I^*$ lies in
$|x-c_I|>C_0|y-c_I|$.  Hence \eqref{eq:superlac-Hormander}, Fubini's
theorem, and the preceding bounds for the bad functions imply
\[
  \int_{(\cup_I I^*)^c}
  \left|T_{b,\varepsilon}\left(\sum_Ib_I\right)(x)\right|\dd x
  \lesssim_d(1+b)\sum_I\|b_I\|_1
  \lesssim_d(1+b)\|F\|_1.
\]
The application of Chebyshev's inequality now proves \eqref{eq:superlac-block-weak11}, as desired.
\smallskip

With the $L^2$ estimate \eqref{eq:superlac-block-L2} and the weak type estimates \eqref{eq:superlac-block-weak11} for the randomized operator $T_{b,\varepsilon}$, we perform the interpolation to derive the $L^p$ bound of $T_{b,\varepsilon}$ for each $1<p\leq2$.
For $1<p\leq2$, set $\theta_p=2(p-1)/p$, so that $$1/p=(1-\theta_p)+\theta_p/2.$$
For $1<p\leq3/2$, Marcinkiewicz interpolation \cite[Chapter~1]{MR2445437} between
\eqref{eq:superlac-block-weak11} and \eqref{eq:superlac-block-L2} gives
\begin{align*}
  \|T_{b,\varepsilon}\|_{L^p\to L^p}&\lesssim\left[p'+p/(2-p)\right]\cdot(1+b)^{1-\theta_p}2^{-bL\theta_p}\\
  &\lesssim_d p'(1+b)^{1-\theta_p}2^{-bL\theta_p}\quad\forall\,\,1<p\leq 3/2;
\end{align*}
here the interpolation constant is $O(p')$ because $2-p\geq1/2$ in
this range.\\
Suppose $3/2<p\leq2$, then we apply Riesz--Thorin
interpolation between the estimate 
\[
  \|T_{b,\varepsilon}\|_{L^{3/2}\to L^{3/2}}
  \lesssim_d 3(1+b)^{1/3}2^{-bL2/3}
\]
and the $L^2$-estimate (\ref{eq:superlac-block-L2}) to derive that $ \|T_{b,\varepsilon}\|_{L^p\to L^p}$ is dominated by $O((1+b)^{1-\theta_p}\cdot2^{-bL\theta_p})$. Since  the remaining numerical
constant is bounded and may be absorbed into $p'$ and 
$\theta_p\geq p-1$, the interpolation arguments show that
\begin{equation}\label{eq:superlac-randomized-Lp}
  \|T_{b,\varepsilon}F\|_{L^p}
  \lesssim_d p'(1+b)e^{-c_d(p-1)Lb}\|F\|_{L^p},\quad\forall\,\,1<p\leq2.
\end{equation}

To estimate the $L^p$-norm of $\sup_{j\in J}|K_{r_j,b}\bigstar F|$, Khintchine's inequality together with
\eqref{eq:superlac-randomized-Lp} yields that
\begin{align}
 \left \|\sup_{j\in J}|K_{r_j,b}\bigstar F|\right\|_{L^p}
  &\leq
  \left\|\left(\sum_{j\in J}|K_{r_j,b}\bigstar F|^2\right)^{1/2}\right\|_{L^p}\lesssim_d p'(1+b)e^{-c_d(p-1)Lb}\|F\|_{L^p},
  \label{eq:superlac-block-Lp}
\end{align}
for all $1<p\leq2$.
Moreover, 
combining (\ref{eq:superlac-low-HL}), (\ref{eq:superlac-block-Lp}), the decomposition (\ref{eq:superlac-telescope}) and the fact that
\[
  \sum_{b=0}^{\infty}(1+b)e^{-c_d(p-1)Lb}
  =\frac1{(1-e^{-c_d(p-1)L})^2}
  \lesssim_d1+\frac{1}{((p-1)L)^2},
\]
we obtain (\ref{eq:superlac-main})
, completing the proof.
\end{proof}
\smallskip

\begin{proposition}\label{boundforB}
Let $d\geq5$, $N\geq2$, and $\lambda_k=\mu_k!$, where the positive integers
$\mu_k$ are strictly increasing. Set
\begin{equation}\label{eq:LN}
  L_N=\left\lfloor\frac32\log_2N\right\rfloor.
\end{equation}
Then, for every $1<p\leq2$,
\begin{equation}\label{eq:good-tail}
  \left\|\sup_{k:\,\mu_k\geq N^3}|B_{\lambda_k}f|\right\|_{\ell^p}
  \lesssim_d p'
  \left(1+\frac1{((p-1)L_N)^2}\right)\|f\|_{\ell^p}.
\end{equation}
The constant is independent of $N$ and of the sequence.
\end{proposition}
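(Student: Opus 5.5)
The plan is to use the factorization \eqref{eq:factor}, $B_{\lambda_k}=T_{\lambda_k}\circ U_N$. By Lemma~\ref{lem:U-bound}, $\|U_Nf\|_{\ell^p}\lesssim_d\|f\|_{\ell^p}$ uniformly in $N$. It therefore suffices to prove
\[
 \Bigl\|\sup_{k:\,\mu_k\geq N^3}|T_{\lambda_k}g|\Bigr\|_{\ell^p}\lesssim_d p'\Bigl(1+\frac1{((p-1)L_N)^2}\Bigr)\|g\|_{\ell^p}.
\]
As in the preliminaries, I first work with a finite index set $J\subset\{k:\mu_k\geq N^3\}$ and pass to the full supremum by monotone convergence.

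The multiplier $t_\lambda$ in \eqref{eq:t-R} has the form $m^Q_{per}$, where $m(\xi)=\widetilde\psi_{2Q}(\xi)\widetilde{d\sigma_\lambda}(\xi)$ is supported in $|\xi|\le 1/(4Q)\subset(-1/(2Q),1/(2Q)]^d$. I will apply Proposition~\ref{sampling} with $q=Q$, $B_1=\mathbb C$, and $B_2=\ell^\infty(J)$ (finite dimensional). The vector multiplier is $\xi\mapsto(\widetilde\psi_{2Q}(\xi)\widetilde{d\sigma_{\lambda_k}}(\xi))_{k\in J}$. This reduces matters to the continuous maximal bound
\[
 \Bigl\|\sup_{k\in J}|\Psi^{(Q)}\bigstar d\sigma_{\lambda_k}\bigstar F|\Bigr\|_{L^p(\R^d)}\lesssim\cdots,
\]
where $\Psi^{(Q)}$ is the kernel with Fourier transform $\widetilde\psi_{2Q}$. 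Since $\Psi^{(Q)}\bigstar F$ is bounded on $L^p$ uniformly (it is an $L^1$-normalized dilate), I bound this by $\|\Psi^{(Q)}\|_{L^1}$ times the $L^p$ norm of $\mathcal M_{\mathcal R}F$. Here $\mathcal R=\{\lambda_k^{1/2}:k\in J\}$ with the convention $d\sigma_{r^2}$ as in Theorem~\ref{thm:superlac}.

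It then remains to verify the superlacunarity hypothesis \eqref{cond:superlac} with $L=L_N$. For consecutive $k$ with $\mu_k\geq N^3$,
\[
 \frac{\lambda_{k+1}^{1/2}}{\lambda_k^{1/2}}\geq\Bigl(\frac{(\mu_k+1)!}{\mu_k!}\Bigr)^{1/2}=(\mu_k+1)^{1/2}\geq N^{3/2}\geq 2^{L_N}.
\]
This holds by the definition \eqref{eq:LN}, since $2^{\lfloor\frac32\log_2N\rfloor}\le N^{3/2}$. Theorem~\ref{thm:superlac} then gives the bound $p'(1+((p-1)L_N)^{-2})$, uniformly in $J$, $N$ and the sequence. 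If $L_N=0$ cannot occur, this is fine; for $N\geq2$ we have $L_N\geq1$, as required by the theorem.

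The main obstacle is justifying the sampling step cleanly. Proposition~\ref{sampling} needs the continuous operator with multiplier $m$ (including the cutoff $\widetilde\psi_{2Q}$) to be bounded $L^p\to L^p_{\ell^\infty(J)}$. The cutoff must be absorbed without loss: its kernel $Q^{-d}\check\psi(\cdot/(2Q))$-type function has $L^1$ norm $O(1)$ independent of $Q$. Convolution by it commutes with the spheres and preserves $L^p$, so the maximal bound for $\sup_k|d\sigma_{\lambda_k}\bigstar(\Psi^{(Q)}\bigstar F)|$ follows from Theorem~\ref{thm:superlac} applied to $\Psi^{(Q)}\bigstar F$. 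I would also check that the constant from Proposition~\ref{sampling} is independent of $Q$ and $p$, which the statement guarantees. Combining these with Lemma~\ref{lem:U-bound} yields \eqref{eq:good-tail}.
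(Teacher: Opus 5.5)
Your proposal is correct and follows the paper's proof essentially step for step. The ingredients match: the factorization \eqref{eq:factor} together with Lemma~\ref{lem:U-bound}, Proposition~\ref{sampling} with $q=Q$, $B_1=\mathbb C$, $B_2=\ell^\infty(J)$ for finite $J$, absorption of the cutoff $\widetilde\psi_{2Q}$ by Young's inequality uniformly in $Q$, and Theorem~\ref{thm:superlac} after checking $\lambda_{k+1}^{1/2}/\lambda_k^{1/2}\geq(\mu_k+1)^{1/2}\geq 2^{L_N}$. Your extra checks (that $L_N\geq1$ for $N\geq2$, and that the sampling constant does not depend on $p$ or $Q$) are exactly what keeps the bound uniform in $N$ and in the sequence.
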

\begin{proof}
Remark that, from the choice of $L_N$ (\ref{eq:LN}), for all $\mu_k\geq N^3$
\[
  \frac{\lambda_{k+1}^{1/2}}{\lambda_{k}^{1/2}}
  =\left(\frac{\mu_{k+1}!}{\mu_{k}!}\right)^{1/2}
  \geq(
  \mu_k+1)^{1/2}
  \geq(N^3+1)^{1/2}
  \geq2^{L_N}.
\]
Let $J\subset \{k>0:\,\mu_k\geq N^3\}$ be a given finite set. 
For Proposition~\ref{sampling}, take
\[
 q=Q,\qquad B_1=\mathbb C,\qquad B_2=\ell^\infty(J),
\]
and the finite vector multiplier
\[
 m_J(\eta)z
 :=\left\{\widetilde\psi_{2Q}(\eta)
 \widetilde{d\sigma_{\lambda_k}}(\eta)z\right\}_{k\in J}.
\]
Writing
$P_QF=\mathcal F_{\R^d}^{-1}
[\widetilde\psi_{2Q}\widetilde F]$, its continuous operator is
$\{d\sigma_{\lambda_k}\bigstar P_QF\}_{k\in J}$. The multiplier is
supported in
$\{|\eta|\leq1/(4Q)\}\subset(-1/(2Q),1/(2Q)]^d$, its
$Q$-periodization has components $t_{\lambda_k}$, and Young's inequality
gives $\|P_Q\|_{L^p\to L^p}\lesssim_d1$, independently of $Q$.
Therefore Theorem~\ref{thm:superlac}, Proposition~\ref{sampling}, and
\eqref{eq:factor} give
\[
 \left\|\sup_{k\in J}|B_{\lambda_k}f|\right\|_{\ell^p}
 \lesssim_d p'\left(1+\frac1{((p-1)L_N)^2}\right)
 \|(U_Nf)_{ext}\|_{L^p},
\]
uniformly in $J$ and $\#J$. Now \eqref{contidiscrete},
Lemma~\ref{lem:U-bound}, and the standing finite-supremum convention give
\eqref{eq:good-tail}.
\end{proof}

\subsection{The effective MSW decomposition}
We now summarize the decomposition and estimates established above.

\begin{proposition}\label{prop:klm-decomp}
Let $d\geq5$, $N\geq2$, and $\lambda_k=\mu_k!$, where the positive integers
$\mu_k$ are strictly increasing. Set $L_N=\left\lfloor\frac32\log_2N\right\rfloor$. 
There exist nonnegative-valued sublinear operators $A^{(1)}_{\star,N}$ and $A^{(2)}_{\star,N}$ such that for every finitely supported $f$,
\begin{equation}\label{eq:good-bad}
  A_\star f\leq A^{(1)}_{\star,N}f+A^{(2)}_{\star,N}f,
\end{equation}
and, for $1<p\leq2$,
\begin{align}
  \|A^{(1)}_{\star,N}f\|_{\ell^p}
  &\lesssim_d
  \left[
    \Nmu(N^3)^{1/p}
    +p'\left(1+\frac1{((p-1)L_N)^2}\right)
  \right]\|f\|_{\ell^p},\label{eq:G-bound}\\
  \|A^{(2)}_{\star,N}f\|_{\ell^2}
  &\lesssim_dN^{-(d-4)/2}\|f\|_{\ell^2}.\label{eq:H-bound}
\end{align}
\end{proposition}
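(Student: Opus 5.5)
The plan is to split the index set of the maximal function at $\mu_k=N^3$ and treat the head and the tail separately. Write $A_\star f\le A_{\star,\mathrm{head}}f+A_{\star,\mathrm{tail}}f$, where the head is the supremum over $k$ with $\mu_k<N^3$ and the tail is the supremum over $k$ with $\mu_k\ge N^3$. By positivity of the $A_{\lambda_k}$ we may replace $f$ by $|f|$ throughout. The number of head indices is at most $\Nmu(N^3)$.

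\emph{Head.} Bound the supremum by the $\ell^p$ sum of the terms:
\[
\sup_{\mu_k<N^3}|A_{\lambda_k}f|\le\Big(\sum_{\mu_k<N^3}|A_{\lambda_k}f|^p\Big)^{1/p}.
\]
Each $A_{\lambda_k}$ is an average against a probability measure, so it is a contraction on $\ell^p$. Hence the $\ell^p$ norm of the head is at most $\Nmu(N^3)^{1/p}\|f\|_{\ell^p}$. This head term is placed inside $A^{(1)}_{\star,N}$.

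\emph{Tail.} Write $A_{\lambda_k}=\rho_{\lambda_k}\overline A_{\lambda_k}$ with $\rho_{\lambda_k}\le R_d^{\rm norm}$. Then for $\mu_k\ge N^3$,
\[
|A_{\lambda_k}f|\le R_d^{\rm norm}\bigl(|B_{\lambda_k}f|+|\overline A_{\lambda_k}f-B_{\lambda_k}f|\bigr).
\]
Define
\[
A^{(1)}_{\star,N}f:=\sup_{\mu_k<N^3}|A_{\lambda_k}f|+R_d^{\rm norm}\sup_{\mu_k\ge N^3}|B_{\lambda_k}f|,
\qquad
A^{(2)}_{\star,N}f:=R_d^{\rm norm}\sup_{\mu_k\ge N^3}|\overline A_{\lambda_k}f-B_{\lambda_k}f|.
\]
Both operators are nonnegative and sublinear, being suprema of absolute values of linear operators, and \eqref{eq:good-bad} holds pointwise.

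\emph{Bounds.} The $B$-part of $A^{(1)}_{\star,N}$ is controlled by Proposition~\ref{boundforB}, which gives the $p'(1+((p-1)L_N)^{-2})$ factor. Together with the head bound this yields \eqref{eq:G-bound}. The bound \eqref{eq:H-bound} is exactly Lemma~\ref{lem:tail-error}, up to the constant $R_d^{\rm norm}$. Both estimates are first proved for finite index sets $J$ with constants uniform in $J$, then passed to the full supremum by monotone convergence as in the standing convention.

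The main subtlety is whether sublinearity forces $A^{(1)}_{\star,N}$ and $A^{(2)}_{\star,N}$ to be independent of $f$, e.g.\ whether we should avoid putting $|f|$ inside them. Since \eqref{eq:good-bad} is pointwise, we can define the operators with $f$ itself and use $|A_{\lambda_k}f|\le R_d^{\rm norm}|\overline A_{\lambda_k}f|$ directly. This makes the replacement by $|f|$ unnecessary, and no real obstacle remains beyond the already proved lemmas.
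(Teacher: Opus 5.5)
Your proposal is correct and follows the paper's proof essentially line by line: the same split at $\mu_k=N^3$, the same definitions of $A^{(1)}_{\star,N}$ and $A^{(2)}_{\star,N}$ via $A_{\lambda_k}=\rho_{\lambda_k}\overline A_{\lambda_k}$ and $B_{\lambda_k}$, the same $\ell^p$-sum and contraction bound $\Nmu(N^3)^{1/p}$ for the head, and the same appeal to Proposition~\ref{boundforB} and Lemma~\ref{lem:tail-error} for the tail. The detour about replacing $f$ by $|f|$ is unnecessary, as you note yourself, but it is harmless.
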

\begin{proof}

By the triangle inequality and
$A_{\lambda_k}=\rho_{\lambda_k}\overline A_{\lambda_k}$, we have
\begin{align*}
 A_\star f
 &\leq \sup_{\mu_k<N^3}|A_{\lambda_k}f|
      +\sup_{\mu_k\geq N^3}|A_{\lambda_k}f|\\
 &\leq \sup_{\mu_k<N^3}|A_{\lambda_k}f|
 +R_d^{\rm norm}\left(
   \sup_{\mu_k\geq N^3}|B_{\lambda_k}f|
  +\sup_{\mu_k\geq N^3}
   |\overline A_{\lambda_k}f-B_{\lambda_k}f|
 \right).
\end{align*}
Define
\begin{align*}
 A^{(1)}_{\star,N}f
 &:=
 \sup_{\mu_k<N^3}|A_{\lambda_k}f|
 +R_d^{\rm norm}\sup_{\mu_k\geq N^3}|B_{\lambda_k}f|,\\
 A^{(2)}_{\star,N}f
 &:=
 R_d^{\rm norm}\sup_{\mu_k\geq N^3}
 |\overline A_{\lambda_k}f-B_{\lambda_k}f|.
\end{align*}
A supremum over an empty index set is understood to be zero.
Remark that the $\ell^p$-norm of $\sup_{\mu_k<N^3}|A_{\lambda_k}f|$ is dominated by $(\mathcal{N}_\mu(N^3))^{1/p}\cdot\|f\|_{\ell^p}$ for all $1<p\leq 2$. To see this, for all $1<p\leq2$
\begin{align*}
    \left\|\sup_{\mu_k<N^3}|A_{\lambda_k}f|\right\|^p_{\ell^p}&:=\sum_{x\in\Z^d}\left(\sup_{\mu_k<N^3}|A_{\lambda_k}f|(x)\right)^p\\
    &\leq \sum_{x\in\Z^d}\left(\sum_{k:\mu_k<N^3}|A_{\lambda_k}f(x)|^p\right).
\end{align*}
Since $A_{\lambda_k}f$ is the normalized average, 
H\"older's inequality
gives
\begin{align*}
    \left\|\sup_{\mu_k<N^3}|A_{\lambda_k}f|\right\|^p_{\ell^p}
    \leq\sum_{x\in\Z^d}\left(\sum_{k:\mu_k<N^3}\frac{1}{s_{\lambda_k}}\sum_{\substack{n\in\Z^d\\|n|^2=\lambda_k}} |f(x-n)|^p\right)\lesssim_d\mathcal{N}_\mu(N^3)\|f\|^p_{\ell^p},
\end{align*}

As a result, by Lemma \ref{lem:tail-error} and Proposition \ref{boundforB}, the proof is complete.
\end{proof}

\medskip

\section{Finalizing the endpoint estimate}\label{sec:endpoint}
In this section, we finish the proof of the main results.

\subsection{Two elementary inequalities}
We first record two summation inequalities. 

\begin{proposition}\label{lem:positive}
Let $a>0$ and $p>1$. For every nonnegative function $u$ on $\Z^d$,
\begin{equation}\label{eq:positive-norm}
    \sum_{x\in\Z^d}(u(x)-a)_+
    \leq
    a^{1-p}\norm{u}_{\ell^p}^p.
\end{equation}
\end{proposition}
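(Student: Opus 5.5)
The plan is a pointwise inequality followed by summation over $x$. The claim is that for every $t\geq0$,
\[
 (t-a)_+\leq a^{1-p}t^p.
\]
Applying this with $t=u(x)$ and summing over $x\in\Z^d$ gives \eqref{eq:positive-norm} immediately, since the right-hand side sums to $a^{1-p}\norm{u}_{\ell^p}^p$. If $\norm{u}_{\ell^p}=\infty$ there is nothing to prove; otherwise both sides are sums of nonnegative terms, so no convergence issues arise.

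To prove the pointwise bound, split into two cases.

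\emph{Case $t\leq a$.} The left side is $0$ and the right side is nonnegative, so the inequality holds.

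\emph{Case $t>a$.} Here $t/a>1$ and $p>1$, so $(t/a)^{p-1}\geq1$. Hence
\[
 a^{1-p}t^p=t\,(t/a)^{p-1}\geq t\geq t-a=(t-a)_+.
\]

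There is no genuine obstacle. The only point to check is that the exponent bookkeeping $a^{1-p}t^p=t(t/a)^{p-1}$ is written correctly; this is what makes the estimate a Chebyshev-type bound at height $a$. This form is presumably what is needed later for the good part $A^{(1)}_{\star,N}$ after optimizing in $p$.
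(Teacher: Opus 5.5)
Your proposal is correct and follows the paper's approach exactly: the paper also proves the pointwise bound $(s-a)_+\leq a^{1-p}s^p$ for $s\geq0$ and sums over $x$. Your case split with $a^{1-p}t^p=t\,(t/a)^{p-1}\geq t$ supplies the one-line verification that the paper leaves implicit.
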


\begin{proof}
This proposition follows directly from the elementary inequality, for $s\geq0$, $a>0$ and $p>1$,
$(s-a)_+
    \leq
    a^{1-p}s^p$.
Hence, (\ref{eq:positive-norm}) follows.
\end{proof}

The next lemma allows us to sum the estimates for each dyadic levels.

\begin{proposition}\label{lem:excess}
Let $u_j,v_j$ be nonnegative functions on $\Z^d$, and let
$a_j>0$ satisfy $ \sum_{j\geq0}a_j=1/8$. Then 
\[
    \sum_{j\geq0}
    \bigl[(u_j-a_j)_++(v_j-a_j)_+\bigr](x)
    >1/4,
\]
for all $x$ with the property that $\sum_{j\geq0}(u_j+v_j)(x)>1/2$.
In particular,
\begin{equation}\label{eq:excess-indicator}
\begin{split}
    \mathbf{1}_{\{\sum_j(u_j+v_j)>1/2\}}
    \leq
    4\sum_{j\geq0}
    \bigl[(u_j-a_j)_++(v_j-a_j)_+\bigr].
\end{split}
\end{equation}
\end{proposition}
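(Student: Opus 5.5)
We argue pointwise: fix $x$ with $\sum_{j\geq0}(u_j+v_j)(x)>1/2$, and write $s_j=u_j(x)$, $t_j=v_j(x)$. The plan is to use the elementary inequality $s\leq (s-a)_+ + a$, valid for all $s\geq0$ and $a>0$. Applying it to each term and summing over $j$ gives
\[
 \sum_{j\geq0}(s_j+t_j)\leq \sum_{j\geq0}\bigl[(s_j-a_j)_++(t_j-a_j)_+\bigr]+2\sum_{j\geq0}a_j .
\]
Since $\sum_j a_j=1/8$, the last term equals $1/4$. The left side exceeds $1/2$ by assumption, so the excess sum is strictly greater than $1/2-1/4=1/4$. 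This is the first claim.

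For the indicator inequality \eqref{eq:excess-indicator}, we split into two cases according to the value of the indicator at $x$. If the indicator equals $0$, the right side is a sum of nonnegative terms, so the inequality holds trivially. If it equals $1$, then by the first part the excess sum exceeds $1/4$, so four times it exceeds $1$. In both cases \eqref{eq:excess-indicator} holds.

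There is no genuine obstacle. The only point to check is that the interchange of summation is legitimate even when the sums are infinite. This is fine because all terms are nonnegative. If $\sum_j(s_j+t_j)=\infty$, then since $\sum_j a_j$ is finite, the excess sum is also infinite, and both conclusions still hold.
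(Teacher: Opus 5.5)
Your proof is correct and is the paper's argument: summing the pointwise bound $s\leq a+(s-a)_+$ over $j$ gives $\sum_j(u_j+v_j)\leq 1/4+\sum_j[(u_j-a_j)_++(v_j-a_j)_+]$, and both claims follow from this. Your remark about possibly infinite sums is a harmless extra check that the paper leaves implicit.
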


\begin{proof}
For $s\geq0$ and $a>0$, by the triangle inequality, $s\leq a+(s-a)_+$,
which implies that
\begin{align*}
    \sum_j(u_j+v_j)
   & \leq
    2\sum_j a_j
    +
    \sum_j\bigl[(u_j-a_j)_++(v_j-a_j)_+\bigr]\\
    &=1/4+\sum_j\bigl[(u_j-a_j)_++(v_j-a_j)_+\bigr].
\end{align*}
The rest part of the proposition follows.
\end{proof}

\subsection{A summation trick estimate}
The next lemma should be viewed as a variant of the Bourgain interpolation trick in the restricted weak-type form needed here, see for instance \cite{MR0812567, MR1949873}.

\begin{lemma}\label{prop:shell}
Let $f:\Z^d\to[0,\infty)$ be finitely supported.
For each $j{\geq0}$, define
\[
    F_j=\{x\in\Z^d:2^{j-1}<f(x)\leq2^j\}
    \quad\text{and}\quad    f_j=f\mathbf{1}_{F_j}.
\]
Then
\begin{equation}\label{eq:shell-estimate}
    \#\{x:A_\star f(x)>1\}
    \leq
    C_d
    \sum_{j\geq0}
    \left[
        j+1+\Nmu(2^{12(j+1)})
    \right]
    \norm{f_j}_{\ell^1}.
\end{equation}
\end{lemma}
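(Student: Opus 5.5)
We first make the reductions. Since $A_\star$ is positive we may take $f\ge0$. Because $f$ is finitely supported, $f=\sum_{j\ge0}f_j+f\mathbf 1_{\{f\le 1/2\}}$. The small part satisfies $A_\star(f\mathbf 1_{\{f\le1/2\}})\le1/2$, so it can be absorbed by comparing against level $1/2$. Thus
\[
A_\star f\le 1/2+\sum_j A_\star f_j ,
\]
and it suffices to count the set where $\sum_j A_\star f_j>1/2$. For each $j$ we choose $N=N_j:=2^{4(j+1)}$. Then $N_j^3=2^{12(j+1)}$ matches the counting term in \eqref{eq:shell-estimate}, and $L_{N_j}\simeq j+1$. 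Proposition~\ref{prop:klm-decomp} with $N=N_j$ gives
\[
A_\star f_j\le u_j+v_j,\qquad u_j=A^{(1)}_{\star,N_j}f_j,\quad v_j=A^{(2)}_{\star,N_j}f_j .
\]
We fix weights $a_j=c\,2^{-j}/(j+1)^2$, normalised so that $\sum_j a_j=1/8$. Proposition~\ref{lem:excess} then gives
\[
\#\{A_\star f>1\}\le 4\sum_j\sum_x\bigl[(u_j-a_j)_++(v_j-a_j)_+\bigr],
\]
and we bound each excess sum using Proposition~\ref{lem:positive}.

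For $v_j$ we use $p=2$ and \eqref{eq:H-bound}:
\[
\sum_x(v_j-a_j)_+\le a_j^{-1}\|v_j\|_2^2\lesssim a_j^{-1}N_j^{-(d-4)}\|f_j\|_2^2 .
\]
On $F_j$ we have $\|f_j\|_2^2\le 2^j\|f_j\|_1$. With $d\ge5$ this is at most $(j+1)^2 2^{j}2^{j}2^{-4(j+1)}\|f_j\|_1\lesssim\|f_j\|_1$. This exponent bookkeeping is what forces a power such as $N_j=2^{4(j+1)}$.

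For $u_j$ we use $p_j=1+1/(j+1)$, so that $(p_j-1)L_{N_j}\simeq1$ and $p_j'\simeq j+1$. By \eqref{eq:G-bound},
\[
\|u_j\|_{p_j}^{p_j}\lesssim \bigl[\Nmu(N_j^3)^{1/p_j}+C(j+1)\bigr]^{p_j}\|f_j\|_{p_j}^{p_j}\lesssim \bigl[\Nmu(N_j^3)+C^{p_j}(j+1)^{p_j}\bigr]\|f_j\|_{p_j}^{p_j}.
\]
We also have $\|f_j\|_{p_j}^{p_j}\le 2^{j(p_j-1)}\|f_j\|_1\lesssim\|f_j\|_1$, since $j(p_j-1)\le1$. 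The prefactor is
\[
a_j^{1-p_j}=\bigl(c^{-1}2^j(j+1)^2\bigr)^{1/(j+1)}\lesssim1 .
\]
Finally $(j+1)^{p_j}=(j+1)(j+1)^{1/(j+1)}\lesssim j+1$. Combining these gives
\[
\sum_x(u_j-a_j)_+\lesssim\bigl[j+1+\Nmu(2^{12(j+1)})\bigr]\|f_j\|_1 .
\]
Summing over $j$ yields \eqref{eq:shell-estimate}.

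The main obstacle is the simultaneous calibration of $N_j$, $p_j$ and $a_j$. Three requirements must hold at once. The $\ell^2$ error must decay faster than $2^{2j}$. The interpolation constant $p'(1+((p-1)L)^{-2})$ must stay $O(j+1)$. The factors $a_j^{1-p_j}$ and $2^{j(p_j-1)}$ must remain bounded. The choice $p_j-1\sim1/(j+1)$ together with $L_{N_j}\sim j$ is exactly what makes this balance work.
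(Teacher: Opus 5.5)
Your proposal is correct and follows the paper's proof step for step. It uses the same absorption of $f\mathbf 1_{\{f\le 1/2\}}$ at level $1/2$, the same choice $N_j=2^{4(j+1)}$ in Proposition~\ref{prop:klm-decomp}, and the same application of Propositions~\ref{lem:excess} and~\ref{lem:positive}, with $p_j-1\sim 1/(j+1)$ for the main part and $p=2$ for the minor-arc error. The only differences are cosmetic: the paper takes $p_j=1+1/(j+2)$ and $a_j=2^{-j-4}$. Your explicit check that $a_j^{1-p_j}\lesssim 1$ also spells out a factor the paper uses without comment.
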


\begin{proof}
We apply the high-low decomposition to $f$. Let $ f_{\mathrm{low}}
    =
    f\mathbf{1}_{\{f\leq1/2\}},$ then $f=f_{\mathrm{low}}+\sum_{j\geq0}f_j.$
By the definition of the normalized spherical mean, $
    A_\star f_{\mathrm{low}}\leq 1/2$, and hence
$$
    A_\star f
    \leq
1/2+\sum_{j\geq0}A_\star f_j.
$$

For each $j\geq0$, we apply Proposition \ref{prop:klm-decomp} to each $A_\star f_j$ with $p_j=1+1/(j+2)$ and $N_j=2^{4(j+1)}$. Therefore, we have
\begin{align*}
     A_\star f
    \leq1/2+\sum_{j\geq0}(A^{(1)}_{\star,N_j}f_j+A^{(2)}_{\star,N_j}f_j),
\end{align*}
where $A^{(1)}_{\star,N_j}f_j$ and $A^{(2)}_{\star,N_j}f_j$ satisfy the properties that
\begin{align}
  \|A^{(1)}_{\star,N_j}f_j\|_{\ell^{p_j}}
  &\lesssim_d
  \left[
    \Nmu(N_j^3)^{1/p_j}
    +p_j'\left(1+\frac1{((p_j-1)L_{N_j})^2}\right)
  \right]\|f_j\|_{\ell^{p_j}},\label{eq:G-bound1}\\
  \|A^{(2)}_{\star,N_j}f_j\|_{\ell^2}
  &\lesssim_dN^{-(d-4)/2}_j\|f_j\|_{\ell^2}.\label{eq:H-bound2}
\end{align}
Choose $a_j=2^{-j-4}$, then $ \sum_{j\geq0}a_j=1/8$.
Proposition~\ref{lem:excess} implies that
\begin{equation}\label{eq:main-excess-bound}
\begin{split}
    \#\{x:A_\star f(x)>1\}
    \leq
    4\sum_{j\geq0}\sum_{x\in\Z^d}
    \bigl[
        (A^{(1)}_{\star,N_j}f_j(x)-a_j)_+
        +(A^{(2)}_{\star,N_j}f_j(x)-a_j)_+
    \bigr].
\end{split}
\end{equation}
All sums in this inequality have nonnegative terms. Tonelli's theorem
therefore allows the sums over $j$ and $x$ to be interchanged.

We estimate the right hand side of (\ref{eq:G-bound1}). A direct computation shows that, for each $j\geq0$,
$$
    (p_j-1)L_{N_j}
    =
    \frac{6(j+1)}{j+2}
    \geq3.
$$
Thus, (\ref{eq:G-bound1}) gives
$$\|A^{(1)}_{\star,N_j}f_j\|_{\ell^{p_j}}
    \leq
    C_d
    \left[
       \Nmu(N_j^3)^{1/p_j}+j+3
    \right]
    \norm{f_j}_{\ell^{p_j}},
$$
which leads to, raising this inequality to the power $p_j$, 
\begin{align}\label{eq:good-p-power}
    \|A^{(1)}_{\star,N_j}f_j\|_{\ell^{p_j}}^{p_j}
    &\leq
    C_d
    \left[
       \Nmu(N_j^3)+(j+3)^{p_j}
    \right]
    \norm{f_j}_{\ell^{p_j}}^{p_j}\notag\\
      &\leq
    C_d
    \left[
       \Nmu(N_j^3)+(j+1)
    \right]
    \norm{f_j}_{\ell^{p_j}}^{p_j},
\end{align}
where the last inequality follows from $(j+3)^{p_j}\leq6(j+1)$. Beside, on the support of $f_j$, we have $f_j\leq2^j$, and hence
\begin{equation}\label{eq:shell-p}
\begin{split}
    \norm{f_j}_{\ell^{p_j}}^{p_j}
    &=
    \sum_x f_j(x)f_j(x)^{p_j-1}
\leq
    2^{j(p_j-1)}\norm{f_j}_{\ell^1}
    =
    2^{j/(j+2)}\norm{f_j}_{\ell^1}
    \leq
    2\norm{f_j}_{\ell^1}.
\end{split}
\end{equation}
By Proposition~\ref{lem:positive}, (\ref{eq:G-bound1}),
\eqref{eq:good-p-power} and \eqref{eq:shell-p}, we obtain
\begin{equation}\label{eq:good-excess-final}
    \sum_x (A^{(1)}_{\star,N_j}f_j(x)-a_j)_+
    \lesssim_d
   \left[
       \Nmu(N_j^3)+(j+1)
    \right]\norm{f_j}_{\ell^1}.
\end{equation}

To finish the proof, it remains to estimate the right hand side of (\ref{eq:H-bound2}).
Using Proposition~\ref{lem:positive} with $p=2$, we obtain
\begin{equation}\label{eq:error-excess-start}
\begin{split}
    \sum_x(A^{(2)}_{\star,N_j}f_j(x)-a_j)_+
    &\leq
    a_j^{-1}\|A^{(2)}_{\star,N_j}f_j\|^2_{\ell^2}
    \leq
    C_d a_j^{-1}
    N_j^{-(d-4)}
    \norm{f_j}_{\ell^2}^2.
\end{split}
\end{equation}
From the construction of $f_j$, $f_j\leq2^j$; as a result,
$
    \norm{f_j}_{\ell^2}^2
    \leq
    2^j\norm{f_j}_{\ell^1}.$
After this substitution, the coefficient of $\norm{f_j}_{\ell^1}$
is bounded by a dimensional constant times
$$
    a_j^{-1}N_j^{-(d-4)}2^j
    =
    2^{j+4}
    2^{-4(d-4)(j+1)}
    2^j
 =
    2^{2j+4-4(d-4)(j+1)}.
$$
We highlight that since $d-4\geq1$, then
\[
    2j+4-4(d-4)(j+1)
    \leq
    2j+4-4(j+1)
    =
    -2j.
\]
Thus, from (\ref{eq:error-excess-start}), one has
\begin{equation}\label{eq:error-excess-final}
      \sum_x(A^{(2)}_{\star,N_j}f_j(x)-a_j)_+
    \leq
    C_d2^{-2j}\norm{f_j}_{\ell^1}
    \leq
    C_d\norm{f_j}_{\ell^1}.
\end{equation}

Substitute \eqref{eq:good-excess-final} and
\eqref{eq:error-excess-final} into
\eqref{eq:main-excess-bound}, the proof is complete.
\end{proof}

\subsection{Proofs of the main results}

\begin{proposition}
\label{lem:log-comparison}
If $2^{j-1}<t\leq2^j$ for some
  $j\geq0$,
then $\Nmu(2^{12(j+1)})
    \leq
    \Nmu(2^{24\mathcal{L}(t)}).$
\end{proposition}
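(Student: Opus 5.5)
Since $\Nmu$ is nondecreasing in its argument, it suffices to compare the exponents and show
\[
 12(j+1)\leq 24\,\mathcal{L}(t)\qquad\text{whenever } 2^{j-1}<t\leq 2^j,\ j\geq0 .
\]
Monotonicity is immediate from the definition \eqref{eq:Nmu}: if $N_1\leq N_2$ then $\{k:\mu_k\leq N_1\}\subset\{k:\mu_k\leq N_2\}$. One harmless point is that \eqref{eq:Nmu} is stated for $N\geq2$. Here both arguments are at least $2^{12}$, since $12(j+1)\geq12$ and $\mathcal{L}(t)\geq1$, so no edge case arises.

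To compare the exponents, I split into two cases.

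\emph{Case $j=0$.} Here $t\in(1/2,1]$, so $\log^+t=0$ and $\mathcal{L}(t)=1$. The required inequality reads $12\leq24$, which holds.

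\emph{Case $j\geq1$.} Here $t>2^{j-1}\geq1$, so
\[
 \mathcal{L}(t)=1+\log t>1+(j-1)\log2 .
\]
It then suffices to check $j+1\leq 2\bigl(1+(j-1)\log 2\bigr)$, i.e.\ $j+1\leq 2+2(j-1)\log2$. Since $2\log2\approx1.386>1$, the right side minus the left side equals $(2\log2-1)(j-1)\geq0$ for $j\geq1$. At $j=1$ this is equality, namely $2\leq2$, and the margin grows with $j$. Multiplying by $12$ gives $12(j+1)\leq24\mathcal{L}(t)$.

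Combining the two cases with monotonicity gives $\Nmu(2^{12(j+1)})\leq\Nmu(2^{24\mathcal{L}(t)})$. There is no real obstacle. The only care needed is that $\log$ is natural (so the $\log 2$ factor enters) and that the $j=1$ endpoint is tight. This tightness is presumably why the paper takes the factor $24=2\cdot12$ rather than something smaller.
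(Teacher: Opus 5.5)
Your proposal is correct and follows the same route as the paper. The paper also reduces, via monotonicity of $\Nmu$, to the inequality $j+1\leq 2\mathcal{L}(t)$; it states that inequality without detail, and your two-case check ($j=0$, and $j\geq1$ using $2\log 2>1$) verifies it correctly.
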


\begin{proof}
Remark that if $2^{j-1}<t\leq2^j$ for some
  $j\geq0$, then $ j+1\leq2\mathcal{L}(t)$. Thus, the desired result follows from the monotonicity.
\end{proof}

\subsubsection{Proof of Theorem~\ref{thm:counting}}
\begin{proof}
First let $f\geq0$ and $\alpha=1$.
For $x\in F_j$, Proposition~\ref{lem:log-comparison} gives
\begin{align}\label{dyadicandlog}j+1+\Nmu(2^{12(j+1)})
    \leq
    2\mathcal{L}(f(x))
    +
    \Nmu(2^{24\mathcal{L}(f(x))}).
\end{align}
Therefore, Lemma~\ref{prop:shell} implies
\begin{align*}
    \#\{x:A_\star f(x)>1\}
    \leq
    C_d\sum_x
    f(x)
    \left[
        \mathcal{L}(f(x))
        +
        \Nmu(2^{24\mathcal{L}(f(x))})
    \right].
\end{align*}
By the rescaling argument, (\ref{eq:counting-endpoint}) is obtained.
\end{proof}

\subsubsection{Proof of Theorem~\ref{thm:full-mu}}
\begin{proof}
For every $L\geq1$, a straightforward computation shows that
\[\Nmu(2^{24L})
    \leq
\Clog(1+\log(2^{24L}))
    =
    \Clog(1+24L\log2).
\]
Since $L\geq1$ and $L+\Nmu(2^{24L})
    \leq
    C(1+\Clog)L.$ We finish the proof.
\end{proof}

\subsubsection{Proof of Corollary~\ref{cor:model}}
\begin{proof}
For $\mu_k=2^k$, with $k\geq1$, one has $ \Nmu(N)
    =
    \lfloor\log_2N\rfloor$ for all $N\geq 2.$
Hence,
\[
    \frac{\Nmu(N)}{1+\log N}
    \leq
    \frac{\log N}{(1+\log N)\log2}
    \leq
    \frac1{\log2}.
\]
Thus, $\Clog$ is bounded by an absolute constant, and thus
Theorem~\ref{thm:full-mu} proves
\eqref{eq:model-endpoint}.
\end{proof}

\begin{remark}
In the previous version of this paper \cite[Section~4]{LLLS26}, we approached the endpoint problem using a summation argument combined with a variant of Yano's extrapolation theorem. This method yielded only the weaker estimate $ \ell(\log \ell)^2 \longrightarrow \ell^{1,\infty}$ for the sequence $\lambda_k=(2^k)!$. The key new observation in the present paper is the more efficient summation argument developed in Lemma~\ref{prop:shell}, which allows us to reach the desired $\ell\log\ell$ endpoint estimate. We emphasize that the decomposition of the discrete spherical averages established in Section~\ref{sec:3} is unchanged from the previous version; the improvement comes entirely from the new endpoint summation argument.
\end{remark}
\medskip
\section{From the endpoint estimate to a growth dichotomy}\label{sec:counterexample}
\subsection{A pioneer proposition: motivation for the proof of Theorem \ref{thm:klm-counterexample}}
In this section, we prove a weaker result, Proposition A.  The quantities $A^{\Z^d_p}_{\lambda_j}\mathbf{1}_{p\Z^d}$ below are realized as the distribution of lattice points on spheres of $p$ residue class, see also \cite{MR2287111}. 

For the convenience of the readers, we state the Proposition again.
\begin{propA}
Let $d\geq5$ and let $\lambda_k=\mu_k!$, where $(\mu_k)$ is a strictly increasing sequence of positive integers. Suppose that, for some $C_0<\infty$,
\begin{align}
 \#\{x\in\Z^d:A_\star f(x)>\alpha\}
 \leq C_0\sum_x\frac{|f(x)|}{\alpha}
\mathcal{L}\left(\frac{|f(x)|}{\alpha}\right)
\end{align}
for every $\alpha>0$ and every finitely supported $f:\Z^d\to\mathbb C$. Then
\[
 \Nmu(N)\leq C_{d,C_0}(1+\log N)^2,
 \qquad N\geq2.
\]
\end{propA}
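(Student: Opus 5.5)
The plan is to test the hypothesis on $f=\mathbf 1_{p\Z^d\cap B_R}$, where $p$ is a prime just above $N$ and $R\gg p$. The point is that the radii with $\mu_k<p$ are exactly those for which $p\nmid\lambda_k$. We then compare the level set of $A_\star f$ at height $\alpha\approx p^{1-d}$ with the right-hand side of the $\ell\log\ell$ inequality.

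\textbf{Setup.} Fix $N\geq2$ and take a prime $p\in(N,2N]$ by Bertrand's postulate. For every $k$ with $\mu_k\leq N$ we have $\mu_k<p$, hence $p\nmid\lambda_k$.

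\textbf{Lower bound for the averages.} Since $p\Z^d$ is invariant under translation by $p\Z^d$, for $x$ well inside $B_R$ (at distance $\gg\lambda_k^{1/2}$ from the boundary) we have
\[
 A_{\lambda_k}f(x)=A^{\Z^d_p}_{\lambda_k}\mathbf 1_{p\Z^d}(x)=\nu_{\lambda_k,p}(-x \bmod p).
\]
We take $R$ so large that the boundary layer is negligible for the finitely many $k$ with $\mu_k\leq N$. By the equidistribution of lattice points on spheres in residue classes (the heuristic behind Theorem~\ref{thm:asym}, used here only crudely), $\nu_{\lambda,p}(a)$ is concentrated on the quadric $\{|a|^2\equiv\lambda \pmod p\}$. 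This quadric has about $p^{d-1}$ points, so the average value there is about $p^{1-d}$. Without the full asymptotic, I would use the $L^1$ information $\sum_a\nu_{\lambda,p}(a)=1$, $\operatorname{supp}\nu_{\lambda,p}\subset\{|a|^2\equiv\lambda\}$, and an $L^2$ bound $\sum_a\nu_{\lambda,p}(a)^2\lesssim p^{1-d}$. The $L^2$ bound comes from Gauss-sum bounds and $d\geq5$ when $\lambda$ is large compared with $p$; the small factorials are handled by enlarging $p$ relative to $N$ if necessary. By Paley--Zygmund, $\nu_{\lambda,p}>c\,p^{1-d}$ on a subset of the quadric of proportion $\gtrsim1$. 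This crude level-set estimate is what loses a logarithm compared with Theorem~\ref{thm:klm-counterexample}.

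\textbf{Comparison of the two sides.} Let $M$ be the number of distinct residues $\lambda_k \bmod p$ with $\mu_k\leq N$. The quadrics for distinct nonzero residues are disjoint, so with $\alpha=c\,p^{1-d}$ the level set $\{A_\star f>\alpha\}\cap B_{R/2}$ has size $\gtrsim M R^d p^{-1}$. The right-hand side of the hypothesis equals
\[
 C_0\,\#(p\Z^d\cap B_R)\,\alpha^{-1}\,\mathcal L(\alpha^{-1})\simeq C_0 R^dp^{-d}p^{d-1}\log p .
\]
Letting $R\to\infty$ gives $M\lesssim_{d,C_0}\log p\lesssim\log N$.

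\textbf{Main obstacle: collisions.} We still have to pass from $M$ to $K:=\Nmu(N)$, since the residues $\mu_k!\bmod p$ need not be distinct. I would first count collisions. A collision $\mu_k!\equiv\mu_{k'}!\pmod p$ with $\mu_k<\mu_{k'}<p$ means $(\mu_k+1)\cdots\mu_{k'}\equiv1\pmod p$. By Cauchy--Schwarz, $M\geq K^2/(K+\#\{\text{collision pairs}\})$, so it suffices to bound the number of collision pairs by $O(K\log N)$.

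To do this I would use the freedom in choosing $p$. One could vary $p$ over many primes in $(N,N^{C}]$ and average over them. Alternatively one could replace the single-prime test function by an $\ell^1$-normalized sum over several primes, using the maximal function jointly. In either version the aim is to show that a typical prime sees at most $O(K\log N)$ collisions; inserted into the Cauchy--Schwarz bound this gives $K\lesssim M\log N\lesssim(\log N)^2$. This square, rather than the sharp bound of Theorem~\ref{thm:klm-counterexample}, is exactly what Proposition~A claims. I expect this collision-control step, together with making the Paley--Zygmund lower bound uniform in small $\lambda_k$, to be the hardest part of the argument.
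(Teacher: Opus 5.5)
\textbf{Verdict.} Your proposal has two genuine gaps, and both are avoided by the paper's simpler argument.

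\textbf{The collision step is not proved.} For a single prime $p\in(N,2N]$ you have no control on the number of pairs with $\mu_k!\equiv\mu_{k'}!\pmod p$. The claim ``a typical prime sees $O(K\log N)$ collisions'' is only stated as an aim. Your remark about varying $p$ points the right way, but carried out it removes collisions altogether. The product $D_N=\prod_{1\le i<j\le K}|\mu_j!-\mu_i!|$ satisfies $\log D_N\le\binom{K}{2}\log N!\le N^3\log N$, so at most $N^3/4$ of its prime factors exceed $N^4$. Meanwhile $(N^4,2N^4)$ contains $\gg N^4/\log N$ primes. Any prime $p$ there with $p\nmid D_N$ makes $\lambda_1,\dots,\lambda_K$ pairwise distinct and nonzero mod $p$, with $\log p\lesssim 1+\log N$. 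No Cauchy--Schwarz is needed.

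\textbf{The Paley--Zygmund lower bound is unsupported.} Bounding the $q=p$ major-arc term $\sum_{a\in\Z_p^\times}e_p(-\lambda a)G(a,b,p)$ termwise by Gauss-sum bounds only gives $|\widehat\nu_{\lambda,p}(b)|\lesssim p^{1-d/2}$. That yields $\sum_a\nu_{\lambda,p}(a)^2\lesssim p^{2-d}$. Via Cauchy--Schwarz this only guarantees $\gtrsim p^{d-2}$ points above level $c\,p^{1-d}$, a proportion $p^{-1}$ of the quadric. Reaching $p^{1-d}$ requires cancellation in the $a$-sum. The paper supplies this through the exact evaluation in Lemma~\ref{lem:Raman-cancellation} and Proposition~\ref{prop-exact-main}, leading to Theorem~\ref{thm:asym} and Corollary~\ref{cor:common-level}, valid for $\lambda\ge p^{8d}$.

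Moreover, the lower bound is false when $\lambda_k$ is small relative to $p$. If $\lambda_k<p^2/4$, distinct points of the sphere have distinct residues, so $\nu_{\lambda_k,p}$ lives on only $s_{\lambda_k}\lesssim p^{d-2}$ points. Enlarging $p$ makes this worse, not better. The correct remedy, used in the proof of Theorem~\ref{thm:klm-counterexample}, is to discard the $O(d\log N)$ indices with $\mu_k\lesssim d\log p$.

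\textbf{Where the logarithm is lost.} Your accounting is off. If the level-set bound held, your comparison would give $M\lesssim C_0\log p$ with no loss. Once repaired as above, your route is the paper's proof of the sharp Theorem~\ref{thm:klm-counterexample}, not of Proposition~A.

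\textbf{What Proposition~A actually needs.} No lower bound on level sets is required. With $p$ chosen as above, the $\nu_{\lambda_j,p}$ have disjoint supports. Hence $h=\max_j\nu_{\lambda_j,p}=\sum_j\nu_{\lambda_j,p}$ satisfies $0\le h\le 1$ and $\sum_a h(a)=K$. Use the layer-cake formula, the hypothesis transferred to $\Z_p^d$ exactly as in your test-function step at each level $t\in[p^{-d},1]$, and the trivial bound $p^d$ for $t<p^{-d}$. This gives
\[
 K=\int_0^1\#\{a:h(a)>t\}\,dt\lesssim_d 1+C_0\int_{p^{-d}}^1\frac{1+\log(1/t)}{t}\,dt\lesssim_d 1+C_0(\log p)^2 .
\]
The square comes from integrating the $\ell\log\ell$ bound over all levels. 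This argument works for every $\lambda_j$, however small.
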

\begin{proof}
For a sufficiently large $M>0$, let $ Q_M=\{0,1,\ldots,Mp-1\}^d$ and $f_M:=\mathbf{1}_{p\Z^d\cap Q_M}$, where $p$ is a prime. From the definition of the spherical average, \begin{align*}
    A_{\lambda_j}f_{M}(x):=1/s_{\lambda_j}\cdot\#\{n\in\Z^d:|n|^2=\lambda_j,\ n\equiv x\pmod p,\ x-n\in Q_M\}.
\end{align*}
Fix a finite set of indices $I$ and consider an operator on $\Z^d_p$, given by
\begin{align}\label{dismodp}
A^{\Z^d_p}_{\lambda_j}\mathbf{1}_{p\Z^d}(a)
 :={1}/{s_{\lambda_j}}
 \cdot\#\{n\in\Z^d:|n|^2=\lambda_j,\ n\equiv a\pmod p\},
 \qquad j\in I.
\end{align}
We claim that, for $0<t\leq1$,
\begin{align}\label{claim}
 \#\left\{a\in \Z^d_p:\max_{j\in I}A^{\Z^d_p}_{\lambda_j}\mathbf{1}_{p\Z^d}(a)>t\right\}
 \leq \frac{C_0}{t}\mathcal{L}\left(\frac{1}{t}\right).
\end{align}
To see this, if $x\in Q_M$ and
$\dist(x,\Z^d\setminus Q_M)>\max_{j\in I}\sqrt{\lambda_j}$, then
\[
 A_{\lambda_j}f_M(x)=A^{\Z^d_p}_{\lambda_j}\mathbf{1}_{p\Z^d}(x\bmod p),
 \qquad j\in I,
\]
which leads to, from the hypothesis,
\begin{align}\label{divide}
 M^d\cdot \#\left\{a\in \Z^d_p:\max_{j\in I}A^{\Z^d_p}_{\lambda_j}\mathbf{1}_{p\Z^d}(a)>t\right\}&\lesssim_d\#\left\{x\in Q_M:A_\star f_M(x)>t\right\}\notag\\
 &\leq C_0\#(p\Z^d\cap Q_M)\cdot\frac{1}{t}
\mathcal{L}\left(\frac{1}{t}\right),
\end{align}
in which, the first inequality follows from the fact that each residue class modulo $p$ has roughly
$M^d$ representatives among these interior
points of $Q_M$. Since $\#(p\Z^d\cap Q_M)=M^d$, then dividing both sides of (\ref{divide}) by $M^d$ proves the claim.

Now suppose that $\lambda_1,\ldots,\lambda_R$ are pairwise distinct
modulo $p$. Remark that
\[
 \supp A^{\Z^d_p}_{\lambda_j}\mathbf{1}_{p\Z^d}
 \subseteq\{a\in \Z^d_p:|a|^2\equiv\lambda_j\pmod p\},
\]
then, for $1\leq j\leq R$, $A^{\Z^d_p}_{\lambda_j}\mathbf{1}_{p\Z^d}$ have pairwise disjoint
supports. Define
$h(a)=\max_{1\leq j\leq R}A^{\Z^d_p}_{\lambda_j}\mathbf{1}_{p\Z^d}(a)$, then one has
\begin{align*}
    h(a)=\sum^R_{j=1}A^{\Z^d_p}_{\lambda_j}\mathbf{1}_{p\Z^d}(a),\quad 0\leq h\leq1\quad\text{and}\quad \sum_{a\in \Z^d_p}h(a)\simeq_d R.
\end{align*}
Hence, by the layer-cake formula, the trivial estimate
$\#\{a\in \Z^d_p:h(a)>t\}\leq p^d$ and the claim (\ref{claim}), we obtain that
\begin{align}\label{proceeding}
R\simeq_d \sum_{a\in \Z^d_p}h(a)\leq 1+C_0\int_{p^{-d}}^1
       \frac{1+\log(1/t)}{t}\,dt=1+C_0\left(d\log p+\frac{d^2}{2}(\log p)^2\right).
\end{align}

Let $R=\Nmu(N)$ and  assume that $R\geq2$.
Consider the quantity $ D_N=\prod_{1\leq i<j\leq R}|\mu_j!-\mu_i!|$. Note that if $q$ is a prime such that $q\nmid D_N$, then the integers
$\mu_1!,\ldots,\mu_R!$ are pairwise distinct modulo $q$.
Since $R\leq N$ and $\mu_j\leq N$ for all $1\leq j\leq R$, then
\[
 \log D_N
 \leq \binom{R}{2}\log(N!)
 \leq N^3\log N.
\]
Thus, on one hand, $D_N$ has $O(N^3\log N)$ distinct prime divisors. On the other
hand, the prime number theorem gives $\gg N^4/\log N$ primes in
$(N^4,2N^4)$ for all sufficiently large $N$. We may therefore choose
a prime $p$ in this interval such that $p\nmid D_N$. The integers
$\mu_1!,\ldots,\mu_R!$ are then pairwise distinct and non-zero modulo $p$. Since
$\log p\lesssim1+\log N$, the preceding estimate (\ref{proceeding}) yields
\[
 \Nmu(N)\leq C_{d,C_0}(1+\log N)^2.
\]
\end{proof}
\subsection{Asymptotic behavior of the distribution of lattice points on spheres of $p$ residue class}
In this section, we investigate the asymptotic behavior of $A^{\Z^d_p}_{\lambda_j}\mathbf{1}_{p\Z^d}$. 
\subsubsection{Spherical masses in residue classes}
For a prime $p$, recall that
\begin{equation}\label{nec:nu-definition}
 \nu_{\lambda,p}(a)
 =\frac1{s_\lambda}
 \#\{n\in\Z^d:|n|^2=\lambda,\ n\equiv a\pmod p\},
 \qquad a\in\Z_p^d.
\end{equation}
For all $\lambda>0$, it satisfies the property that
\begin{equation}\label{nec:nu-basic}
 0\leq\nu_{\lambda,p}\leq1,\quad
 \sum_{a\in\Z_p^d}\nu_{\lambda,p}(a)=1\quad\text{and}\quad
 \supp\nu_{\lambda,p}\subseteq
 \{a\in\Z_p^d:|a|^2\equiv\lambda\pmod p\}.
\end{equation}
Consider the Fourier transform of $\nu_{\lambda,p}$, then 
\begin{align}\label{asy1}
\widehat\nu_{\lambda,p}(b)
=\sum_{a\in\Z_p^d}\nu_{\lambda,p}(a)e_p(-a\cdot b)
 =\rho_\lambda a_\lambda(b/p),
\end{align}
where $a_\lambda$ is the multiplier of $\overline A_\lambda$ and $
 \rho_\lambda={\lambda^{d/2-1}}/{s_\lambda}.$

For $b\in\Z_p^d$ and $d\geq5$, define the approximation multiplier
\begin{equation}\label{nec:H-definition}
 h_{\lambda,p}(b)
 =\sum_{\substack{q\ge1\\qb/p\in\Z^d}}
 \sum_{a\in\Z_q^\times}e_q(-\lambda a)G(a,qb/p,q).
\end{equation}
 Motivated by the circle method and the sampling argument, we show $a_\lambda(b/p)$ can be "approximated" by $h_{\lambda,p}(b)$.

\begin{lemma}\label{lem-rational-error}
Let $d\geq5$. If $\lambda\ge p^8$, then
\begin{equation}\label{nec:rational-error}
 \sup_{b\in\Z_p^d}
 |a_\lambda(b/p)-\cMSW h_{\lambda,p}(b)|
 \leq C_d\lambda^{-1/8}.
\end{equation}
\end{lemma}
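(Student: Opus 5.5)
We prove \eqref{nec:rational-error} by combining the decomposition \eqref{eq:MSW_decomp}, $\overline A_\lambda=M_\lambda+E_\lambda$, evaluated at the single frequency $\xi=b/p$, with pointwise (rather than maximal) bounds for each piece. Thus
\[
a_\lambda(b/p)=m_\lambda(b/p)+e_\lambda(b/p),
\]
where $e_\lambda$ is the minor-arc multiplier. We will show that $|e_\lambda(b/p)|\lesssim_d\lambda^{1-d/4}$ and $|m_\lambda(b/p)-\cMSW h_{\lambda,p}(b)|\lesssim_d\lambda^{-1/8}$. Since $d\ge5$ gives $\lambda^{1-d/4}\le\lambda^{-1/4}\le\lambda^{-1/8}$, these two bounds suffice.

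\textbf{Minor arcs.} Proposition~\ref{minor} bounds an $\ell^2$ maximal operator. For a single $\lambda$, its $\ell^2\to\ell^2$ bound equals $\sup_\xi|e_\lambda(\xi)|$, so Proposition~\ref{minor} with $\Lambda=\lambda$ already gives $\|e_\lambda\|_{L^\infty(\T^d)}\lesssim_d\lambda^{1-d/4}$. This bound holds at every point, not merely almost everywhere, because $e_\lambda$ is a trigonometric polynomial minus a continuous function.

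\textbf{Major arcs.} We evaluate
\[
m_\lambda(b/p)=\cMSW\sum_q\sum_{a\in\Z_q^\times}e_q(-\lambda a)\sum_{\ell}G(a,\ell,q)\,\widetilde\psi_q(\eta)\,\widetilde{d\sigma_\lambda}(\eta),\qquad \eta=b/p-\ell/q .
\]
The cutoff forces $|\eta|\le 1/(2q)$, so for each $q$ at most one $\ell$ contributes. Split the terms into two types.

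\emph{Exact hits}, $\eta=0$, i.e.\ $qb/p\in\Z^d$ and $\ell\equiv qb/p$. Here $\widetilde\psi_q(0)\widetilde{d\sigma_\lambda}(0)=1$, and these terms reproduce exactly the summands of $\cMSW h_{\lambda,p}(b)$.

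\emph{Non-hits}, $\eta\neq0$. Then $\ell/q\neq b/p$ are distinct rationals with denominators $q$ and $p$, so $|\eta|\ge 1/(pq)$. Stationary phase gives $|\widetilde{d\sigma_\lambda}(\eta)|\lesssim(\lambda^{1/2}|\eta|)^{-(d-1)/2}$, and the Gauss-sum bound gives $|G|\lesssim q^{-d/2}$. Summing over $a$ (at most $q$ values), each non-hit term is $\lesssim q^{1-d/2}(pq/\lambda^{1/2})^{(d-1)/2}$.

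Split the non-hits at $q\le\lambda^{1/8}$. For these $q$ we have $pq\le\lambda^{1/8}\lambda^{1/8}=\lambda^{1/4}$, using $p\le\lambda^{1/8}$. Hence $pq/\lambda^{1/2}\le\lambda^{-1/4}$, and these terms sum to $\lesssim\lambda^{-(d-1)/8}\sum_q q^{1-d/2}\lesssim\lambda^{-1/8}$. For $q>\lambda^{1/8}$, use the trivial bound $|\widetilde{d\sigma}|\le1$; the tail is $\sum_{q>\lambda^{1/8}}q^{1-d/2}\lesssim\lambda^{-(d-4)/8}\le\lambda^{-1/8}$.

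\textbf{Convergence of $h_{\lambda,p}$.} Finally we check that $h_{\lambda,p}(b)$ converges absolutely, so that the identification of exact hits is legitimate. This follows from the same bound $|G|\lesssim q^{-d/2}$ and $d\ge5$, which also shows that the two sums are rearrangements of each other.

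\textbf{Main obstacle.} The delicate step is the separation bound for the non-hit terms. It must be uniform in $b$, and the choice of the threshold $\lambda^{1/8}$, balanced against the hypothesis $\lambda\ge p^8$, is exactly what makes both pieces of the non-hit sum $O(\lambda^{-1/8})$. A secondary point is that the pointwise use of Proposition~\ref{minor} requires the identity $\overline A_\lambda=M_\lambda+E_\lambda$ to hold at the level of multipliers at every rational point. I would verify this from continuity of all the multipliers involved.
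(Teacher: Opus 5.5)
Your outline follows the paper's argument: the minor-arc bound from Proposition~\ref{minor} at the single frequency, exact hits reproducing $\cMSW h_{\lambda,p}(b)$, the separation $|b/p-\ell/q|\ge 1/(pq)$ with stationary phase for the low-$q$ non-hits, and a trivial bound for large $q$. However, the numerology at your threshold fails in the tail.

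You have $\sum_{q>X}q^{1-d/2}\simeq X^{2-d/2}=X^{-(d-4)/2}$. With $X=\lambda^{1/8}$ this gives $\lambda^{-(d-4)/16}$, not the $\lambda^{-(d-4)/8}$ you wrote. For $d\ge6$ the bound is still $\le\lambda^{-1/8}$. For $d=5$ it is only $\lambda^{-1/16}$, so as written your proof does not give \eqref{nec:rational-error} when $d=5$. Your "main obstacle" remark also has the balancing backwards. The small-$q$ non-hit piece has a lot of slack, since $\lambda^{-(d-1)/8}\le\lambda^{-1/2}$. The trivial tail is the binding constraint, so the threshold must go up, not down.

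The fix is the paper's choice $T=\lfloor\lambda^{1/4}\rfloor$.
\begin{itemize}
\item For $q\le T$, you have $pq/\sqrt\lambda\le\lambda^{1/8+1/4-1/2}=\lambda^{-1/8}$. The low non-hit sum is then $\lesssim\lambda^{-(d-1)/16}\le\lambda^{-1/4}$.
\item The tail is $\lesssim T^{-(d-4)/2}\simeq\lambda^{-(d-4)/8}\le\lambda^{-1/8}$.
\end{itemize}
With this change your argument works. You match all exact hits to $h_{\lambda,p}$ using absolute convergence, instead of bounding the high parts of $m_\lambda$ and $h_{\lambda,p}$ separately as the paper does. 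That is a harmless difference.
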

\begin{proof}
By the major-minor arc decomposition and Proposition~\ref{minor},
\begin{align*}
    \left|a_\lambda(b/p)-\cMSW h_{\lambda,p}(b)\right|&\leq \left|a_\lambda(b/p)-m_\lambda(b/p)\right|+\left|m_\lambda(b/p)-\cMSW h_{\lambda,p}(b)\right|\\
    &\leq C_d\lambda^{-(d-4)/4}+\left|m_\lambda(b/p)-\cMSW h_{\lambda,p}(b)\right|.
\end{align*}
Since $d\geq 5$, then it remains to estimate $\left|m_\lambda(b/p)-\cMSW h_{\lambda,p}(b)\right|$. 

We apply the high-low decomposition to $m_\lambda$, that is (\ref{low}) and (\ref{high}), and $ h_{\lambda,p}$.
Choose $T=\lfloor\lambda^{1/4}\rfloor$, then
\begin{align*}
    \left|m_\lambda(b/p)-\cMSW h_{\lambda,p}(b)\right|\leq \left|m_{\lambda,> T}(b/p)\right|+\left|\cMSW h^{> T}_{\lambda,p}(b)\right|+\left|m_{\lambda,\leq T}(b/p)-\cMSW h^{\leq T}_{\lambda,p}(b)\right|,
\end{align*}
where 
\begin{align*}
    h^{> T}_{\lambda,p}(b)
 =\sum_{\substack{q>T\\qb/p\in\Z^d}}
 \sum_{a\in\Z_q^\times}e_q(-a\lambda)G(a,qb/p,q)\quad\text{and}\quad  h^{\leq T}_{\lambda,p}(b)= h_{\lambda,p}(b)- h^{> T}_{\lambda,p}(b).
\end{align*}
By the property of the Gauss sum that $|G(a,\ell,q)|\lesssim_d q^{-d/2}$  if $(a,q)=1$, the high parts of $m_\lambda$ and $h_{\lambda,p}$ can be dominated by
\begin{equation}\label{nec:tail-estimate}
\left|m_{\lambda,> T}(b/p)\right|+\left|\cMSW h^{> T}_{\lambda,p}(b)\right|\leq C_d\sum_{q>T}q^{1-d/2}\le C_dT^{-(d-4)/2}.
\end{equation}

Besides, from the definition of the low parts, $\left|m_{\lambda,\leq T}(b/p)-\cMSW h^{\leq T}_{\lambda,p}(b)\right|$ is bounded by
\begin{align}\label{hard}
\left|\sum_{\substack{q\leq T\\qb/p\notin\Z^d}}\sum_{a \in \Z_q^\times} e_q(-\lambda a) m^{a/q}_\lambda(b/p)\right|+\left|\sum_{\substack{q\leq T\\qb/p\in\Z^d}}\sum_{a \in \Z_q^\times} e_q(-\lambda a) \left[m^{a/q}_\lambda(b/p)-G(a,qb/p,q)\right]\right|.
\end{align}
For $q\le T$, if $qb/p\notin\Z^d$, then $b/p-\ell/q\ne0$. At least one coordinate
of this vector is a nonzero integer divided by $pq$, therefore 
\begin{equation}\label{nec:separation}
 \left|\frac bp-\frac\ell q\right|\ge\frac1{pq}.
\end{equation}
As a consequence, by the stationary phase for the spherical decay and the property of the Gauss sum, the first term in (\ref{hard}) is bounded by
\begin{align*}
 C_d\sum_{q\le T}q^{1-d/2}
 \left(1+\frac{\sqrt\lambda}{pq}\right)^{-(d-1)/2}\le C_d\left(\frac{pT}{\sqrt\lambda}\right)^{(d-1)/2}
 \sum_{q\le T}q^{1-d/2}\le C_d\left(\frac{pT}{\sqrt\lambda}\right)^{(d-1)/2}.
\end{align*}
For $q\le T$, if $qb/p\in\Z^d$, the summand with $\ell=qb/p$ in
\eqref{majorarc2} has cutoff and surface Fourier transform both
equal to one. It contributes  $G(a,qb/p,q)$. Every other
contributing term satisfies $b/p-\ell/q\ne0$, and hence (\ref{nec:separation}). Thus, from the stationary phase for the spherical decay and the property of the Gauss sum the second term in (\ref{hard}) is also bounded by $C_d\left(\frac{pT}{\sqrt\lambda}\right)^{(d-1)/2}$.

Combining all the estimates and the decomposition, we obtain
\begin{align*}
  \left|a_\lambda(b/p)-\cMSW h_{\lambda,p}(b)\right|
 &\le C_d\left[
 \lambda^{-(d-4)/4}+T^{-(d-4)/2}
 +\left(\frac{pT}{\sqrt\lambda}\right)^{(d-1)/2}\right]\\
 &\le C_d\left[
 \lambda^{-(d-4)/4}+\lambda^{-(d-4)/8}
 +\lambda^{-(d-1)/16}\right]
 \le C_d\lambda^{-1/8}.
\end{align*}
Here we used $p\le\lambda^{1/8}\simeq T^{1/2}$ and $d\ge5$.
\end{proof}

\subsubsection{Ramanujan sum and the almost orthogonal argument}
We investigate the kernel $H_{\lambda,p}$ associated with $h_{\lambda,p}$, which is given by the inverse Fourier transform
\begin{equation}\label{nec:K-definition}
 H_{\lambda,p}(c)
 =p^{-d}\sum_{b\in\Z_p^d}h_{\lambda,p}(b)e_p(c\cdot b).
\end{equation}
For $q\ge1$, let $\mathfrak{R}_q(t)=\sum_{a\in\Z_q^\times}e_q(at)$ be the Ramanujan sum 
and define
\begin{equation}\label{nec:B-definition}
 \mathfrak{QR}_q(\lambda)
 =q^{-d}\sum_{n\in\Z_q^d}\mathfrak{R}_q(|n|^2-\lambda)
 =\sum_{a\in\Z_q^\times}e_q(-a\lambda)G(a,0,q).
\end{equation}
The property of the Gauss sum  gives
\begin{equation}\label{nec:B-bound}
 |\mathfrak{QR}_q(\lambda)|\le C_dq^{1-d/2}
 \quad\text{and}\quad \sum_{q\ge1}|\mathfrak{QR}_q(\lambda)|\le C_d.
\end{equation}

We reformulate $H_{\lambda,p}$ in terms of Ramanujan sum. A straightforward computation and the definition of the normalized Gauss sum (\ref{normalizedGauss}) give that
\begin{align*}
 H_{\lambda,p}(c)= &\sum_{q\ge1}q^{-d}
 \sum_{a\in\Z_q^\times}\sum_{n\in\Z^d_q}  e_q(a(|n|^2-\lambda))\cdot\left[p^{-d}\sum_{\substack{b\in\Z_p^d\\qb/p\in\Z^d}}e_p((c+n)b)  \right]\\
 &=\sum_{q\ge1}q^{-d}
 \sum_{n\in\Z^d_q} \mathfrak{R}_q(|n|^2-\lambda) \cdot\left[p^{-d}\sum_{\substack{b\in\Z_p^d\\qb/p\in\Z^d}}e_p((c+n)b)  \right].
\end{align*}
Since $p$ is a prime, then the constraint $qb/p\in\Z^d$ implies that  either $(p,q)=1$ or $p\mid q$. If $(p,q)=1$, the condition $qb/p\in\Z^d$ forces
$b=0$ in $\Z_p^d$. By the orthogonality,
\[
 p^{-d}\sum_{b\in\Z_p^d}e_p((c+n)b)
 =\one_{\{(c+n)\equiv 0\pmod p\}},
\]
and hence we can rewrite $ H_{\lambda,p}(c)$ as
\begin{align*}
    p^{-d}\sum_{\substack{q\ge1\\(p,q)=1}} \mathfrak{QR}_q(\lambda)+ \sum_{\substack{q\ge1\\p\mid q}}q^{-d}
 \sum_{\substack{n\in\Z^d_q\\ (c+n)\equiv 0\pmod p}} \mathfrak{R}_q(|n|^2-\lambda) .
\end{align*}

For each $q$ with $p\mid q$, there is $k\in\N$ and $r\in\N$ such that  $q=p^kr$, where $(p,r)=1$. The Chinese remainder
theorem $\Z^d_q\cong\Z^d_{p^k}\times\Z^d_r$and the property of the Ramanujan sum $\mathfrak{R}_{p^vr}(t)=\mathfrak{R}_{p^v}(t)\mathfrak{R}_r(t)$ give
\begin{equation}\label{nec:CRT-factor}
q^{-d}
 \sum_{\substack{n\in\Z^d_q\\ (c+n)\equiv 0\pmod p}} \mathfrak{R}_q(|n|^2-\lambda)=\mathfrak{D}_{p^k}(c;\lambda)\cdot\mathfrak{QR}_r(\lambda),
\end{equation}
in which
\begin{equation}\label{nec:D-definition}
 \mathfrak{D}_{p^k}(c;\lambda)
 =p^{-kd}\sum_{\substack{n\in\Z_{p^k}^d\\(n+c)\equiv 0\pmod p}}
 \mathfrak{R}_{p^k}(|n|^2-\lambda).
\end{equation}
As a result, from (\ref{nec:CRT-factor}), we have\begin{align}\label{reducedH{lambda,p}}
 H_{\lambda,p}(c)&=p^{-d}\sum_{\substack{q\ge1\\(p,q)=1}} \mathfrak{QR}_q(\lambda)+ \sum_{k\geq 1}\sum_{\substack{r\geq 1\\(p,r)=1}}\mathfrak{D}_{p^k}(c;\lambda)\cdot\mathfrak{QR}_r(\lambda).
\end{align}

We explore the cancellation property of $\mathfrak{D}_{p^k}(c;\lambda)$., which is induced by the property of the Ramanujan sum.
\begin{lemma}\label{lem:Raman-cancellation}
Suppose that $p$ is odd and $p\nmid\lambda$. For every $c\in\Z_p^d$,
\begin{align}
\mathfrak{D}_{p}(c;\lambda)&=p^{-d}\cdot\mathfrak{R}_p(|c|^2-\lambda),
 \label{nec:first-local}\\
\mathfrak{D}_{p^k}(c;\lambda)&=0,\quad \forall k\geq2.
 \label{nec:higher-local}
\end{align}
\end{lemma}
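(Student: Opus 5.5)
Both identities come from expanding the Ramanujan sum and evaluating the resulting Gauss sums over the residue class. Write $n=-c+p m$, with $c$ lifted to $\Z_{p^k}^d$ and $m\in\Z_{p^{k-1}}^d$. Expanding $\mathfrak{R}_{p^k}$ in \eqref{nec:D-definition} gives
\[
\mathfrak{D}_{p^k}(c;\lambda)=p^{-kd}\sum_{a\in\Z_{p^k}^\times}e_{p^k}(-a\lambda)\sum_{m\in\Z_{p^{k-1}}^d}e_{p^k}\bigl(a|{-c}+pm|^2\bigr).
\]
The two cases $k=1$ and $k\geq2$ are then handled separately.

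For $k=1$ the inner sum has the single term $m=0$. So $\mathfrak{D}_p(c;\lambda)=p^{-d}\sum_{a\in\Z_p^\times}e_p(a(|c|^2-\lambda))=p^{-d}\mathfrak{R}_p(|c|^2-\lambda)$, which is \eqref{nec:first-local}. This case does not need $p$ odd or $p\nmid\lambda$.

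For $k\geq2$, expand $|{-c}+pm|^2=|c|^2-2p\,c\cdot m+p^2|m|^2$. The phase becomes
\[
e_{p^k}(a|c|^2)\,e_{p^{k-1}}(-2a\,c\cdot m)\,e_{p^{k-2}}(a|m|^2).
\]
The plan is to show the whole expression vanishes by summing over $a$ first. The fact to exploit is $\mathfrak{R}_{p^k}(t)=0$ whenever $p^{k-1}\nmid t$. Regrouping, $\mathfrak{D}_{p^k}(c;\lambda)=p^{-kd}\sum_{n\equiv -c\ (p)}\mathfrak{R}_{p^k}(|n|^2-\lambda)$. This is nonzero only if $p^{k-1}\mid |n|^2-\lambda$ for some $n$ in the class, and in particular only if $p\mid |c|^2-\lambda$. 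So if $|c|^2\not\equiv\lambda\pmod p$ we are done. In the remaining case $|c|^2\equiv\lambda\not\equiv0\pmod p$, so $c\not\equiv0\pmod p$; here I would use the standard formula $\mathfrak{R}_{p^k}(t)=p^k\mathbf 1_{p^k\mid t}-p^{k-1}\mathbf 1_{p^{k-1}\mid t}$. This reduces the claim to
\[
\#\{n\in c'+p\Z_{p^k}^d:\ |n|^2\equiv\lambda\ (p^k)\}=\tfrac1p\,\#\{n\in c'+p\Z_{p^k}^d:\ |n|^2\equiv\lambda\ (p^{k-1})\},
\]
with $c'=-c$. This is a Hensel-lifting statement. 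Since $p$ is odd and $c\not\equiv0$, the gradient $2n$ is nonzero mod $p$ on the whole class. Hence each solution mod $p^{j}$ in the class lifts to exactly $p^{d-1}$ solutions mod $p^{j+1}$, for every $j\geq1$. Fibering the left-hand set over reductions mod $p^{k-1}$ then gives the factor $p^{d-1}/p^{d}=1/p$, which proves \eqref{nec:higher-local}.

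The main obstacle is this lifting count, specifically getting the fiber sizes exactly right. Fix the residue mod $p^{k-1}$ of $n$; the set of $n\in\Z_{p^k}^d$ with that residue is a coset $n_0+p^{k-1}\Z_{p^k}^d$ of size $p^d$. Along this coset, $|n_0+p^{k-1}u|^2\equiv|n_0|^2+2p^{k-1}n_0\cdot u\pmod{p^k}$, because $2k-2\geq k$ for $k\geq2$. Since $2n_0\not\equiv0\pmod p$, the map $u\mapsto 2n_0\cdot u \bmod p$ is onto $\Z_p$. So exactly $p^{d-1}$ of the $p^d$ points satisfy the congruence mod $p^k$, which is the claimed ratio. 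This is where both hypotheses enter: oddness of $p$ so that $2n_0\not\equiv0$, and $p\nmid\lambda$ so that $c\not\equiv0$.
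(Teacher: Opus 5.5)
Your proof is correct and follows the paper's route. The case $k=1$ comes directly from the definition. For $k\geq2$ you use the prime-power formula $\mathfrak{R}_{p^k}(t)=p^k\mathbf 1_{p^k\mid t}-p^{k-1}\mathbf 1_{p^{k-1}\mid t}$, which reduces the vanishing to a Hensel-lifting count with ratio exactly $1/p$. You also carry out the lifting step explicitly, which the paper only attributes to Hensel's lemma, and you show where the hypotheses $p$ odd and $p\nmid\lambda$ enter.
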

\begin{proof}
The case $k=1$ follows from the definition. It remains to consider that $k\geq2$.
From the definition of the Ramanujan sum and the orthogonality property of characters, for $k\geq2$,
\begin{equation}\label{nec:Ramanujan-prime-power}
 \mathfrak{R}_{p^k}(t)=p^k\one_{\{p^k\mid t\}}
 -p^{k-1}\one_{\{p^{k-1}\mid t\}},
\end{equation}
and hence $p^{kd}\mathfrak{D}_{p^k}(c;\lambda)$ can be written as
\begin{align}\label{nec:cancellation-count}
 &p^k\#\{n\in\Z_{p^k}^d:n\equiv -c\pmod p,
 \ |n|^2\equiv\lambda\pmod{p^k}\}\notag\\
 &-p^{k-1}\#\{n\in\Z_{p^k}^d:n\equiv -c\pmod p,
 \ |n|^2\equiv\lambda\pmod{p^{k-1}}\}\notag\\
 &=p^k\#\{n\in\Z_{p^k}^d:n\equiv -c\pmod p,
 \ |n|^2\equiv\lambda\pmod{p^k}\}\notag\\
 &-p^{k-1+d}\#\{m\in\Z_{p^{k-1}}^d:m\equiv -c\pmod p,
 \ |m|^2\equiv\lambda\pmod{p^{k-1}}\}.
\end{align}
If $|c|^2\not\equiv\lambda\pmod p$, both counts vanish. Suppose
$|c|^2\equiv\lambda\pmod p$, then, from Hensel's Lemma, 
\begin{align*}
  &\#\{n\in\Z_{p^k}^d:n\equiv -c\pmod p,
 \ |n|^2\equiv\lambda\pmod{p^k}\}\\&=p^{d-1}\#\{m\in\Z_{p^{k-1}}^d:m\equiv -c\pmod p,
 \ |m|^2\equiv\lambda\pmod{p^{k-1}}\}
\end{align*}
Substituting into \eqref{nec:cancellation-count} gives $p^{kd}\mathfrak{D}_{p^k}(c;\lambda)=0$ for all $k\geq2$ and the proof is complete.
\end{proof}

\begin{proposition}\label{prop-exact-main}
If $p$ is odd and $p\nmid\lambda$, then
\begin{equation}\label{nec:exact-main}
 H_{\lambda,p}(c)
 =p^{1-d}\one_{\{|c|^2\equiv\lambda\pmod p\}}
 \sum_{\substack{r\geq 1\\(p,r)=1}}\mathfrak{QR}_r(\lambda).
\end{equation}
In particular, $|H_{\lambda,p}(c)|\le C_dp^{1-d}$ uniformly in $c$.
\end{proposition}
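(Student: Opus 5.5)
Substituting Lemma~\ref{lem:Raman-cancellation} into the reduced formula \eqref{reducedH{lambda,p}} kills every term with $k\ge2$. Only the $k=1$ term survives, so
\[
 H_{\lambda,p}(c)=\Bigl(p^{-d}+p^{-d}\mathfrak{R}_p(|c|^2-\lambda)\Bigr)\sum_{\substack{r\ge1\\(p,r)=1}}\mathfrak{QR}_r(\lambda).
\]
Both sums converge absolutely by \eqref{nec:B-bound}. This absolute convergence is also what justifies the earlier rearrangements: the regrouping over $q$, the splitting $q=p^kr$, and the interchange of the $q$-sum with the finite $b$-sum in the definition of $H_{\lambda,p}$.

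Next, evaluate the prime Ramanujan sum directly:
\[
\mathfrak{R}_p(t)=\sum_{a\in\Z_p^\times}e_p(at)=p\one_{\{p\mid t\}}-1.
\]
Hence $1+\mathfrak{R}_p(|c|^2-\lambda)=p\,\one_{\{|c|^2\equiv\lambda\pmod p\}}$. Multiplying by $p^{-d}$ gives exactly the factor $p^{1-d}\one_{\{|c|^2\equiv\lambda\pmod p\}}$, which is \eqref{nec:exact-main}.

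The uniform bound follows immediately from $\sum_{r}|\mathfrak{QR}_r(\lambda)|\le C_d$ in \eqref{nec:B-bound}.

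The one point needing care is the $k=1$ identity \eqref{nec:first-local}. It must be read with the correct sign convention. In \eqref{nec:D-definition} the condition is $n\equiv -c\pmod p$, so $|n|^2\equiv|c|^2\pmod p$ and the sign is harmless. One should also check that the normalization $p^{-kd}$ with $k=1$ leaves exactly one point $n\in\Z_p^d$ in the sum. That check is just the definition, so I expect no real obstacle; the hypotheses that $p$ is odd and $p\nmid\lambda$ enter only through Lemma~\ref{lem:Raman-cancellation}.
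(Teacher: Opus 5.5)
Your proof is correct and is essentially the paper's argument: insert Lemma~\ref{lem:Raman-cancellation} into the reduced expansion of $H_{\lambda,p}$ over $q=p^kr$ so that only the $k=1$ term survives, use $1+\mathfrak{R}_p(t)=p\,\one_{\{p\mid t\}}$, and read off the uniform bound from \eqref{nec:B-bound}. One small precision: the absolute convergence that justifies the earlier regrouping of the $q$-series comes from $|G(a,\ell,q)|\lesssim_d q^{-d/2}$ summed over $a\in\Z_q^\times$, giving $\sum_q q^{1-d/2}<\infty$ for $d\ge5$. It does not come from \eqref{nec:B-bound} itself, which only controls $\mathfrak{QR}_q(\lambda)$.
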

\begin{proof}
 From \eqref{reducedH{lambda,p}} and
Lemma~\ref{lem:Raman-cancellation} that
\begin{align*}
H_{\lambda,p}(c)&=p^{-d}\sum_{\substack{q\ge1\\(p,q)=1}} \mathfrak{QR}_q(\lambda)+ \mathfrak{D}_{p}(c;\lambda)\cdot\sum_{\substack{r\geq 1\\(p,r)=1}}\mathfrak{QR}_r(\lambda)\\
&=p^{-d}\left[1+\mathfrak{R}_p(|c|^2-\lambda)\right]\sum_{\substack{r\geq 1\\(p,r)=1}}\mathfrak{QR}_r(\lambda).
\end{align*}
For a prime $p$, one has $\mathfrak{R}_p(t)=p-1$ if $p\mid t$, and $\mathfrak{R}_p(t)=-1$
otherwise. Thus, $1+\mathfrak{R}_p(t)=p\one_{\{p\mid t\}}$, which proves
\eqref{nec:exact-main}. The remaining estimate follows from \eqref{nec:B-bound}.
\end{proof}

\subsubsection{Asymptotic behavior for $\nu_{\lambda,p}$}
With (\ref{asy1}), Lemma \ref{lem-rational-error} and Proposition \ref{prop-exact-main}, we get the asymptotic behavior of sphere masses in residue classes. For the convenience, we repeat this formula in this section.
\begin{theorem}
Let $d\geq5$ and $p$ be an odd prime with $p\nmid\lambda$. If $\lambda\ge p^8$,
then, uniformly for $a\in\Z_p^d$,
\begin{align*}
 \nu_{\lambda,p}(a)
 =\rho_\lambda\cMSW p^{1-d}\one_{\{|a|^2\equiv\lambda\pmod p\}}
 \sum_{\substack{r\geq 1\\(p,r)=1}}\mathfrak{QR}_r(\lambda)+O_d(\lambda^{-1/8}).
\end{align*}
\end{theorem}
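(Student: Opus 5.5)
The plan is to combine Fourier inversion on $\Z_p^d$ with \eqref{asy1}, Lemma~\ref{lem-rational-error} and Proposition~\ref{prop-exact-main}. Since $\nu_{\lambda,p}$ is a function on the finite group $\Z_p^d$, inversion gives
\[
 \nu_{\lambda,p}(a)=p^{-d}\sum_{b\in\Z_p^d}\widehat\nu_{\lambda,p}(b)e_p(a\cdot b)
 =\rho_\lambda\, p^{-d}\sum_{b\in\Z_p^d}a_\lambda(b/p)e_p(a\cdot b),
\]
where the second equality is \eqref{asy1}. I will then split $a_\lambda(b/p)=\cMSW h_{\lambda,p}(b)+\bigl[a_\lambda(b/p)-\cMSW h_{\lambda,p}(b)\bigr]$.

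For the main term, the definition \eqref{nec:K-definition} of the kernel gives
\[
 \rho_\lambda\cMSW p^{-d}\sum_b h_{\lambda,p}(b)e_p(a\cdot b)=\rho_\lambda\cMSW H_{\lambda,p}(a).
\]
Since $p$ is odd and $p\nmid\lambda$, Proposition~\ref{prop-exact-main} evaluates $H_{\lambda,p}(a)$ exactly, and this produces the main term in \eqref{nec:modular-asymptotic} with $\mathfrak{QR}_r$ as defined. One bookkeeping point must be checked: the rearrangements behind \eqref{reducedH{lambda,p}} need the series over $q$ to converge absolutely. This follows from \eqref{nec:B-bound} together with $|G(a,\ell,q)|\lesssim q^{-d/2}$, which holds because $d\geq5$.

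For the error term, the normalized average $p^{-d}\sum_b$ of the bracket is bounded by its supremum over $b$. Lemma~\ref{lem-rational-error}, which applies because $\lambda\ge p^8$, bounds that supremum by $C_d\lambda^{-1/8}$. Multiplying by $\rho_\lambda\le R_d^{\rm norm}$ leaves $O_d(\lambda^{-1/8})$, uniformly in $a$. Nothing here loses a factor of $p^d$, because the inversion carries the normalization $p^{-d}$ while the sum has $p^d$ terms.

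The main obstacle is therefore already handled by the preceding results. The uniform comparison in Lemma~\ref{lem-rational-error} requires the separation \eqref{nec:separation} and the choice $T\simeq\lambda^{1/4}$, and the exact local evaluation requires the Hensel cancellation in Lemma~\ref{lem:Raman-cancellation}. What remains in the proof itself is only to verify the normalization of the Fourier transform in \eqref{asy1}, namely that $\widehat\nu_{\lambda,p}(b)=\rho_\lambda a_\lambda(b/p)$ with the sign convention $e_p(-a\cdot b)$, so that the inversion formula matches \eqref{nec:K-definition}.
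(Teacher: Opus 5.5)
Your proposal is correct and follows the paper's route: the paper also proves the theorem by Fourier inversion on $\Z_p^d$ via \eqref{asy1}, evaluates the main term $\rho_\lambda\cMSW H_{\lambda,p}(a)$ exactly with Proposition~\ref{prop-exact-main}, and bounds the remainder uniformly in $a$ by $\rho_\lambda$ times the supremum controlled in Lemma~\ref{lem-rational-error}. Your extra checks — absolute convergence of the $q$-series for $d\geq5$, and $\rho_\lambda\le R_d^{\rm norm}$ — are the bookkeeping the paper leaves implicit.
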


\begin{corollary}\label{cor:common-level}
Suppose that $p$ is odd, $p\nmid\lambda$, and $\lambda\ge p^{8d}$, then
\begin{equation}\label{nec:level-cardinality}
 \#\{a\in\Z_p^d:\nu_{\lambda,p}(a)>1/{4p^{d-1}}\}
\gtrsim_d {p^{d-1}}.
\end{equation}
\end{corollary}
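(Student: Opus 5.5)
The plan is to use the asymptotic formula of Theorem~\ref{thm:asym} as a two-valued description of $\nu_{\lambda,p}$, and to determine the unknown constant in front of the indicator from the mass identity \eqref{nec:nu-basic}. This avoids computing $\sum_{(r,p)=1}\mathfrak{QR}_r(\lambda)$ explicitly. Write
\[
 S=\{a\in\Z_p^d:|a|^2\equiv\lambda\pmod p\},\qquad
 \kappa=\rho_\lambda\cMSW p^{1-d}\sum_{\substack{r\geq1\\(p,r)=1}}\mathfrak{QR}_r(\lambda).
\]
Since $\lambda\ge p^{8d}\ge p^8$, Theorem~\ref{thm:asym} gives $\nu_{\lambda,p}(a)=\kappa\one_S(a)+O_d(\lambda^{-1/8})$ uniformly in $a$. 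The error term is $O_d(p^{-d})$.

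First I record the size of $S$. Since $p$ is odd and $p\nmid\lambda$, the standard Gauss-sum count of solutions of $|a|^2\equiv\lambda\pmod p$ gives $\#S=p^{d-1}+O(p^{(d-1)/2})$. In particular $\frac12p^{d-1}\le\#S\le 2p^{d-1}$, and $\#S\le p^{d-1}(1+C_dp^{-(d-2)/2})$.

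Next I pin down $\kappa$. By \eqref{nec:nu-basic}, $\nu_{\lambda,p}$ is supported in $S$ and has total mass $1$. The key point is to sum the asymptotic formula over $S$ only, not over all of $\Z_p^d$. Summing over all of $\Z_p^d$ would give an error of size $p^d\cdot p^{-d}=O(1)$, which is useless. Summing over $S$ gives
\[
 1=\sum_{a\in S}\nu_{\lambda,p}(a)=\kappa\,\#S+O_d(\#S\,\lambda^{-1/8})=\kappa\,\#S+O_d(p^{-1}).
\]
Hence
\[
 \kappa\ge\frac{1-C_d/p}{\#S}\ge p^{1-d}\bigl(1-C_d'p^{-1}\bigr).
\]
Then for every $a\in S$,
\[
 \nu_{\lambda,p}(a)\ge\kappa-C_dp^{-d}\ge p^{1-d}\bigl(1-C_d''p^{-1}\bigr).
\]
This exceeds $1/(4p^{d-1})$ as soon as $p\ge p_0(d)$. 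In that case the level set contains all of $S$, which has at least $\frac12p^{d-1}$ elements, and \eqref{nec:level-cardinality} follows.

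The remaining obstacle is the finitely many odd primes $p<p_0(d)$. For these the target bound $\gtrsim_d p^{d-1}$ is only a constant, so it suffices to exhibit one point of the level set. Because $\nu_{\lambda,p}$ is supported in $S$ with total mass $1$, some $a\in S$ satisfies
\[
 \nu_{\lambda,p}(a)\ge\frac1{\#S}\ge\frac1{2p^{d-1}}>\frac1{4p^{d-1}}.
\]
So the level set is nonempty. Since $p^{d-1}\le p_0(d)^{d-1}$, its cardinality is at least $1\gtrsim_d p^{d-1}$. Both regimes therefore rest only on the upper bound $\#S\le 2p^{d-1}$ and the lower bound $\#S\ge\frac12p^{d-1}$ for odd $p$ with $p\nmid\lambda$. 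I expect these to be the only points needing care, via the explicit Gauss-sum formula for the number of points on the quadric $|a|^2\equiv\lambda$ over $\Z_p$.
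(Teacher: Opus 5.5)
Your proof is correct, but it takes a different route from the paper's.

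\textbf{The paper's route.} The paper never bounds $\nu_{\lambda,p}$ from below on $S$. It argues in three steps.
\begin{enumerate}
\item The trivial count $\#S\le 2p^{d-1}$ (fix $d-1$ coordinates; at most two roots remain) shows that the points where $\nu_{\lambda,p}\le 1/(4p^{d-1})$ carry total mass at most $1/2$. So the level set carries mass at least $1/2$.
\item Theorem~\ref{thm:asym} is used only one-sidedly, together with \eqref{nec:B-bound}, to get $\|\nu_{\lambda,p}\|_\infty\lesssim_d p^{1-d}$.
\item Dividing the mass $1/2$ by this $L^\infty$ bound gives the claimed count.
\end{enumerate}
This is a pure mass/pigeonhole argument. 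It needs no lower bound on $\#S$, no identification of the main-term constant, and no split into large and small primes.

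\textbf{Your route.} You use the asymptotic two-sidedly and calibrate $\kappa$ through the mass identity. Summing only over $S$ is exactly what keeps the error at $O_d(p^{-1})$. This buys a stronger conclusion: for $p\ge p_0(d)$ the level set is all of $S$, so $\nu_{\lambda,p}$ is essentially uniform on $S$. The price is the case split for small primes and the external point count $\#S=p^{d-1}+O(p^{(d-1)/2})$ for the quadric.

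\textbf{The point count is dispensable.} Your own scheme works without it.
\begin{itemize}
\item The trivial bound $\#S\le 2p^{d-1}$ already gives $\kappa\ge(1-C/p)/(2p^{d-1})$. Hence $\nu_{\lambda,p}\ge\kappa-Cp^{-d}>1/(4p^{d-1})$ on $S$ once $p$ is large. This margin is precisely why the paper's threshold has the factor $1/4$.
\item The lower bound $\#S\gtrsim_d p^{d-1}$ follows from $1=\kappa\,\#S+O_d(p^{-1})$ together with $\kappa\lesssim_d p^{1-d}$. The latter comes from \eqref{nec:B-bound} and the boundedness of $\rho_\lambda$.
\end{itemize}
So the only genuine input in either approach is Theorem~\ref{thm:asym} plus the trivial count of $S$.
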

\begin{proof}
Recall that, in (\ref{nec:nu-basic}), $\{a\in\Z_p^d:|a|^2\equiv\lambda\pmod p\}$ contains the support of $\nu_{\lambda,p}$. 
Fixing the first $d-1$ coordinates in the equation defining
$\{a\in\Z_p^d:|a|^2\equiv\lambda\pmod p\}$  leaves a quadratic equation in the last coordinate,
with at most two roots. Hence, a trivial upper bound for this set is  $2p^{d-1}$ and 
\[
 \sum_{\{a:\nu_{\lambda,p}(a)\le 1/{4p^{d-1}}\}}\nu_{\lambda,p}(a)
 \leq 1/{4p^{d-1}}\cdot\#\{a\in\Z_p^d:|a|^2\equiv\lambda\pmod p\}\leq\frac12,
\]
which implies that, from (\ref{nec:nu-basic}),
$$ \frac12
 \leq1-\sum_{\{a:\nu_{\lambda,p}(a)\le 1/{4p^{d-1}}\}}\nu_{\lambda,p}(a)=\sum_{\{a:\nu_{\lambda,p}(a)>1/{4p^{d-1}}\}}\nu_{\lambda,p}(a).$$

Moreover, since $\lambda\geq p^{8d}$, then Theorem \ref{thm:asym}, together with (\ref{nec:B-bound}), give that
$ \|\nu_{\lambda,p}\|_{L^\infty(\Z^d_p)}\lesssim_d p^{1-d}$, and therefore
$$ \frac12
 \leq\sum_{\{a:\nu_{\lambda,p}(a)>1/{4p^{d-1}}\}}\nu_{\lambda,p}(a)\lesssim_d  p^{1-d}\cdot\#\{a:\nu_{\lambda,p}(a)>1/{4p^{d-1}}\}.$$
This proves \eqref{nec:level-cardinality}.
\end{proof}

\subsection{A refined inverse argument: proof of Theorem \ref{thm:klm-counterexample}}

\begin{proposition}\label{nec:prop-finite-family}
Assume \eqref{eq:necessary-endpoint}. Let $p$ be an odd prime and $I$
a finite set of indices. Suppose that $p\nmid\lambda_j$ and
$\lambda_j\ge p^{8d}$ for every $j\in I$, and that the residues
$\lambda_j\bmod p$, $j\in I$, are pairwise distinct. Then
\begin{equation}\label{nec:finite-family-bound}
 \#I\lesssim_dC_0(1+\log p).
\end{equation}
\end{proposition}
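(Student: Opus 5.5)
Set $R=\#I$ and $h(a)=\max_{j\in I}\nu_{\lambda_j,p}(a)$ on $\Z_p^d$; note $\nu_{\lambda_j,p}=A^{\Z^d_p}_{\lambda_j}\mathbf 1_{p\Z^d}$. We apply the claim \eqref{claim} from the proof of Proposition~A, which rests only on \eqref{eq:necessary-endpoint}, at the single level $t=1/(4p^{d-1})$. It gives
\[
 \#\{a\in\Z_p^d:h(a)>t\}\leq \frac{C_0}{t}\mathcal L\!\left(\frac1t\right)
 \lesssim_d C_0\,p^{d-1}(1+\log p).
\]

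For the lower bound we use disjointness of supports. By \eqref{nec:nu-basic}, $\supp\nu_{\lambda_j,p}\subseteq\{|a|^2\equiv\lambda_j\pmod p\}$. These quadric shells are pairwise disjoint because the residues $\lambda_j\bmod p$ are distinct. Hence
\[
 \{h>t\}=\bigsqcup_{j\in I}\{\nu_{\lambda_j,p}>t\}.
\]
Each $\lambda_j$ satisfies the hypotheses of Corollary~\ref{cor:common-level}: $p$ is odd, $p\nmid\lambda_j$, and $\lambda_j\ge p^{8d}$. So each piece has cardinality $\gtrsim_d p^{d-1}$. Summing over $j\in I$ gives $\#\{h>t\}\gtrsim_d R\,p^{d-1}$.

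Comparing the two bounds and cancelling $p^{d-1}$ yields $R\lesssim_d C_0(1+\log p)$, which is \eqref{nec:finite-family-bound}.

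The only delicate step is the lower bound from Corollary~\ref{cor:common-level}. It needs the uniform $\ell^\infty$ bound $\nu_{\lambda,p}\lesssim p^{1-d}$, which comes from the asymptotic formula of Theorem~\ref{thm:asym}, and this is exactly why the hypothesis $\lambda_j\ge p^{8d}$ appears. Everything else is already done. This single-level argument replaces the layer-cake integral used in Proposition~A; that integral lost an extra factor of $\log p$ because it did not know that the mass of $h$ sits at height about $p^{1-d}$.
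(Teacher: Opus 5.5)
Your proposal is correct and follows the paper's proof essentially verbatim. The paper also evaluates the claim \eqref{claim} at the single level $t=1/(4p^{d-1})$, which gives the upper bound $4C_0p^{d-1}(1+\log 4+(d-1)\log p)$, and it obtains the matching lower bound $\gtrsim_d (\#I)\,p^{d-1}$ from the disjointness of the shells $\{|a|^2\equiv\lambda_j\pmod p\}$ together with Corollary~\ref{cor:common-level}.
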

\begin{proof}
We may assume that $I$ is nonempty. For each $j\in I$, the sets $\{a\in\Z_p^d:|a|^2\equiv\lambda_j\pmod p\}$ are
pairwise disjoint, and so are the superlevel sets of
$\nu_{\lambda_j,p}$ at $1/{4p^{d-1}}$, that is $\{a\in\Z_p^d:\nu_{\lambda,p}(a)>1/{4p^{d-1}}\}$. By Corollary~\ref{cor:common-level}, we have
\[
 \#\left\{a:\max_{j\in I}\nu_{\lambda_j,p}(a)>1/{4p^{d-1}}\right\}=\sum_{j\in I}\#\{a\in\Z_p^d:\nu_{\lambda_j,p}(a)>1/{4p^{d-1}}\}
 \gtrsim_d(\#I)\cdot p^{d-1}.
\]

Moreover, as shown in (\ref{claim}), one has
\[
 \#\left\{a\in\Z^d_p:\max_{j\in I}\nu_{\lambda_j,p}(a)>1/{4p^{d-1}}\right\}
 \leq 4C_0p^{d-1}\bigl(1+\log4+(d-1)\log p\bigr).
\]
Combining these two estimates and canceling $p^{d-1}$ prove \eqref{nec:finite-family-bound}, as desired.
\end{proof}

\begin{proof}[Proof of Theorem~\ref{thm:klm-counterexample}]
It suffices to consider integers $N\ge2$, since
$\Nmu(N)=\Nmu(\lfloor N\rfloor)$ for real $N\ge2$. Fix an integer
$N\ge N_0$ and let $R=\Nmu(N)$. If $R=0$, there is nothing to prove.
Otherwise, by considering $D_N=\prod_{1\leq i<j\leq R}|\mu_j!-\mu_i!|$, we can choose an odd prime $p\in (N^4,2N^4)$ such that the integers
$\mu_1!,\ldots,\mu_R!$ are  pairwise distinct and non-zero modulo $p$. Then the corresponding
$\lambda_j$ are nonzero and pairwise distinct modulo $p$.

Define the set
$I_N=\{1\le j\le R:\mu_j\ge 64d(1+\log N)\}.
$
Since $\mu_j$ are distinct integers, then 
\begin{align}\label{except}
    R-\#I_N\leq64d(1+\log N).
\end{align}
For every positive
integer $m$, one has $m!\ge2^{m-1}$, and thus $\log\lambda_j\geq(64d(1+\log N)-1)\log2
 \geq8d\log p$ for all $j\in I_N$.
Here, we use the range $p<2N^4$.
As a result, $\lambda_j\ge p^{8d}$ for every $j\in I_N$.

Proposition~\ref{nec:prop-finite-family} gives $ \#I_N\lesssim_dC_0(1+\log p)
 \lesssim_d C_0(1+\log N).$
 Combining it with
(\ref{except}), we obtain that, for $N\geq N_0$,
\[
 \Nmu(N)=R\le C_d(1+C_0)(1+\log N).
\]
The proof of Theorem \ref{thm:klm-counterexample} is complete.
\end{proof}

\medskip

\noindent {\bf Acknowledgements:} S. Lee is supported by the National Research Foundation of Korea (NRF) through Grant No. RS-2024-00342160.  J. Li is supported by ARC DP260100485.  C.W. Liang and C.Y. Shen are supported by NSTC through grant 111-2115-M-002-010-MY5. A part of this work was done while
the third author was a student at Macquarie University, and he would like to thank the university for support and a very comfortable environment. The third author would like to thank HaoYun Yao for a helpful conversation.

\end{document}